\title[Invariant Gibbs measures for the quintic NLS in infinite volume]{Invariant Gibbs measures for the one-dimensional quintic nonlinear Schrödinger equation in infinite volume}
\author{Bjoern Bringmann}
\address{\noindent Bjoern Bringmann, Department of Mathematics, Princeton University, Princeton, NJ 08544}
\email{bringmann@princeton.edu}
 \author{Gigliola Staffilani}
\address{\noindent\begin{minipage}[t]{\textwidth}
Gigliola Staffilani, 
Department of Mathematics, 
Massachusetts Institute of Technology, Cambridge, MA 02139
\end{minipage}}
\email{gigliola@math.mit.edu}
\date{\today}
\begin{document}

\begin{abstract}
We prove the invariance of the Gibbs measure for the defocusing quintic nonlinear Schr\"odinger equation on the real line. This builds on earlier work by Bourgain, who treated the  cubic nonlinearity. The key new ingredient is a growth estimate for the infinite-volume \mbox{$\Phi^{p+1}_1$-measures}, which is proven via the stochastic quantization method. 
\end{abstract}

\maketitle

\tableofcontents

\section{Introduction}

\noindent 
Over the last three decades, there has been tremendous interest in the construction and dynamics of Gibbs measures for defocusing nonlinear Schrödinger equations, which can be written as 
\begin{equation}\label{intro:eq-NLS-general}
i \partial_t u + \Delta u = |u|^{p-1}u. 
\end{equation}
In \eqref{intro:eq-NLS-general}, $p>1$ is a parameter, which is often chosen as an odd integer. The construction and dynamics of Gibbs measures for \eqref{intro:eq-NLS-general} differ in the periodic and infinite-volume setting, which correspond to the spatial domains $\T^d:=(\R / (2\pi \Z))^d$ and $\R^d$, respectively. The construction of Gibbs measures for \eqref{intro:eq-NLS-general}, which are called $\Phi^{p+1}_d$-measures, is nowadays understood in both the periodic and infinite-volume setting. The $\Phi^4_3$-measure, which is the most prominent member of this family, was first constructed in the periodic setting by Glimm and Jaffe \cite{GJ73} and later in the infinite-volume setting by Feldman and Osterwalder \cite{FO76}. For a more detailed discussion of $\Phi^{p+1}_d$-measures, we refer the reader to the introduction of \cite{GH21} and the references therein.

The dynamics of \eqref{intro:eq-NLS-general} with initial data drawn from the Gibbs measure were first studied in seminal works of Bourgain. In the periodic setting, Bourgain \cite{B94,B96} proved the invariance of the Gibbs measure under \eqref{intro:eq-NLS-general} for $d=1$ and $(d,p)=(2,3)$. The higher-order nonlinearities $p\geq 5$ in dimension $d=2$ were treated in more recent work of Deng, Nahmod, and Yue \cite{DNY24}. For a more detailed overview of invariant Gibbs measures for nonlinear dispersive equations in the periodic setting, we refer the reader to the introduction of \cite{BDNY24} and the references therein. In the infinite-volume setting, the dynamics of \eqref{intro:eq-NLS-general} with initial data drawn from the Gibbs measure are much less understood. The reason is that the initial data drawn from the Gibbs measure has no decay in space and, in fact, exhibits logarithmic growth (see Theorem~\ref{intro:thm-measure} below). Due to infinite speed of propagation, the growth of the initial data makes it very challenging to control the dynamics of \eqref{intro:eq-NLS-general}. So far, the only\footnote{To be more precise, this is the only result on the almost-sure convergence of $2\pi L$-periodic solutions of \eqref{intro:eq-NLS-general} with initial data drawn from Gibbs measures as $L\rightarrow \infty$. For results on the convergence in law of $2\pi L$-periodic solutions, which can be obtained using compactness arguments, see Subsection \ref{section:introduction-comments}.} available result has been obtained by Bourgain \cite{B00}, who proved the invariance of the Gibbs measure for $(d,p)=(1,3)$. The goal of this article is to extend Bourgain's result to the case $(d,p)=(1,5)$, i.e., to treat the quintic (rather than cubic) nonlinearity.

\subsection{Main results}\label{section:introduction-results}

For the rest of the article, we focus on the one-dimensional, defocusing nonlinear Schrödinger equations
\begin{equation}\label{intro:eq-NLS}
\begin{cases}
\begin{aligned}
i \partial_t u + \Delta u &= |u|^{p-1} u \qquad \qquad (t,x) \in \R \times \R, \\
u(0)&=\phi.
\end{aligned}
\end{cases}
\end{equation}
To treat the infinite-volume setting, we first consider $2\pi L$-periodic initial data, where $L\in\dyadic$, and then take the limit as $L\rightarrow\infty$. To make this more precise, we let $\T_L:= \R /(2\pi L \Z)$. We then introduce the $2\pi L$-periodic massive Gaussian free field $\cg_L$ and Gibbs measure $\mu_L$, which can be rigorously defined as 
\begin{equation}\label{intro:eq-GFF-rigorous}
\cg_L = \operatorname{Law}\Big( \sum_{n\in \Z_L} \frac{g_n}{\langle n \rangle} e^{inx} \Big) 
\end{equation}
where $\Z_L:= L^{-1} \Z$ and $(g_n)_{n\in \Z_L}$ are independent, standard complex-valued Gaussians, and 
\begin{equation}\label{intro:eq-Gibbs-rigorous}
\frac{\mathrm{d}\mu_L}{\mathrm{d}\cg_L}(\phi)
= \mathcal{Z}_L^{-1} \exp\Big( -  \tfrac{1}{p+1} \int_{\T_L} |\phi|^{p+1} \dx \Big). 
\end{equation}
The Gibbs measures $\mu_L$ have a unique weak limit $\mu$ as $L\rightarrow \infty$, which is simply called the infinite-volume limit. For details regarding this weak limit, see Lemma \ref{measure:lem-Wasserstein} below. 

\begin{theorem}[Dynamics]\label{intro:thm-dynamics}
Let $3 \leq p\leq 5$ and let $(\Omega,\mathcal{F},\bP)$ be a probability space. Furthermore, let $\phi_L,\phi \colon (\Omega \times \R, \mathcal{F}\times \mathcal{B}(\R))\rightarrow \C$, where $L\in \dyadic$ and $\mathcal{B}(\R)$ is the Borel $\sigma$-Algebra, be random continuous functions satisfying the following properties:
\begin{enumerate}[label=(\roman*)]
    \item\label{intro:item-distribution} (Distribution) For all $L\in \dyadic$, we have that $\operatorname{Law}_\bP(\phi_L)=\mu_L$ and $\Law_\bP(\phi)=\mu$.
    \item\label{intro:item-coupling} (Coupling) There exist constants $C\geq 1$ and  $\eta>0$ such that, for all $L\in \dyadic$,
    \begin{equation}\label{intro:eq-coupling}
    \bP \Big( \big\| \phi - \phi_L \big\|_{C^0_x([-L^\eta,L^\eta])} > L^{-\eta} \Big) \leq C L^{-\eta}.
    \end{equation}
\end{enumerate}
Finally, for all $L\in \dyadic$, let $u_L$ be the unique global solution of \eqref{intro:eq-NLS} with initial data $u_L(0)=\phi_L$. Then, the sequence $u_L$ has a $\bP$-a.s. limit $u$ in $C_t^0 C_x^\alpha([-T,T]\times I)$ for all $0\leq \alpha<\frac{1}{2}$, all $T\geq 1$, and all compact intervals $I\subseteq \R$. In fact, there exist constants $C^\prime \geq 1$ and $\eta^\prime>0$, depending only on $\alpha,C,\eta$, and $T$, such that the estimate 
\begin{equation}\label{intro:eq-as-convergence-estimate}
\bP \Big( \big\| u - u_L \big\|_{C_t^0 C_x^\alpha([-T,T]\times [-L^{\eta^\prime},L^{\eta^\prime}])}
> L^{-\eta^\prime} \Big) \leq C L^{-\eta^\prime}
\end{equation}
is satisfied for all $L\in \dyadic$. 
Furthermore, $u$ solves \eqref{intro:eq-NLS-general} in the sense of space-time distributions and leaves the Gibbs measure~$\mu$ invariant, i.e., it holds that $\Law_\bP(u(t))=\mu$ for all $t\in \R$.
\end{theorem}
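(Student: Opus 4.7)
The plan is to reduce the infinite-volume dynamics to the periodic case by a local-in-space stability argument, exploiting the invariance of each $\mu_L$ under the $\T_L$-periodic flow.

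First, for each $L \in \dyadic$ we show that $\mu_L$ is invariant under the $\T_L$-periodic flow of \eqref{intro:eq-NLS}. This follows Bourgain's classical scheme: truncate in frequency to an ODE, apply Liouville's theorem together with conservation of $L^2$-mass and energy to see that the truncated Gibbs density is preserved, and pass to the limit via global well-posedness on $\T_L$ with data drawn from $\mu_L$. The low-regularity theory required for $p=5$ is more delicate than for $p=3$ but is available in the literature. As a byproduct, $\operatorname{Law}_\bP(u_L(t)) = \mu_L$ for every $t \in \R$ and every $L$.

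The heart of the argument is a local-in-space stability theorem: if $\phi_L^{(1)}, \phi_L^{(2)} \sim \mu_L$ agree on an enlarged interval $[-R-D,R+D]$, with $D = D(R,T)$ polynomial in $R$ and $T$, up to an error $\delta$, then with high probability the corresponding solutions satisfy an estimate of the shape $\| u_L^{(1)} - u_L^{(2)} \|_{C^0_t C^\alpha_x([-T,T] \times [-R,R])} \leq \delta^{1/2}$. Since \eqref{intro:eq-NLS} has infinite speed of propagation, no sharp light-cone is available; instead one uses the dispersive character of the linear Schrödinger propagator together with Strichartz estimates localized to bounded spatial intervals, bootstrapped on short time steps so that a local iteration closes. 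The decisive input here is the new growth estimate for $\mu$ and, uniformly in $L$, for $\mu_L$, which asserts that on an event of high probability $\| \phi_L \|_{C^0_x([-R^\prime,R^\prime])}$ grows at most polylogarithmically in $R^\prime$. This bound controls the nonlinear source terms produced by distant initial data and makes the stability estimate viable for $p=5$.

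Applying this stability estimate along a dyadic sequence $L_1 < L_2 < \ldots$ and invoking the coupling hypothesis \ref{intro:item-coupling}, the sequence $\{u_{L_j}\}$ is Cauchy in $C^0_t C^\alpha_x$ on compact space-time sets with probability at least $1 - L_j^{-\eta^\prime}$, yielding the a.s. limit $u$ and the estimate \eqref{intro:eq-as-convergence-estimate}. That $u$ solves \eqref{intro:eq-NLS-general} in the sense of space-time distributions follows by passing to the limit in the equation for $u_L$. For the invariance of $\mu$: for fixed $t \in \R$ we have $\operatorname{Law}_\bP(u_L(t)) = \mu_L$ from the first step, $u_L(t) \to u(t)$ in probability on compact intervals from the third, and $\mu_L \to \mu$ weakly by Lemma \ref{measure:lem-Wasserstein}; combining these gives $\operatorname{Law}_\bP(u(t)) = \mu$. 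The main obstacle is the local-in-space stability: the combination of infinite speed of propagation, the spatial non-decay of Gibbs samples, and the quintic nonlinearity, which sits near the threshold of what low-regularity dispersive methods on $\R$ can handle, makes this step delicate, and it is precisely the growth estimate for the infinite-volume $\Phi^{p+1}_1$-measure, obtained via stochastic quantization, that allows the argument to close.
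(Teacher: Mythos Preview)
Your high-level architecture is correct and matches the paper: periodic invariance of $\mu_L$, a local-in-space stability estimate for differences of solutions, a telescoping argument along dyadic $L$, and passage to the limit for both the equation and the invariance. The final paragraph on invariance is essentially verbatim what the paper does.

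The divergence is in the stability step. You propose to close it via ``Strichartz estimates localized to bounded spatial intervals, bootstrapped on short time steps.'' The paper does \emph{not} use any Strichartz or dispersive machinery; in fact it explicitly avoids kernel estimates for $P_{\leq N}e^{it\Delta}$ (see Remark~\ref{uniform:rem-kernel-estimates}). Instead it controls the exponentially weighted, frequency-truncated mass
\[
M_R(w(t)) = \int_\R |P_{\leq R} w|^2 e^{-\langle x/R\rangle}\,dx,
\]
computes its time derivative directly from the equation, and obtains a Gronwall inequality of the form
\[
\tfrac{d}{dt} M_R(w(t)) \lesssim \|u\|_{L^\infty_{t,x}}^{p-1} M_R(w(t)) + R^{-1+\varepsilon}.
\]
The role of the growth estimate (Theorem~\ref{intro:thm-measure}) is then completely quantitative: it gives $\|u\|_{L^\infty_{t,x}([-1,1]\times[-R^2,R^2])} \lesssim \log(R)^{2/(p+3)}$, so the Gronwall exponent is $\log(R)^{2(p-1)/(p+3)}$, and the condition $2(p-1)/(p+3)\leq 1$ is \emph{exactly} $p\leq 5$. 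Your proposal mentions the polylogarithmic growth estimate but does not isolate this numerology, and a Strichartz-based local iteration would not obviously produce the same sharp threshold; the Gronwall rate coming from the $L^\infty$ bound is the entire reason the argument extends from $p\leq 3$ to $p\leq 5$.

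A second point you leave implicit is the iteration in the spatial scale $R$ (Lemma~\ref{difference:lem-iterated}): since the Gronwall estimate only controls $M_R$ on a short time interval, one must step forward in time while simultaneously shrinking $R$ geometrically ($R_{j-1}=R_j^2$), trading spatial localization for temporal reach. This is a concrete mechanism that replaces your phrase ``bootstrapped on short time steps'' and should be made explicit.
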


 As already mentioned above, Theorem \ref{intro:thm-dynamics} is the extension of the main result of \cite{B00} from\footnote{In \cite{B00}, the condition is stated as $p\leq 4$, but this is due to a difference in notation. In \eqref{intro:eq-NLS}, the exponent of the nonlinearity is denoted by $p$, whereas in \cite[(0.1)]{B00}, the exponent is denoted by $p-1$.} $p=3$ to $3\leq p\leq 5$. The exponents $1<p<3$ in Theorem \ref{intro:thm-dynamics} were excluded to avoid technical difficulties related to the regularity of the function $z\mapsto |z|^{p-1}z$, but can be treated using minor modifications of the arguments below. 

 In addition to extending the result of \cite{B00}, we also simplify a technical aspect of the argument in \cite{B00}. To be more specific, we do not use any estimates of the kernel of $P_{\leq N} e^{it\Delta}$, where $P_{\leq N}$ is a Littlewood-Paley operator. For more details, see the proof of Proposition \ref{uniform:prop-hoelder} and Remark~\ref{uniform:rem-kernel-estimates}.

\begin{remark}[Couplings]
We note that, unlike in \cite{B00}, both the assumption \eqref{intro:eq-coupling} and the conclusion \eqref{intro:eq-as-convergence-estimate} in Theorem \ref{intro:thm-dynamics} are quantitative. The assumption \eqref{intro:eq-coupling} can be satisfied by choosing the random initial data as a suitable coupling of the Gibbs measures (see Proposition \ref{measure:prop-coupling}).
This coupling is constructed using the stochastic quantization method and a quantitative version of the Skorokhod representation theorem. We emphasize that the Skorokhod representation theorem is only used in our construction of a coupling satisfying the properties in \ref{intro:item-distribution} and \ref{intro:item-coupling}, but is not used in the proof of Theorem \ref{intro:thm-dynamics} itself. An even stronger coupling of the Gibbs measures than in \eqref{intro:eq-coupling} was previously constructed using different methods than in this article in \cite[Proposition 2.7]{FKV24}. 

We also note that if the quantitative assumption \eqref{intro:eq-coupling} is replaced by the qualitative assumption that the limit of $\phi_L$ exists $\bP$-a.s. in $C_x^0(I)$ for all compact $I\subseteq \R$, then a minor modification of our argument yields the $\bP$-a.s. convergence of a subsequence $u_{L_j}$ of $u_L$ in $C_t^0 C_x^\alpha([-T,T]\times I)$ for all $0\leq \alpha <\frac{1}{2}$, all $T\geq 1$, and all compact intervals $I\subseteq \R$. For more details, see Remark \ref{proof:rem-quantitative-assumption} below. 
\end{remark}

 We now briefly describe the main idea behind the proof of Theorem \ref{intro:thm-dynamics}. As in~\cite{B00}, we consider the difference $w:= u_L - u_{L/2}$ and control the local, frequency-truncated mass
\begin{equation}\label{intro:eq-MR}
M_R(w(t)) := \int_\R \big| P_{\leq R} w \big|^2 e^{-\langle \frac{x}{R} \rangle} \dx,
\end{equation}
where $R\geq 1$. With high probability, the derivative of $M_R(w(t))$ for $t\in [-1,1]$ can be bounded by
\begin{equation}\label{intro:eq-MR-derivative}
\frac{\mathrm{d}}{\mathrm{d}t} M_R(w(t)) 
\lesssim \Big( \max_{u=u_{L/2},u_L} \| u \|_{L_t^\infty L_x^\infty([-1,1]\times [-R^2,R^2])}^{p-1} \Big) \, M_R(w(t)) + R^{-1+\varepsilon},
\end{equation}
where $\varepsilon>0$ is a small parameter. For the details behind \eqref{intro:eq-MR-derivative}, we refer the reader to Proposition~\ref{difference:prop-main} and its proof. In \cite{B00}, Bourgain used an estimate of Brascamp-Lieb (see Lemma \ref{measure:lem-brascamp-lieb}) to control the $2\pi L$-periodic Gibbs measures using the $2\pi L$-periodic Gaussian free fields. Using standard estimates for the Gaussian free fields, one then obtains that 
\begin{equation}\label{intro:eq-Linfty-GFF}
\max_{u=u_{L/2},u_L} \| u \|_{L_t^\infty L_x^\infty([-1,1]\times [-R^2,R^2])}\lesssim \log(R)^{\frac{1}{2}}
\end{equation}
with high probability. By combining \eqref{intro:eq-MR-derivative}, \eqref{intro:eq-Linfty-GFF}, and Gronwall's inequality, it then follows that
\begin{equation}\label{intro:eq-MR-bound}
M_R(w(t)) \lesssim \exp\Big( C \log(R)^{\frac{p-1}{2}} |t| \Big) \big( M_R(w(0)) + R^{-1+\varepsilon}\big).
\end{equation}
Provided that $R$ is much smaller than a small power of $L$, it follows from our assumption in Theorem~\ref{intro:thm-dynamics}.\ref{intro:item-coupling} that $M_R(w(0))$ is small. In order to keep $M_R(w(t))$ small over a time-interval $[-\tau,\tau]$, where $\tau>0$ is a small constant, one then needs that $(p-1)/2\leq 1$, i.e., $p\leq 3$. To improve the condition on $p$, we rely on the fact that the tails of the $\Phi^{p+1}_1$-measures decay faster than the tail of the Gaussian free field. Instead of~\eqref{intro:eq-Linfty-GFF}, this allows us to prove that 
\begin{equation}\label{intro:eq-Linfty-Gibbs}
\max_{u=u_{L/2},u_L} \| u \|_{L_t^\infty L_x^\infty([-1,1]\times [-R^2,R^2])}\lesssim \log(R)^{\frac{2}{p+3}}
\end{equation}
with high probability. By combining \eqref{intro:eq-MR-derivative}, \eqref{intro:eq-Linfty-Gibbs}, and Gronwall's inequality, we then obtain the improved estimate 
\begin{equation}\label{intro:eq-MR-bound-improved}
M_R(w(t)) \lesssim \exp\Big( C \log(R)^{\frac{2(p-1)}{p+3}} |t| \Big) \big( M_R(w(0)) + R^{-1+\varepsilon}\big).
\end{equation}
To keep $M_R(w(t))$ small, one then only needs that $2(p-1)/(p+3)\leq 1$, i.e., $p\leq 5$.\\

As discussed above, growth estimates for the $\Phi^{p+1}_1$-measures play an essential role in the proof of Theorem \ref{intro:thm-dynamics}, and they are the subject of our next theorem.  

\begin{theorem}[Measures]\label{intro:thm-measure}
Let $p>1$ and let $C=C_p$ and $c=c_p$ be sufficiently large and small constants depending only on $p$, respectively. For all $R\geq 10$ and $\lambda \geq 1$, it then holds that 
\begin{equation}\label{measure:eq-Linfty}
\sup_{L\geq 10} \mu_L \Big( \Big\{ \big\| \phi \big\|_{L^\infty([-R,R])} \geq C \big( \log(R) + \lambda \big)^{\frac{2}{p+3}} \Big\} \Big) \leq C \exp\big( - c \lambda \big). 
\end{equation}
\end{theorem}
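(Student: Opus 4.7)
The plan is to reduce the statement to a pointwise tail bound and then to prove the latter via stochastic quantization. A sample $\phi \sim \mu_L$ is absolutely continuous with respect to $\cg_L$, so it inherits H\"older regularity from the Gaussian free field: with probability at least $1 - Ce^{-c\lambda}$, one has $[\phi]_{C^\alpha([-R,R])} \lesssim (\log R + \lambda)^{1/2}$ for any $\alpha < 1/2$. It therefore suffices to bound $|\phi(x)|$ at $O(R^{1+\varepsilon})$ evenly spaced lattice points of $[-R,R]$ and union-bound; the extra factor $R^{\varepsilon}$ contributes only a bounded constant times $\log R$ inside the exponent.

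The heart of the argument is the pointwise tail
\[
\sup_{L \geq 10} \mu_L\bigl(|\phi(x_0)| \geq M\bigr) \leq C\exp\bigl(-c\, M^{(p+3)/2}\bigr),
\]
which improves on the Gaussian tail $e^{-cM^2}$ associated with $\cg_L$. I would establish this via the Bou\'e-Dupuis variational formula applied to the log-MGF of $\phi(x_0)$ under $\mu_L$. With $V(\phi) = \frac{1}{p+1}\int_{\T_L}|\phi|^{p+1}$ and $Z$ a stochastic-integral representation of $\cg_L$, Bou\'e-Dupuis applied to the tilted and untilted normalizations yields
\[
\log \bE_{\mu_L}\bigl[e^{\theta\phi(x_0)}\bigr] \leq \theta\, \bE\bigl[(Z + I(w_\ast))(x_0)\bigr],
\]
where $w_\ast$ minimizes $\bE\bigl[V(Z + I(w)) - \theta(Z + I(w))(x_0) + \tfrac{1}{2}\|w\|^2\bigr]$ over adapted drifts. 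The deterministic variational problem underlying this bound balances a quadratic Gaussian cost $\sim M^2(\ell + \ell^{-1})$ against a Gibbs cost $\sim M^{p+1}\ell$ with linear reward $-\theta M$; its optimizer is a bump at $x_0$ of width $\ell \sim M^{-(p-1)/2}$ and height $M \sim \theta^{2/(p+1)}$, yielding $\log \bE_{\mu_L}[e^{\theta\phi(x_0)}] \lesssim \theta^{(p+3)/(p+1)}$. Chernoff with $\theta = c\, M^{(p+1)/2}$ then produces the pointwise tail, and the same argument applied to $-\phi(x_0)$ handles the absolute value.

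The main obstacle will be making this argument rigorous uniformly in $L$. The extensive (linear in $L$) parts of the two normalizations $-\log\int e^{-V}\, d\cg_L$ and $-\log\int e^{-V + \theta\phi(x_0)}\, d\cg_L$ must cancel in their difference, which is natural because the optimal profile is localized on a scale $\ell \ll 1 \ll L$ and so the $2\pi L$-periodic Green's function agrees with its $\R$ analogue up to exponentially small corrections. In practice one avoids solving the variational problem exactly by exhibiting a single compactly supported bump drift $w_0$ as a competitor and verifying that the corresponding Bou\'e-Dupuis functional is bounded by $C\theta^{(p+3)/(p+1)}$ uniformly in $L$. Once the pointwise tail is in hand, one chooses $M = C(\log R + \lambda)^{2/(p+3)}$ with $C$ sufficiently large that $-c M^{(p+3)/2} \leq -(2+\varepsilon)(\log R + \lambda)$: the union bound over $R^{1+\varepsilon}$ points then leaves $R^{-1} e^{-c\lambda}$, and the uniformity in $L$ from the pointwise estimate yields the stated conclusion.
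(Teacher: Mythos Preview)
Your approach is genuinely different from the paper's. The paper does not aim for a direct pointwise tail; instead it splits $\phi = P_{<N}\phi + P_{\geq N}\phi$, uses the Hairer--Steele argument (parabolic stochastic quantization plus a maximum principle) to control $\E_{\mu_L}\bigl[\exp(\beta\|\phi\|_{L^{p+1}([-1,1])}^{p+1})\bigr]$ uniformly in $L$, combines this with Bernstein to bound the low-frequency piece, uses Brascamp--Lieb to bound $P_{\geq N}\phi$ by the Gaussian free field, and finally optimizes in $N$ to produce the exponent $\tfrac{2}{p+3}$. Your route --- a single pointwise tail $\mu_L(|\phi(x_0)|\ge M)\le Ce^{-cM^{(p+3)/2}}$ via Bou\'e--Dupuis, then a lattice-plus-H\"older union bound --- would avoid the frequency decomposition and Brascamp--Lieb entirely, and the target exponent $\tfrac{p+3}{2}$ is indeed the correct one (it matches the one-dimensional Gagliardo--Nirenberg scaling $\|\phi\|_{L^\infty}^{(p+3)/2}\lesssim \|\phi\|_{L^{p+1}}^{p+1}+\|\phi\|_{H^1}^2$).

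Your displayed inequality $\log\E_{\mu_L}[e^{\theta\phi(x_0)}]\le \theta\,\E[(Z+I(w_\ast))(x_0)]$ is correct: it follows by using the tilted optimizer $w_\ast$ as a (suboptimal) choice in the untilted variational problem, so that the $V$-terms and $\tfrac12\|w\|^2$-terms cancel. The gap is in the next step. To conclude $\E[I(w_\ast)(x_0)]\lesssim \theta^{2/(p+1)}$ you need an \emph{a~priori upper bound on the optimizer}, and exhibiting a bump drift as a competitor does not give this --- a competitor only bounds the infimum value of the functional, not the size of the optimizer at a point. What is actually required is a coercivity argument: from the optimality of $w_\ast$ one extracts $\theta\,\E[I(w_\ast)(x_0)]\ge \E[V(Z+I(w_\ast))-V(Z)]+\tfrac12\E\|w_\ast\|^2$, and then one must show that the right-hand side dominates $c\,\E[|I(w_\ast)(x_0)|^{(p+3)/2}]$ minus lower-order terms. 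The deterministic kernel of this is exactly the Gagliardo--Nirenberg inequality above, but the stochastic cross-terms with $Z$ and, more seriously, the extensive piece $\E[V(Z)]\sim L$ that appears when you compare to $w=0$, have to be handled. You correctly flag the extensive cancellation as the main obstacle, but the proposed fix (a competitor) does not address it; one would need either to localize the potential to a neighborhood of $x_0$ a~priori, or to analyze the Euler--Lagrange equation for $w_\ast$ to show its spatial localization, neither of which is sketched. The paper's Hairer--Steele route sidesteps exactly this difficulty by tilting with a \emph{local} observable $\|\phi\|_{L^{p+1}([-1,1])}^{p+1}$, for which the modified Langevin dynamics admits a maximum-principle bound that is manifestly uniform in $L$ and in the initial data.
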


Since  \eqref{measure:eq-Linfty} concerns a Gibbs measure in only one spatial dimension, it can be obtained using classical methods based on SDEs/Feynman-Kac formulas (see e.g. \cite[Section 2]{B02}). For a recent proof of \eqref{measure:eq-Linfty} using such methods (for $p=3$), we refer the reader to \cite[Proposition 2.2]{FKV24}. In this article, we take a different approach towards \eqref{measure:eq-Linfty}, and instead prove it using stochastic quantization. In particular, we rely on an elegant argument of Hairer and Steele \cite{HS22}, which was originally developed for the $\Phi^4_3$-measure. Our motivation for this is that, in addition to proving Theorem~\ref{intro:thm-measure}, we would like to illustrate the Hairer-Steele method in a setting which is technically much simpler than the $\Phi^4_3$-model.

\begin{remark} In the proof of Theorem \ref{intro:thm-dynamics}, it is essential that the left-hand side of \eqref{measure:eq-Linfty} contains $\log(R)+\lambda$ rather than $\lambda \log(R)$. Due to this, the choice $\lambda \sim \log(R)$ gains us powers of $R$ on the right-hand side of \eqref{measure:eq-Linfty} without increasing the power of $\log(R)$ on the left-hand side of  \eqref{measure:eq-Linfty}. This gain of powers of $R$ on the right-hand side of \eqref{measure:eq-Linfty} will later allow us to sum probabilities over different dyadic scales.
\end{remark}

\subsection{Further comments}\label{section:introduction-comments}

We conclude this introduction with several additional comments. First, we mention that invariant measures have recently been constructed for many completely integrable nonlinear dispersive equations on the real line. In the breakthrough article \cite{KMV20}, Killip, Murphy, and Visan proved the invariance of white noise under the KdV equation on the real line. More recently, Forlano, Killip, and Visan \cite{FKV24} proved the invariance of the Gibbs measures under the mKdV equation on the real line. Using the Miura transformation, the authors also constructed new invariant measures for the KdV equation. We note that, in addition to several novel ingredients, the two articles \cite{FKV24,KMV20} rely on the method of commuting flows from \cite{KV19}. Due to this, the proofs in \cite{FKV24,KMV20} can bypass\footnote{To be more precise, Gronwall estimates are used in \cite[Section 5]{FKV24} and \cite[Section 6]{KMV20} to control the $\mathcal{H}_\kappa$-flows. However, since the $\mathcal{H}_\kappa$-flows and the KdV/mKdV flows in \cite{FKV24,KMV20} commute, the Gronwall estimates are not needed for the KdV/mKdV flows themselves.} Gronwall-estimates such as \eqref{intro:eq-MR-derivative}-\eqref{intro:eq-MR-bound-improved}.

Second, we note that invariant Gibbs measures of \eqref{intro:eq-NLS-general} in infinite volume have also been studied using weak methods in \cite{BL22,CdS20}. The weak methods can be applied to a larger class of nonlinear dispersive equations but, unlike Theorem \ref{intro:thm-dynamics}, only lead to the convergence in law (rather than almost-sure convergence) of a subsequence of the periodic solutions.

Third, we mention that \eqref{intro:eq-NLS} has also been studied for slowly-decaying and non-decaying deterministic initial data, see e.g. \cite{CHKP20,DSS20,DSS21,H23,S22} and the references therein. In particular, the local well-posedness of \eqref{intro:eq-NLS} has been shown in the modulation space $M^{s}_{\infty,1}$ for $s\geq 0$, which contains non-decaying initial data \cite{CHKP20}. However, to the best of our knowledge, there is no deterministic result for \eqref{intro:eq-NLS} with initial data exhibiting logarithmic growth (as in Theorem \ref{intro:thm-dynamics}).\\

\noindent 
\textbf{Acknowledgements:} The authors thank Van Duong Dinh, Tom Spencer, and Nikolay Tzvetkov for helpful comments and discussions. During parts of this work, B.B. was supported by the NSF under Grant No. DMS-1926686 and G.S. was supported by the NSF under Grant No. DMS-2306378 and by the Simons Foundation Collaboration
Grant on Wave  Turbulence. G.S. would also like to thank the Department of Mathematics at Princeton University for the generous hospitality during the completion of this work via a Minerva Fellowship.

\section{Preliminaries}\label{section:preliminaries}

In this section, we recall basic definitions and estimates from harmonic analysis (Subsection \ref{section:prelim-harmonic}) and probability theory (Subsection \ref{section:prelim-prob}). We encourage the expert reader to skip to Section~\ref{section:measure} and to return to this section periodically whenever its estimates are needed.

\subsection{Harmonic Analysis}\label{section:prelim-harmonic}

For any interval $I\subseteq \R$, parameter $1\leq p <\infty$, and any $\phi \colon I \rightarrow \C$, we define the $L^p$-norm and $L^\infty$-norm by 
\begin{equation}\label{prelim:eq-Lp}
\big\| \phi \big\|_{L^p(I)}:=  \Big( \int_I |\phi(x)|^p \dx \Big)^{\frac{1}{p}} \qquad \text{and} \qquad \big\| \phi \big\|_{L^\infty(I)}:= \operatorname*{ess\, sup}_{x\in I} |\phi(x)|,
\end{equation}
We also define a local variant of the $L^p$-norm, which is defined as 
\begin{equation}\label{prelim:eq-Lploc}
\| \phi \|_{L^p_{\loc}(I)} := \sup_{x_0 \in \R} \| \phi \|_{L^{p}(I \, \scalebox{0.7}{$\medcap$} \, [x_0-1,x_0+1])}.
\end{equation}
For continuous functions $\phi\colon I \rightarrow \C$, we also write $\| \phi\|_{C^0(I)}$ instead of $\| \phi\|_{L^\infty(I)}$. Furthermore, for any $\alpha\in (0,1]$, we define the Hölder-norm
\begin{equation*}
\| \phi \|_{C^\alpha(I)}:= \| \phi \|_{C^0(I)}+\max_{\substack{x,y\in I\colon \\ 0<|x-y|\leq 1}} \frac{|\phi(x)-\phi(y)|}{|x-y|^\alpha}.
\end{equation*}
For a Schwartz function $\phi \colon \R \rightarrow \C$, we define its Fourier transform by 
\begin{equation*}
\widehat{\phi}(\xi) = \frac{1}{\sqrt{2\pi}} \int_\R \phi(x) e^{-i\xi x} \dx. 
\end{equation*}
We let $\rho\colon \R \rightarrow [0,1]$ be a smooth function satisfying $\rho(\xi)=1$ for all $\xi\in[-1,1]$ and $\rho(\xi)=0$ for all $\xi \not \in [-\frac{9}{8},\frac{9}{8}]$. We define $(\rho_{\leq N})_{N\in \dyadic}$ and $(\rho_N)_{N\in \dyadic}$ by 
\begin{equation}\label{prelim:eq-rho}
\rho_{\leq N}(\xi) := \rho\big( \tfrac{\xi}{N} \big), \quad 
\rho_1(\xi):=\rho_{\leq 1}(\xi), \quad \text{and} \quad 
\rho_N(\xi):= \rho_{\leq N}(\xi)-\rho_{\leq N/2}(\xi) ~\text{ for all } N\geq 2.
\end{equation}
Finally, we define the Littlewood-Paley operators $(P_{\leq N})_{N\in\dyadic}$ and $(P_N)_{N\in \dyadic}$ by 
\begin{equation}\label{prelim:eq-rho-kernel}
P_{\leq N} \phi = \widecheck{\rho}_{\leq N} \ast \phi \qquad \text{and} \qquad P_N \phi = \widecheck{\rho}_N \ast \phi,
\end{equation}
where $\widecheck{\rho}_{\leq N}$ and $\widecheck{\rho}_{N}$ are the inverse Fourier-transforms of $\rho_{\leq N}$ and $\rho_{N}$ and $\ast$ denotes the convolution. 
Equipped with the local $L^p$-norms and Littlewood-Paley operators, we can now state and prove several basic estimates from harmonic analysis. 

\begin{lemma}[Local Bernstein-estimate]\label{prelim:lem-bernstein}
Let $1\leq p \leq  q \leq \infty$,  $R\geq 10$, and $N\geq 1$. Then, it holds for all $\phi\colon \R \rightarrow \C$ and all $D\geq 1$ that 
\begin{equation}\label{prelim:eq-bernstein}
\| P_{\leq N} \phi \|_{L^q_\loc([-R,R])} \lesssim_{D,p,q} N^{\frac{1}{p}-\frac{1}{q}} \| P_{\leq N} \phi \|_{L^p_\loc([-2R,2R])} + (RN)^{-D} \| \langle x \rangle^{-D} \phi\|_{L^1(\R)}.
\end{equation}
\end{lemma}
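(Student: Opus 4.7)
Set $\psi := P_{\leq N}\phi$. Since $\psi$ has Fourier support in $[-9N/8,9N/8]$, it is reproduced by convolution with any Schwartz kernel at scale $N^{-1}$ whose multiplier equals $1$ on that support. Concretely, choose $\widetilde{\rho}\in C_c^\infty(\R)$ with $\widetilde{\rho}\equiv 1$ on $[-9/8,9/8]$, set $\widetilde{\rho}_N(\xi):=\widetilde{\rho}(\xi/N)$, and let $K_N:=\widecheck{\widetilde{\rho}}_N$. Then $\psi = K_N\ast\psi$ pointwise, and $|K_N(z)|\lesssim_M N\langle Nz\rangle^{-M}$ for every $M\geq 1$.

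Fix $x_0\in\R$ with $|x_0|\leq R+1$, since only such $x_0$ contribute to the local norm on $[-R,R]$. Take a smooth partition of unity $\sum_{k\in\Z}\eta(\cdot-k)\equiv 1$ with $\eta\in C_c^\infty([-1,1])$, write $\psi=\sum_k\psi_k$ for $\psi_k:=\eta(\cdot-k)\psi$, and estimate $\|\psi\|_{L^q([x_0-1,x_0+1])}\leq\sum_k\|K_N\ast\psi_k\|_{L^q([x_0-1,x_0+1])}$. In the nearby regime $|k-x_0|\leq 10$ there are $O(1)$ terms with $[k-1,k+1]\subseteq[-2R,2R]$, and Young's inequality with $1/r=1-1/p+1/q$ together with the scaling bound $\|K_N\|_{L^r}\lesssim N^{1/p-1/q}$ controls each by $N^{1/p-1/q}\|\psi\|_{L^p_{\loc}([-2R,2R])}$. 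In the intermediate regime $|k-x_0|>10$ and $|k|\leq 2R$, the bound $|K_N(x-y)|\lesssim_M N(N|k-x_0|)^{-M}$ (valid since $|x-y|\gtrsim|k-x_0|$ on the relevant supports) together with H\"older on the unit-length support of $\psi_k$ (so that $\|\psi_k\|_{L^1}\lesssim\|\psi\|_{L^p_{\loc}([-2R,2R])}$) gives a summable tail $\sum_{j>10}N(Nj)^{-M}\lesssim_M N^{1-M}$, and hence a total of $\lesssim N^{1-M}\|\psi\|_{L^p_{\loc}([-2R,2R])}\leq N^{1/p-1/q}\|\psi\|_{L^p_{\loc}([-2R,2R])}$ for $M$ sufficiently large.

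The far regime $|k|>2R$ produces the error term and is the main technical point, since $\psi_k$ is now supported outside $[-2R,2R]$ and must instead be controlled directly by $\phi$. Substituting $\psi=\widecheck{\rho}_{\leq N}\ast\phi$ and swapping the order of integration gives
\[
\int_{[k-1,k+1]}|\psi(y)|\,\mathrm{d}y\leq\int|\phi(z)|\,f_k(z)\,\mathrm{d}z,\qquad f_k(z):=\int_{[k-1,k+1]}|\widecheck{\rho}_{\leq N}(y-z)|\,\mathrm{d}y.
\]
Splitting the $z$-integral into $|z-k|<2$ and $|z-k|\geq 2$ and using the Schwartz decay of $\widecheck{\rho}_{\leq N}$ shows $\sup_z\langle z\rangle^D f_k(z)\lesssim\langle k\rangle^D$, whence $\int|\phi|\,f_k\lesssim\langle k\rangle^D\|\langle z\rangle^{-D}\phi\|_{L^1}$. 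Combined with $|K_N(x-y)|\lesssim_M N(N|k|)^{-M}$ (using $|k-x_0|\gtrsim|k|$ in this regime) and the summation $\sum_{|k|>2R}|k|^{D-M}\lesssim R^{D-M+1}$ for $M>D+1$, the total far contribution is $\lesssim N^{1-M}R^{D-M+1}\|\langle z\rangle^{-D}\phi\|_{L^1}$, which is $\leq (NR)^{-D}\|\langle z\rangle^{-D}\phi\|_{L^1}$ once $M\geq 2D+1$. The one subtle point is this final balancing of the exponents of $N$ and $R$ against the required $(NR)^{-D}$ factor in the statement.
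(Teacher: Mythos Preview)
Your proof is correct and follows essentially the same strategy as the paper's: both introduce a fattened Littlewood--Paley kernel reproducing $P_{\leq N}\phi$, localize via a smooth partition of unity indexed by $\Z$, and split into near, intermediate, and far contributions, with the near terms handled by the standard Bernstein inequality (Young's inequality) and the far terms producing the weighted $L^1$ error via the rapid decay of the kernel. The only cosmetic difference is that the paper packages the far-regime estimate as a citation to a mismatch estimate together with the boundedness of $P_{\leq N}$ on polynomially-weighted $L^1$, whereas you unwind $P_{\leq N}\phi = \widecheck{\rho}_{\leq N}\ast\phi$ and bound the resulting double integral by hand.
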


The estimate \eqref{prelim:eq-bernstein} is a simple consequence of the fact that the kernels of the Littlewood-Paley operators $P_{\leq N}$ are morally supported on intervals of size $\sim N^{-1}$. For the sake of completeness, we still sketch the proof of \eqref{prelim:eq-bernstein}. 

\begin{proof}[Proof of Lemma \ref{prelim:lem-bernstein}:] 
Due to the definition in \eqref{prelim:eq-Lploc}, it suffices to prove for all $x_0 \in [-R,R]$ that 
\begin{equation}\label{prelim:eq-bernstein-1}
\| P_{\leq N} \phi \|_{L^q([x_0-1,x_0+1])} \lesssim_{p,q} N^{\frac{1}{p}-\frac{1}{q}} \| P_{\leq N} \phi \|_{L^p_\loc([-2R,2R])} + (RN)^{-D} \| \langle x \rangle^{-D} \phi\|_{L^1(\R)}.
\end{equation}
To this end, we let $\widetilde{P}_{\leq N}:=P_{\leq 4N}$ be a fattened Littlewood-Paley projection and let $\chi \colon \R \rightarrow [-1,1]$ be a smooth cut-off function satisfying $\operatorname{supp}(\chi)\subseteq [-1,1]$ and $\sum_{y_0 \in \Z} \chi( \cdot - y_0 )=1$. We then estimate
\begin{equation*}
\| P_{\leq N} \phi \|_{L^q([x_0-1,x_0+1])}
\leq \sum_{y_0\in \Z} \big\| \widetilde{P}_{\leq N} \big( \chi(\cdot-y_0) P_{\leq N}\phi \big)\big\|_{L^q([x_0-1,x_0+1])}.
\end{equation*}
For $y_0\in \Z$ satisfying $|x_0-y_0|\leq 4$, it follows from the standard Bernstein-estimate that  
\begin{align*}
&\,\big\| \widetilde{P}_{\leq N} \big( \chi(\cdot-y_0) P_{\leq N}\phi \big)\big\|_{L^q([x_0-1,x_0+1])}
\leq \big\| \widetilde{P}_{\leq N} \big( \chi(\cdot-y_0) P_{\leq N}\phi \big)\big\|_{L^q(\R)} \\ 
\lesssim&\,  N^{\frac{1}{p}-\frac{1}{q}}
\big\| \chi(\cdot-y_0) P_{\leq N}\phi \big\|_{L^p(\R)} 
\leq N^{\frac{1}{p}-\frac{1}{q}}
\big\|  P_{\leq N}\phi \big\|_{L^p([y_0-1,y_0+1])},  
\end{align*}
which is bounded by the first term in \eqref{prelim:eq-bernstein-1}. For $y_0 \in \Z$ satisfying $|x_0-y_0|>4$, it follows from standard mismatch estimates~(see e.g. \cite[Lemma 5.10]{DLM19}) that
\begin{equation*}
\big\| \widetilde{P}_{\leq N} \big( \chi(\cdot-y_0) P_{\leq N}\phi \big)\big\|_{L^q([x_0-1,x_0+1])}
\lesssim (N \langle y_0 - x_0 \rangle)^{-2D} \big\|  \chi(\cdot-y_0) P_{\leq N}\phi \big\|_{L^1(\R)}.
\end{equation*}
The sum of the contributions for $|y_0-x_0|\ll R$ can be estimated by $N^{-2D} \| P_{\leq N} \phi \|_{L^1_\loc([-2R,2R])}$, which can also be bounded by the first term in \eqref{prelim:eq-bernstein-1}.  The sum of the contribution for $|y_0-x_0|\gtrsim R$ can be estimated by $N^{-2D} R^{-D} \| \langle x \rangle^{-D} \phi \|_{L^1(\R)}$, which can be bounded by the second term in \eqref{prelim:eq-bernstein-1}.
\end{proof}

In the next lemma, we state a variant of Lemma \ref{prelim:lem-bernstein} involving Hölder-norms. 

\begin{lemma}\label{prelim:lem-Hoelder-LWP}
Let $\alpha \in [0,1)$, let $D\geq 1$, let $R\geq 10$, and let $N\geq 1$. Furthermore, let $\phi\colon \R \rightarrow \C$. Then, it holds that
\begin{align}
\big\| P_{\leq N} \phi \big\|_{C_x^\alpha([-R,R])}
&\lesssim_{\alpha,D} N^\alpha \big\| \phi \big\|_{C_x^0([-2R,2R])} + (RN)^{-D} \big\| \langle x \rangle^{-D} \phi\big \|_{L_x^1(\R)},  \label{prelim:eq-Hoelder-LWP-1}\\
N^\alpha \big\| P_N \phi \big\|_{C_x^0([-R,R])}
&\lesssim_{\alpha,D}  \big\| \phi \big\|_{C_x^\alpha([-2R,2R])} + (RN)^{-D} \big\| \langle x \rangle^{-D} \phi\big \|_{L_x^1(\R)},  \label{prelim:eq-Hoelder-LWP-2} \\
N^{\alpha-1} \big\| \partial_x P_{\leq N} \phi \big\|_{C_x^0([-R,R])}
&\lesssim_{\alpha,D}  \big\| \phi \big\|_{C_x^\alpha([-2R,2R])} + (RN)^{-D} \big\| \langle x \rangle^{-D} \phi\big \|_{L_x^1(\R)}.  \label{prelim:eq-Hoelder-LWP-3}
\end{align}
\end{lemma}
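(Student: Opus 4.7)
The plan is to reduce each of the three inequalities to its standard global Bernstein--Hölder analogue via the same localization strategy as in the proof of Lemma~\ref{prelim:lem-bernstein}. Fix a smooth cut-off $\chi\colon\R\to[0,1]$ with $\chi\equiv 1$ on $[-\tfrac{3R}{2},\tfrac{3R}{2}]$ and $\operatorname{supp}(\chi)\subseteq[-2R,2R]$, and decompose $\phi=\phi_{\mathrm{near}}+\phi_{\mathrm{far}}$ with $\phi_{\mathrm{near}}:=\chi\phi$. The near part will be controlled by global Bernstein-type estimates on $\R$, while for every $x\in[-R,R]$ the support of $\phi_{\mathrm{far}}$ lies at distance $\geq R/2$; the Schwartz decay of $\widecheck{\rho}_{\leq N}$, $\widecheck{\rho}_N$, and $\partial_x\widecheck{\rho}_{\leq N}$ then absorbs the far contribution into the error term $(RN)^{-D}\|\langle x\rangle^{-D}\phi\|_{L^1(\R)}$.

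For the near part, I would invoke the three global estimates
\begin{equation*}
\|P_{\leq N}\psi\|_{C^\alpha(\R)}\lesssim_\alpha N^\alpha \|\psi\|_{C^0(\R)},\quad
N^\alpha\|P_N\psi\|_{C^0(\R)}\lesssim_\alpha \|\psi\|_{C^\alpha(\R)},\quad
N^{\alpha-1}\|\partial_x P_{\leq N}\psi\|_{C^0(\R)}\lesssim_\alpha \|\psi\|_{C^\alpha(\R)}
\end{equation*}
applied to $\psi=\phi_{\mathrm{near}}$. The first follows by interpolating the bounds $\|\widecheck{\rho}_{\leq N}\|_{L^1}\lesssim 1$ and $\|\partial_x\widecheck{\rho}_{\leq N}\|_{L^1}\lesssim N$ at the scale $|x-x'|\sim N^{-1}$ (mean-value theorem for small separations, triangle inequality for large ones). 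The second and third exploit the cancellations $\int\widecheck{\rho}_N=\sqrt{2\pi}\,\rho_N(0)=0$ (valid for $N\geq 2$) and $\int\partial_x\widecheck{\rho}_{\leq N}=0$, which let me rewrite the convolutions against $\psi(y)-\psi(x)$ and bound them by $\|\psi\|_{C^\alpha(\R)}$ times $\int|\widecheck{\rho}_N(z)|\min(|z|^\alpha,1)\,dz\lesssim N^{-\alpha}$ or $\int|\partial_x\widecheck{\rho}_{\leq N}(z)|\min(|z|^\alpha,1)\,dz\lesssim N^{1-\alpha}$, respectively. The remaining $N=1$ case of the second estimate is immediate from Young's inequality. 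Since $\|\phi_{\mathrm{near}}\|_{C^0(\R)}\leq\|\phi\|_{C^0([-2R,2R])}$ and $\|\phi_{\mathrm{near}}\|_{C^\alpha(\R)}\lesssim\|\phi\|_{C^\alpha([-2R,2R])}$, this yields the first term on the right-hand side of each of \eqref{prelim:eq-Hoelder-LWP-1}--\eqref{prelim:eq-Hoelder-LWP-3}.

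For the far part, pointwise bounds of the form $|\widecheck{\rho}_{\leq N}(z)|+N^{-1}|\partial_x\widecheck{\rho}_{\leq N}(z)|+|\widecheck{\rho}_N(z)|\lesssim_{D'} N(1+N|z|)^{-D'}$ with $D'$ taken much larger than $D$ control the convolution of $\phi_{\mathrm{far}}$ against the kernel (and its derivative) at $x\in[-R,R]$ by $(RN)^{-D}\|\langle y\rangle^{-D}\phi\|_{L^1(\R)}$, after distributing the Schwartz decay appropriately between the prefactor $(RN)^{-D}$ and the weight $\langle y\rangle^{-D}$. For the Hölder seminorm of the far contribution on $[-R,R]$, the fact that $|x-x'|^{1-\alpha}\leq 1$ when $|x-x'|\leq 1$ reduces it to the $C^0$-norm of the derivative on the same interval, giving the same error.

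The main obstacle is purely bookkeeping: one must track the pointwise Schwartz decay of the Littlewood--Paley kernels carefully enough to absorb simultaneously the weight $\langle y\rangle^{-D}$ appearing on the right-hand side and the extra factor of $N$ that arises when differentiating the kernel in \eqref{prelim:eq-Hoelder-LWP-3}, while also verifying the cancellations needed to insert $\psi(x)$ into the integrands for \eqref{prelim:eq-Hoelder-LWP-2} and \eqref{prelim:eq-Hoelder-LWP-3}. Conceptually, nothing new beyond the proof of Lemma~\ref{prelim:lem-bernstein} is required.
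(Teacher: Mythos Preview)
Your proposal is correct and follows essentially the same strategy as the paper: both exploit the cancellations $\int\widecheck{\rho}_N=0$ (for $N>1$) and $\int\partial_x\widecheck{\rho}_{\leq N}=0$ together with the Schwartz decay of the Littlewood--Paley kernels, then perform a near/far localization to produce the $(RN)^{-D}$ error term. The only organizational difference is that the paper writes down the three integral identities directly (splitting the integration variable $z$ implicitly into near and far regions), whereas you first decompose the function $\phi=\chi\phi+(1-\chi)\phi$ and then apply global Bernstein--H\"older estimates to the near part; both routes amount to the same computation.
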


We remark that \eqref{prelim:eq-Hoelder-LWP-1} also holds with $P_{\leq N}$ replaced by $P_N$, which can be obtained by using the identity $P_N=P_{\leq N}-P_{\leq N/2}$ and the triangle inequality.

\begin{proof}
The three estimates \eqref{prelim:eq-Hoelder-LWP-1}, \eqref{prelim:eq-Hoelder-LWP-2}, and \eqref{prelim:eq-Hoelder-LWP-3} follow easily from the three identities 
\begin{align*}
P_{\leq N} \phi(x) - P_{\leq N} \phi(y)  &= \int_{\R} \widecheck{\rho}_{\leq N}(z) \big( \phi(x-z) - \phi(y-z) \big) \dz, \\ 
N^\alpha P_N \phi(x) &= \int_{\R} \widecheck{\rho}_N(z)  N^\alpha \big( \phi(x-z) - \phi(x) \big) \dz, \qquad \text{where } N >1, \\
N^{\alpha-1} \partial_x  P_{\leq N}\phi(x)
&= \int_\R \big( N^{-1} \partial_x \widecheck{\rho}_{\leq N}(z)\big) N^\alpha \big( \phi(x-z)-\phi(x) \big) \dz,  
\end{align*}
and we therefore omit the details.
\end{proof}

We also record a weighted bound for $P_{\leq N}$, which will be derived from \eqref{prelim:cor-PN-sup} with $\alpha=0$. 

\begin{corollary}\label{prelim:cor-PN-sup}
Let $\gamma\geq 0$ and let $D\geq 1$. For all $N\geq 1$, $R\geq 10$, and  $\phi\colon \R \rightarrow \C$, we then obtain
\begin{equation}\label{prelim:eq-PN-sup}
\big\| \log(R+\langle x\rangle)^{-\gamma} P_{\leq N}\phi \big\|_{L_x^\infty(\R)}
\lesssim_{\gamma,D} \big\| \log(R+\langle x\rangle)^{-\gamma} \phi \big\|_{L_x^\infty(\R)} 
+ (RN)^{-D} \big\| \langle x \rangle^{-D} \phi\big \|_{L_x^1(\R)}.
\end{equation}
\end{corollary}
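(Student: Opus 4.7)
The plan is to deduce the corollary from \eqref{prelim:eq-Hoelder-LWP-1} (with $\alpha = 0$) by applying that estimate at an enlarged scale tailored to the evaluation point. The underlying observation is that the weight $\log(R + \langle x \rangle)^{-\gamma}$ is essentially constant on intervals of size comparable to $R + \langle x_0 \rangle$ around any point $x_0$, so enlarging the reference interval from $[-R,R]$ to one of comparable size loses only harmless $\gamma$-dependent constants.

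Concretely, I would fix $x_0 \in \R$ and set $R_0 := \max(R, 2\langle x_0 \rangle)$, so that $x_0 \in [-R_0, R_0]$ and $R_0 \geq R \geq 10$. Applying \eqref{prelim:eq-Hoelder-LWP-1} with $\alpha = 0$ and $R_0$ in place of $R$ then yields
\begin{equation*}
|P_{\leq N}\phi(x_0)| \leq \|P_{\leq N}\phi\|_{C_x^0([-R_0, R_0])} \lesssim_D \|\phi\|_{C_x^0([-2R_0, 2R_0])} + (R_0 N)^{-D}\|\langle x\rangle^{-D}\phi\|_{L_x^1(\R)}.
\end{equation*}
For the first term, the bound $\langle x \rangle \lesssim R_0 \lesssim R + \langle x_0 \rangle$ on $[-2R_0, 2R_0]$ gives $\log(R + \langle x \rangle) \lesssim \log(R + \langle x_0 \rangle)$ there, whence
\begin{equation*}
\|\phi\|_{C_x^0([-2R_0, 2R_0])} \lesssim_\gamma \log(R + \langle x_0 \rangle)^\gamma \, \|\log(R + \langle y \rangle)^{-\gamma}\phi\|_{L^\infty(\R)}.
\end{equation*}
For the second term, $R_0 \geq R$ gives $(R_0 N)^{-D} \leq (RN)^{-D}$, and $\log(R + \langle x_0 \rangle)^{-\gamma} \leq \log(10)^{-\gamma} \lesssim_\gamma 1$ since $R \geq 10$. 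Multiplying the pointwise bound by $\log(R + \langle x_0 \rangle)^{-\gamma}$ and taking the supremum over $x_0 \in \R$ then produces \eqref{prelim:eq-PN-sup}.

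I do not expect a substantive obstacle here. The only step requiring any thought is the choice of the enlarged scale $R_0$, which must simultaneously be large enough to place $x_0$ well inside $[-R_0, R_0]$ and remain comparable to $R + \langle x_0 \rangle$ so that the logarithmic weight varies by at most a multiplicative constant on $[-2R_0, 2R_0]$; the choice $R_0 = \max(R, 2\langle x_0 \rangle)$ balances both requirements. A naive attempt to translate Lemma \ref{prelim:lem-Hoelder-LWP} to be centered at $x_0$ and apply Peetre's inequality to the tail term would lose a factor of $\langle x_0 \rangle^D$, which is exactly what the choice of $R_0$ avoids.
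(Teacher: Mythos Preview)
Your proof is correct and follows essentially the same approach as the paper: both arguments apply \eqref{prelim:eq-Hoelder-LWP-1} with $\alpha=0$ at a scale comparable to $R+\langle x_0\rangle$, so that the logarithmic weight is effectively constant on the relevant interval. The only cosmetic difference is that the paper packages this via a dyadic decomposition $\sup_{k\geq 0}\log(2^kR)^{-\gamma}\|P_{\leq N}\phi\|_{L^\infty([-2^kR,2^kR])}$ rather than your pointwise choice $R_0=\max(R,2\langle x_0\rangle)$.
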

\begin{proof}
Using a dyadic decomposition, we obtain that 
\begin{equation*}
\big\| \log(R+\langle x\rangle)^{-\gamma} P_{\leq N}\phi \big\|_{L_x^\infty(\R)} 
\sim  \sup_{k\geq 0} \log\big(2^k R\big)^{-\gamma} \big\| P_{\leq N} \phi \big\|_{L_x^\infty([-2^k R,2^k R])}.
\end{equation*}
After using \eqref{prelim:eq-Hoelder-LWP-1} with $\alpha=0$, we readily obtain the desired estimate \eqref{prelim:eq-PN-sup}.
\end{proof}

We now show that the $L^\infty$-norm of a frequency-localized function can be bounded using the maximum over a grid. 

\begin{lemma}\label{prelim:lem-local-constancy}
Let $D\geq 10$ and let $C=C_D$ be a sufficiently large constant depending only on $D$. Let $R\geq 10$, let $N\geq 1$, and let $\Lambda_{R,N}\subseteq [-R,R]$ be a grid with step size $\leq (RN)^{-2D}$. Then, it holds for all $\phi\colon \R \rightarrow \C$ that
\begin{equation}
\big\| P_N \phi \big\|_{L_x^\infty([-R,R])} \leq \max_{x\in \Lambda_{R,N}} \big| P_N \phi(x)\big| + C (RN)^{-D} \big\| \langle x \rangle^{-D} \phi \big\|_{L^1(\R)}.
\end{equation}
\end{lemma}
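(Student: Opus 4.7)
The strategy rests on the fact that $P_N \phi$ is morally locally constant at scale $N^{-1}$, so that sampling it on a grid of step size $\leq (RN)^{-2D} \ll N^{-1}$ recovers its full supremum on $[-R,R]$ up to a negligible error. Concretely, given an arbitrary $x \in [-R,R]$, I would let $x^\ast \in \Lambda_{R,N}$ be the nearest grid point, so that $|x - x^\ast| \leq (RN)^{-2D}$. Taking the supremum over $x$ at the very end, the lemma reduces to the pointwise estimate
\begin{equation*}
\big| P_N \phi(x) - P_N \phi(x^\ast) \big| \leq C (RN)^{-D} \big\| \langle y \rangle^{-D} \phi \big\|_{L^1(\R)}
\end{equation*}
for every $x \in [-R,R]$.

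To prove this, I would represent the difference via the fundamental theorem of calculus applied to the kernel, writing
\begin{equation*}
P_N \phi(x) - P_N \phi(x^\ast) = \int_\R \bigg( \int_{x^\ast}^x (\partial_u \widecheck{\rho}_N)(u-y) \, \mathrm{d}u \bigg) \phi(y) \, \mathrm{d}y,
\end{equation*}
and then split the $y$-integral into a near region $\{|y|\leq 2R\}$ and a far region $\{|y|>2R\}$. The two key kernel bounds are $\|\partial_u \widecheck{\rho}_N\|_{L^\infty(\R)} \lesssim N^2$ and, for any $K \geq 1$, the Schwartz-type decay $|\partial_u \widecheck{\rho}_N(z)| \lesssim_K N^2 (1 + N|z|)^{-K}$, both of which follow from $\widecheck{\rho}_N(z) = N \widecheck{\rho}_1(Nz)$.

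For the near region, I would use the $L^\infty$-bound on the kernel together with $\langle y\rangle^D \leq (1+2R)^D$ to trade $|\phi|$ for $\langle y\rangle^{-D}|\phi|$, obtaining a contribution of order $|x-x^\ast|\, N^2 R^D \|\langle y\rangle^{-D}\phi\|_{L^1(\R)} \lesssim R^{-D} N^{2-2D} \|\langle y\rangle^{-D}\phi\|_{L^1(\R)}$, which is $\leq (RN)^{-D}\|\langle y\rangle^{-D}\phi\|_{L^1(\R)}$ as soon as $D \geq 2$. For the far region, the constraints $u \in [-R,R]$ and $|y|\geq 2R$ force $|u-y| \geq \max(R, |y|/2)$, hence $1 + N|u-y| \geq \max(NR,\, (1+|y|)/2)$; choosing $K = 2D$ and splitting $(1+N|u-y|)^{-2D} \leq (NR)^{-D} \cdot 2^D (1+|y|)^{-D}$ reduces the far contribution to $|x-x^\ast|\, N^2 (RN)^{-D} \|\langle y\rangle^{-D}\phi\|_{L^1(\R)} \leq (RN)^{-D}\|\langle y\rangle^{-D}\phi\|_{L^1(\R)}$.

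The only technical nuisance will be the stray factor of $N^2$ arising from the $L^\infty$/Schwartz bounds on $\partial_u \widecheck{\rho}_N$, but this is precisely the reason that the grid step size is chosen twice as fine as the target error $(RN)^{-D}$: the two spare powers of $(RN)^{-1}$ absorb both this $N^2$ and, in the near region, the $R^D$ coming from the polynomial weight. Since the lemma assumes $D \geq 10$, the condition $D \geq 2$ needed above is amply satisfied.
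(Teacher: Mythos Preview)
Your proof is correct and follows essentially the same approach as the paper: both use the fundamental theorem of calculus to reduce to a bound on $\partial_x P_N\phi$ on $[-R,R]$, which is then controlled via the Schwartz-type decay of the Littlewood-Paley kernel. The paper simply states that the derivative bound $\|\partial_x P_N\phi\|_{L^\infty([-R,R])}\lesssim_D (RN)^D\|\langle x\rangle^{-D}\phi\|_{L^1(\R)}$ ``follows directly from the properties of the Littlewood-Paley kernels,'' whereas you have unpacked precisely this step via the near/far split in $y$.
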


\begin{proof}
For each $x\in [-R,R]$, there exists an $y\in \Lambda_{R,N}$ such that $|x-y|\leq (RN)^{-2D}$. Together with the fundamental theorem of calculus, it then follows that 
\begin{equation*}
\big\| P_N \phi \big\|_{L_x^\infty([-R,R])} \leq \max_{x\in \Lambda_{R,N}} \big| P_N \phi(x)\big| + (RN)^{-2D} \big\|\partial_x P_N \phi(x) \big\|_{L_x^\infty([-R,R])}.
\end{equation*}
It therefore suffices to prove that 
\begin{equation*}
\big\| \partial_x P_N \phi(x) \big\|_{L_x^\infty([-R,R])}
\lesssim_D (RN)^{D} \big\| \langle x \rangle^{-D} \phi\big\|_{L_x^1(\R)},
\end{equation*}
which follows directly from the properties of the Littlewood-Paley kernels. 
\end{proof}

As previously discussed in Subsection \ref{section:introduction-results}, we will later estimate the local mass of a difference of two solutions of \eqref{intro:eq-NLS}. To prepare for this, we introduce the weight 
\begin{equation}\label{prelim:eq-sigma}
\sigma_R(x):= e^{-\scalebox{1}{$\langle$} \tfrac{x}{R} \scalebox{1}{$\rangle$}}
\end{equation}
and state and prove the following two lemmas. 

\begin{lemma}\label{prelim:lem-PR-weight}
Let $D\geq 1$, let $R\geq 10$, and let $\sigma_R$ be as in \eqref{prelim:eq-sigma}. For all $\phi\colon \R \rightarrow \C$, it then holds that 
\begin{align}
\big\| \sqrt{\sigma_R} P_{\leq R} \phi\big\|_{L_x^2(\R)}
&\lesssim_D  \big\| \sqrt{\sigma_R} \phi \big\|_{L_x^2(\R)}
+ R^{-D} \big\| \langle x \rangle^{-D} \phi\|_{L_x^1(\R)}, \label{prelim:lem-PR-weight-e1}\\
\big\| \sqrt{\sigma_R} \partial_x P_{\leq R} \phi \big\|_{L_x^2(\R)}
&\lesssim_D R \big\| \sqrt{\sigma_R} P_{\leq R} \phi\big\|_{L_x^2(\R)}
+  R^{-D} \big\| \langle x \rangle^{-D} \phi\|_{L_x^1(\R)}.\label{prelim:lem-PR-weight-e2} 
\end{align}
Furthermore, for all $10 \leq R_1 \leq \tfrac{1}{4} R_2$, it holds that 
\begin{equation}\label{prelim:lem-PR-weight-e3}
\big\| \sqrt{\sigma_{R_1}} P_{\leq R_1} \phi\big\|_{L_x^2(\R)}
\lesssim_D  \big\| \sqrt{\sigma_{R_2}} P_{\leq R_2} \phi \big\|_{L_x^2(\R)}
+ R_1^{-D} \big\| \langle x \rangle^{-D} \phi\|_{L_x^1(\R)}. 
\end{equation}
\end{lemma}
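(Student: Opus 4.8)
The plan is to deduce all three bounds from the slow variation of the weight $\sigma_R$ together with the rapid decay of the Littlewood--Paley kernels, writing $P_{\leq R}\phi = \widecheck{\rho}_{\leq R}\ast\phi$. Since $\widecheck{\rho}_{\leq R}(z) = R\,\widecheck{\rho}(Rz)$, one has $|\widecheck{\rho}_{\leq R}(z)|\lesssim_M R(1+R|z|)^{-M}$ for every $M\geq 1$ and $\|\widecheck{\rho}_{\leq R}\|_{L^1(\R)}\lesssim 1$. Three elementary observations will be used repeatedly: \emph{(a)} since $\langle\cdot\rangle$ is $1$-Lipschitz, $|\langle x/R\rangle - \langle (x-z)/R\rangle|\leq |z|/R$, so $\sqrt{\sigma_R(x)}\lesssim \sqrt{\sigma_R(x-z)}$ whenever $|z|\leq R$, in particular whenever $|z|\leq 1$ (recall $R\geq 10$); \emph{(b)} the Gaussian-type decay of $\sqrt{\sigma_R}$ absorbs polynomial growth at the cost of a power of $R$, namely $\sqrt{\sigma_R(x)}\,\langle x\rangle^D\leq e^{-|x|/(2R)}\langle x\rangle^D\lesssim_D R^D$; and \emph{(c)} for $R\geq 1$ one has $\|\langle x\rangle^{-D}P_{\leq R}\phi\|_{L^1(\R)}\lesssim_D \|\langle x\rangle^{-D}\phi\|_{L^1(\R)}$, which follows from $\langle y\rangle^{-D}\lesssim_D \langle y-z\rangle^{-D}\langle z\rangle^D$, the elementary bound $\|\langle z\rangle^D\widecheck{\rho}_{\leq R}\|_{L^1(\R)}\lesssim_D 1$ (valid since $R\geq 1$), and Young's inequality $L^1\ast L^1\to L^1$.

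To prove \eqref{prelim:lem-PR-weight-e1} I split the convolution $\widecheck{\rho}_{\leq R}\ast\phi$ into the regions $|z|\leq 1$ and $|z|>1$. On the first region, observation \emph{(a)} gives the pointwise bound $\sqrt{\sigma_R(x)}\,|\widecheck{\rho}_{\leq R}(z)\phi(x-z)|\lesssim |\widecheck{\rho}_{\leq R}(z)|\,\sqrt{\sigma_R(x-z)}\,|\phi(x-z)|$, so this contribution is at most $\big(|\widecheck{\rho}_{\leq R}|\ast(\sqrt{\sigma_R}\,|\phi|)\big)(x)$, and Young's inequality ($L^1\ast L^2\to L^2$) with $\|\widecheck{\rho}_{\leq R}\|_{L^1}\lesssim 1$ produces the first term on the right-hand side of \eqref{prelim:lem-PR-weight-e1}. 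On the second region I use $|\widecheck{\rho}_{\leq R}(z)|\lesssim_M R^{1-M}|z|^{-M}$, change variables $y=x-z$, insert $\langle y\rangle^{\pm D}$, and use $\langle y\rangle^D\lesssim_D \langle x\rangle^D\langle x-y\rangle^D$ together with $|x-y|^{-M}\langle x-y\rangle^D\lesssim\langle x-y\rangle^{-2}$ (valid on $|x-y|>1$ once $M\geq D+2$); invoking observation \emph{(b)} and choosing $M=2D+1$, the far contribution is pointwise at most $R^{-D}\big(\langle\cdot\rangle^{-2}\ast(\langle\cdot\rangle^{-D}|\phi|)\big)(x)$, and Young's inequality ($L^2\ast L^1\to L^2$, using $\langle\cdot\rangle^{-2}\in L^2(\R)$) gives the second term on the right-hand side of \eqref{prelim:lem-PR-weight-e1}.

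For \eqref{prelim:lem-PR-weight-e2} I write $\partial_x P_{\leq R} = T_n\circ P_{\leq R}$, where $T_n$ is the Fourier multiplier with symbol $n(\xi):=i\xi\,\rho_{\leq 2R}(\xi)$; this is legitimate because $\rho_{\leq 2R}\equiv 1$ on the support of $\rho_{\leq R}$ (as $\rho$ is supported in $[-\tfrac98,\tfrac98]$ and equals $1$ on $[-1,1]$), so $n\,\rho_{\leq R}=i\xi\,\rho_{\leq R}$. Scaling gives $n(\xi)=R\,q(\xi/R)$ with $q(\eta)=i\eta\,\rho_{\leq 2}(\eta)$ Schwartz, hence $\widecheck{n}(z)=R^2\,\widecheck{q}(Rz)$, so $|\widecheck{n}(z)|\lesssim_M R^2(1+R|z|)^{-M}$ and $\|\widecheck{n}\|_{L^1}\lesssim R$. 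Writing $\partial_x P_{\leq R}\phi(x)=\int_\R \widecheck{n}(z)\,(P_{\leq R}\phi)(x-z)\dz$ and repeating the argument for \eqref{prelim:lem-PR-weight-e1} with $P_{\leq R}\phi$ in place of $\phi$ and $\widecheck{n}$ in place of $\widecheck{\rho}_{\leq R}$, the region $|z|\leq 1$ produces $R\,\|\sqrt{\sigma_R}\,P_{\leq R}\phi\|_{L^2}$ (the extra factor $R$ coming from $\|\widecheck{n}\|_{L^1}\lesssim R$), while the region $|z|>1$, after choosing $M=2D+2$, produces $\lesssim_D R^{-D}\|\langle x\rangle^{-D}P_{\leq R}\phi\|_{L^1}$, which by observation \emph{(c)} is $\lesssim_D R^{-D}\|\langle x\rangle^{-D}\phi\|_{L^1}$, as needed. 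Finally, \eqref{prelim:lem-PR-weight-e3} follows by combining: the identity $P_{\leq R_1}=P_{\leq R_1}P_{\leq R_2}$ (valid since $\rho_{\leq R_2}\equiv 1$ on the support of $\rho_{\leq R_1}$ when $R_1\leq\tfrac14 R_2$); estimate \eqref{prelim:lem-PR-weight-e1} applied with $R=R_1$ and $\phi$ replaced by $P_{\leq R_2}\phi$; the pointwise monotonicity $\sigma_{R_1}\leq \sigma_{R_2}$ for $R_1\leq R_2$, which gives $\|\sqrt{\sigma_{R_1}}\,P_{\leq R_2}\phi\|_{L^2}\leq\|\sqrt{\sigma_{R_2}}\,P_{\leq R_2}\phi\|_{L^2}$; and observation \emph{(c)} to control the remaining weighted $L^1$-term.

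I expect the only genuinely delicate point to be the bookkeeping in the far region $|z|>1$: one must peel off enough spatial decay — via $\langle y\rangle^D\lesssim\langle x\rangle^D\langle x-y\rangle^D$ together with observations \emph{(a)} and \emph{(b)} — so that what remains is a fixed integrable convolution kernel in $x$, while simultaneously choosing the decay exponent $M$ of the Littlewood--Paley (or derivative) kernel large enough relative to $D$ that the surplus powers of $R$ assemble into the asserted factor $R^{-D}$. Everything else is a routine application of Young's and Minkowski's inequalities.
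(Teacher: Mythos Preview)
Your proof is correct and follows essentially the same strategy as the paper: exploit that $\sigma_R$ is slowly varying on the support scale of the Littlewood--Paley kernel, handle the near part by Young/Cauchy--Schwarz, and bound the tail using the rapid kernel decay together with $\sqrt{\sigma_R(x)}\langle x\rangle^D\lesssim_D R^D$. The only cosmetic difference is that the paper splits the convolution at $|x-y|\leq R$ versus $|x-y|>R$ (the natural scale of the weight) rather than at $|z|\leq 1$ versus $|z|>1$; your choice still works because on $|z|>1$ one already has $R|z|\geq R\geq 10$, so the kernel decay $R(1+R|z|)^{-M}\lesssim R^{1-M}|z|^{-M}$ kicks in and the bookkeeping goes through exactly as you wrote.
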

The reason behind \eqref{prelim:lem-PR-weight-e1} and \eqref{prelim:lem-PR-weight-e2} is that the kernel of $P_{\leq R}$ is morally supported on the physical scale $\sim R^{-1}$, on which the weight $\sigma_R$ is morally constant.  Thus, the action of $P_{\leq R}$ and multiplication by $\sigma_R$ almost commute. Similar estimates were used and proven in \mbox{\cite[(4.9)-(4.11)]{B00}}. For the sake of completeness, we present the short proof. 

\begin{proof} 
We first prove the second estimate \eqref{prelim:lem-PR-weight-e2}, which is the most difficult estimate in this lemma. Let $\widetilde{P}_{\leq R}:=P_{\leq 4R}$ be a fattened Littlewood-Paley operator and note that, due to \eqref{prelim:eq-rho-kernel}, its kernel is given by $\widecheck{\rho}_{\leq 4R}$. Due to the decay and smoothness properties of $\widecheck{\rho}_{\leq 4R}$, it holds that
\begin{equation}\label{prelim:eq-PR-weight-q1}
 \big| (\partial_x \widecheck{\rho}_{\leq 4R})(x-y)\big| \lesssim R^2 \langle R (x-y) \rangle^{-A},
\end{equation}
where $A$ is a sufficiently large constant depending only on $D$. Furthermore, we have the identity 
\begin{equation}\label{prelim:eq-PR-weight-q2}
\partial_x P_{\leq R} \phi = \partial_x \widetilde{P}_{\leq R} P_{\leq R} \phi = \partial_x \big( \widecheck{\rho}_{\leq 4R} \ast P_{\leq R} \phi \big) =  (\partial_x \widecheck{\rho}_{\leq 4R}) \ast P_{\leq R} \phi.
\end{equation}
Using the identity \eqref{prelim:eq-PR-weight-q2}, the square of the left-hand side of \eqref{prelim:lem-PR-weight-e2} can be estimated by 
\begin{align}
\int_{\R} \big| \partial_x P_{\leq R} \phi(x) \big|^2  \sigma_R(x) \dx 
&= \int_{\R} \bigg| \int_\R (\partial_x \widecheck{\rho}_{\leq 4R})(x-y) P_{\leq R}\phi(y) \dy\bigg|^2 \sigma_R(x) \dx \notag \\
&\lesssim \int_{\R} \bigg| \int_\R \ind_{|x-y|\leq R} (\partial_x \widecheck{\rho}_{\leq 4R})(x-y) P_{\leq R}\phi(y) \dy\bigg|^2 \sigma_R(x) \dx \label{prelim:eq-PR-weight-q3} \\
 &+ \int_{\R} \bigg| \int_\R \ind_{|x-y|> R} (\partial_x \widecheck{\rho}_{\leq 4R})(x-y) P_{\leq R}\phi(y) \dy\bigg|^2 \sigma_R(x) \dx \label{prelim:eq-PR-weight-q4}. 
\end{align}
From the definition of $\sigma_R$, it follows that $\sigma_R(x)\sim \sigma_R(y)$ for all $x,y\in \R$ satisfying $|x-y|\leq R$. Using this, we can estimate 
\begin{align*}
\eqref{prelim:eq-PR-weight-q3} 
&\lesssim \int_\R \bigg( \int_\R |(\partial_x \widecheck{\rho}_{\leq 4R})(x-y)| |P_{\leq R}\phi(y)| \sqrt{\sigma_R(y)} \dy \bigg)^2 \dx  \\
&\lesssim \int_\R \bigg( \int_\R |(\partial_x \widecheck{\rho}_{\leq 4R})(x-y)| |P_{\leq R}\phi(y)|^2  \sigma_R(y) \dy \bigg) \bigg( \int_\R |(\partial_x \widecheck{\rho}_{\leq 4R})(x-y)| \dy \bigg)  \dx \\
&\lesssim \big\| \partial_x \widecheck{\rho}_{\leq 4R} \big\|_{L_x^1(\R)}^2 \big\| \sqrt{\sigma_R} P_{\leq R} \phi \big\|_{L_y^2(\R)}^2 
\lesssim R^2 \big\| \sqrt{\sigma_R} P_{\leq R} \phi \big\|_{L_y^2(\R)}^2.
\end{align*}
In the last inequality, we also used \eqref{prelim:eq-PR-weight-q1}. Thus, \eqref{prelim:eq-PR-weight-q3} yields an acceptable contribution to \eqref{prelim:lem-PR-weight-e2}. In order to estimate \eqref{prelim:eq-PR-weight-q4}, we first further estimate the right-hand side of \eqref{prelim:eq-PR-weight-q1}. For all $x,y\in \R$ satisfying $|x-y|>R \geq 1$, it holds that 
\begin{equation*}
R^2 \langle R (x-y) \rangle^{-A} \lesssim R^{-A+2} \langle x - y\rangle^{-A} \lesssim R^{-A+2}\langle x -y \rangle^{-D} \lesssim R^{-A+2} \langle x \rangle^D \langle y \rangle^{-D}, 
\end{equation*}
where we used that $A\geq D$. Using this, it follows that 
\begin{equation*}
\eqref{prelim:eq-PR-weight-q4} 
\lesssim R^{-2A+4} \int_\R \bigg( \int_\R \langle y \rangle^{-D} |P_{\leq R} \phi(y)| \dy \bigg)^2 \langle x \rangle^{2D}\sigma_R(x) \dx 
\lesssim R^{-2A+2D+5} \big\| \langle y \rangle^{-D} P_{\leq R} \phi \big\|_{L_y^1(\R)}^2.
\end{equation*}
After using that $A$ is sufficiently large depending on $D$ and that $P_{\leq R}$ is uniformly bounded on polynomially-weighted $L^1(\R)$-spaces, this completes the proof of \eqref{prelim:lem-PR-weight-e2}. The first estimate \eqref{prelim:lem-PR-weight-e1} can be obtained using similar arguments as for \eqref{prelim:lem-PR-weight-e2}, where the main difference is that \eqref{prelim:eq-PR-weight-q2} is replaced by the simpler identity $P_{\leq R} \phi=\rho_{\leq R}\ast \phi$. The third estimate \eqref{prelim:lem-PR-weight-e3} can be derived from the first estimate \eqref{prelim:lem-PR-weight-e1}. Indeed, using $P_{\leq R_1} \phi = P_{\leq R_1} P_{\leq R_2} \phi$ and \eqref{prelim:lem-PR-weight-e1}, we obtain that 
\begin{align*}
\big\| \sqrt{\sigma_{R_1}} P_{\leq R_1} \phi\big\|_{L_x^2(\R)} &= \big\| \sqrt{\sigma_{R_1}} P_{\leq R_1} P_{\leq R_2} \phi\big\|_{L_x^2(\R)} 
\lesssim \big\| \sqrt{\sigma_{R_1}} P_{\leq R_2} \phi\big\|_{L_x^2(\R)} + R_1^{-D}  \big\| \langle x \rangle^{-D} \phi\|_{L_x^1(\R)}. 
\end{align*}
Together with $\sigma_{R_1}\leq \sigma_{R_2}$, this implies \eqref{prelim:lem-PR-weight-e3}. 
\end{proof}

\begin{lemma}[Commutator estimate]\label{prelim:lem-commutator}
Let $\delta>0$, let $R\geq 10$, and let $\sigma_R$ be as in \eqref{prelim:eq-sigma}. For all $Q\colon \R \rightarrow \C$ and $\phi \colon \R \rightarrow \C$, it then holds that
\begin{equation}\label{prelim:eq-commutator}
\big\| \sqrt{\sigma_R} \,  \big[ P_{\leq R}, Q \big] \phi \big\|_{L_x^2(\R)}
\lesssim_\delta R^{-\frac{1}{2}+2\delta} \big\| \langle x \rangle^{-\delta} \partial_x Q \big\|_{L_x^\infty(\R)} \big\| \langle x \rangle^{-\delta} \phi \big\|_{L_x^\infty(\R)}.
\end{equation}
\end{lemma}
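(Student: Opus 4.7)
The plan is to exploit the commutator structure by writing
\[
[P_{\leq R}, Q] \phi(x) = \int_\R \widecheck{\rho}_{\leq R}(x-y)\bigl(Q(y) - Q(x)\bigr)\phi(y)\, dy,
\]
and then splitting the $y$-integral into a near regime $|x-y|\leq 1$ and a far regime $|x-y|>1$. The near regime will carry the main contribution, while the far regime is essentially free due to the rapid decay $|\widecheck{\rho}_{\leq R}(z)|\lesssim_A R\langle Rz\rangle^{-A}$ of the Littlewood-Paley kernel.

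In the near regime, I would use the fundamental theorem of calculus to bound
\[
|Q(y) - Q(x)| \leq |y-x|\sup_{z\in[x,y]}|\partial_x Q(z)| \lesssim |y-x|\,\langle x\rangle^\delta\, \bigl\|\langle x\rangle^{-\delta}\partial_x Q\bigr\|_{L^\infty(\R)},
\]
where the step $\langle z\rangle\lesssim \langle x\rangle$ uses $|z-x|\leq 1$. A similar estimate gives $|\phi(y)|\lesssim \langle x\rangle^\delta \|\langle x\rangle^{-\delta}\phi\|_{L^\infty}$. The one-dimensional kernel identity $|z|R\langle Rz\rangle^{-A}\leq \langle Rz\rangle^{-A+1}$ yields $\int |z|\,|\widecheck{\rho}_{\leq R}(z)|\,dz\lesssim R^{-1}$, so the near contribution to the pointwise estimate becomes
\[
\bigl|[P_{\leq R}, Q]\phi(x)\bigr|_{\mathrm{near}} \lesssim R^{-1}\,\langle x\rangle^{2\delta}\, \bigl\|\langle x\rangle^{-\delta}\partial_x Q\bigr\|_{L^\infty}\,\bigl\|\langle x\rangle^{-\delta}\phi\bigr\|_{L^\infty}.
\]
Multiplying by $\sqrt{\sigma_R}$ and taking $L^2$, I would use the direct computation
\[
\bigl\|\sqrt{\sigma_R}\,\langle x\rangle^{2\delta}\bigr\|_{L^2(\R)}^2 = \int_\R e^{-\langle x/R\rangle}\langle x\rangle^{4\delta}\,dx \lesssim R^{1+4\delta},
\]
obtained by splitting $|x|\leq R$ (polynomial integration) from $|x|> R$ (exponential tail). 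The combination $R^{-1}\cdot R^{1/2+2\delta} = R^{-1/2+2\delta}$ precisely yields \eqref{prelim:eq-commutator}.

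For the far regime, the same FTC bound $|Q(y)-Q(x)|\leq |y-x|(\langle x\rangle+\langle y\rangle)^\delta\|\langle x\rangle^{-\delta}\partial_x Q\|_{L^\infty}$ produces at worst polynomial growth in $\langle x-y\rangle$, which is trivially absorbed by the bound $|\widecheck{\rho}_{\leq R}(x-y)|\lesssim_A R^{-A+1}\langle x-y\rangle^{-A}$ for arbitrary $A$; choosing $A$ sufficiently large makes this contribution much smaller than the target right-hand side. I do not anticipate a real obstacle: the proof is essentially a bookkeeping exercise. The only point requiring care is the precise accounting of the three competing factors, namely the $R^{-1}$ gain from the first moment of the kernel, the $R^{1/2}$ growth of $\|\sqrt{\sigma_R}\|_{L^2}$, and the $R^{2\delta}$ loss coming from the pointwise weight $\langle x\rangle^{2\delta}$ restricted to the effective support of $\sigma_R$. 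These combine to give precisely the exponent $-\tfrac12+2\delta$ appearing in the statement.
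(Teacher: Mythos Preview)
Your proposal is correct and follows essentially the same approach as the paper: write the commutator via the kernel identity, use the fundamental theorem of calculus to extract $|x-y|\,\|\langle x\rangle^{-\delta}\partial_x Q\|_{L^\infty}$, gain $R^{-1}$ from the first moment of $\widecheck{\rho}_{\leq R}$, and then compute $\int \langle x\rangle^{4\delta}\sigma_R(x)\,dx\lesssim R^{1+4\delta}$. The paper absorbs your near/far split into a single ``direct calculation'' using the bound $(\langle x\rangle^\delta+\langle y\rangle^\delta)$ rather than splitting, but the substance is identical.
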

\begin{proof}
Using the definitions of the commutator and the $L^\infty$-norm, we have that \begin{align*}
&\, \int_\R \big| \big[ P_{\leq R}, Q \big] \phi(x)\big|^2 \sigma_R(x) \dx \\
=&\, \int_\R \bigg| \int_\R \rho_{\leq R}(x-y) \big( Q(x)-Q(y)\big) \phi(y) \dy \bigg|^2 \sigma_R(x) \dx \\
\lesssim&\, \| \langle x \rangle^{-\delta} \partial_x Q \|_{L_x^\infty(\R)}^2
\| \langle x \rangle^{-\delta} \phi \|_{L_x^\infty(\R)}^2 
\int_\R \bigg(\int_\R \big| \rho_{\leq R}(x-y) \big| \, \big( \langle x \rangle^\delta + \langle y \rangle^\delta\big) \big|x-y\big|  \, \langle y \rangle^\delta dy\bigg)^2 \sigma_R(x) \dx. 
\end{align*}
From a direct calculation, it follows that
\begin{align*}
\int_\R \bigg(\int_\R \big| \rho_{\leq R}(x-y) \big| \, \big( \langle x \rangle^\delta + \langle y \rangle^\delta\big) \big|x-y\big|  \, \langle y \rangle^\delta dy\bigg)^2 \sigma_R(x) \dx
\lesssim  R^{-2} \int_\R \langle x \rangle^{4\delta} \sigma_R(x)  \lesssim  R^{-1+4\delta },
\end{align*}
which then implies the desired estimate  \eqref{prelim:eq-commutator}. 
\end{proof}

\subsection{Probability theory}\label{section:prelim-prob}

We first state a lemma that controls the tails of maxima of random variables.

\begin{lemma}[Maximum tail estimate]\label{prelim:lem-maximum-tail} 
Let $J$ be a finite index set and let $(X_j)_{j\in J}$ be random variables. Furthermore, let $A\geq 0$, $B>0$, $\beta>0$, $\gamma>0$, and $q>0$ be parameters. Finally, assume that the tail estimate
\begin{equation}\label{prelim:eq-maximum-tail-assumption}
\max_{j\in J} \bP \Big( |X_j| \geq \beta^{-\frac{1}{q}} \big( A+\lambda)^{\frac{1}{q}} \Big) \leq B e^{-\gamma\lambda}
\end{equation}
is satisfied for all $\lambda \geq 0$. Then, the maximum tail estimate
\begin{equation}\label{prelim:eq-maximum-tail-estimate}
\bP \Big( \max_{j\in J} |X_j| \geq \beta^{-\frac{1}{q}} \big( A+ \gamma^{-1}\log(|J|)+\lambda)^{\frac{1}{q}} \Big) \leq B e^{-\gamma\lambda}
\end{equation}
holds for all $\lambda >0$. 
\end{lemma}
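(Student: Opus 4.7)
The plan is to prove this via a straightforward union bound, with the logarithmic shift in the threshold arising precisely to absorb the factor of $|J|$ produced by that bound. The hypothesis \eqref{prelim:eq-maximum-tail-assumption} tells us how a shift in the parameter $\lambda$ translates into exponential decay of the tail, so the strategy is to choose a shift that exactly compensates for a cardinality factor.

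More concretely, I would first fix $\lambda > 0$ and set $\mu := \gamma^{-1} \log(|J|) + \lambda$, noting $\mu \geq 0$ so that the assumption \eqref{prelim:eq-maximum-tail-assumption} applies at the value $\mu$. By a union bound over the finite index set $J$, one then writes
\begin{equation*}
\bP \Big( \max_{j \in J} |X_j| \geq \beta^{-\frac{1}{q}} (A + \mu)^{\frac{1}{q}} \Big)
\leq \sum_{j \in J} \bP \Big( |X_j| \geq \beta^{-\frac{1}{q}} (A + \mu)^{\frac{1}{q}} \Big)
\leq |J| \, B \, e^{-\gamma \mu}.
\end{equation*}
Substituting the definition of $\mu$ gives $|J| \, e^{-\gamma \mu} = |J| \, e^{-\log(|J|)} e^{-\gamma \lambda} = e^{-\gamma \lambda}$, which yields exactly the right-hand side of \eqref{prelim:eq-maximum-tail-estimate}.

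There is no serious obstacle here; the lemma is essentially bookkeeping that converts pointwise tail bounds into maximum tail bounds at the expense of a $\gamma^{-1} \log(|J|)$ shift inside the $(\cdot)^{1/q}$ factor. The only mildly subtle point is that the shift is placed inside the power $1/q$ rather than outside — but this is automatic, because we apply \eqref{prelim:eq-maximum-tail-assumption} with the shifted parameter before taking the $1/q$-th root, and the union-bound argument does not interact with the exponent at all. Thus the proof reduces to the two-line calculation above.
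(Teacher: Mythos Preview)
Your proposal is correct and is essentially identical to the paper's own proof: both apply a union bound over $J$, invoke the hypothesis at the shifted parameter $\gamma^{-1}\log(|J|)+\lambda$, and cancel the $|J|$ factor against $e^{-\log(|J|)}$. The only cosmetic difference is that you introduce the auxiliary variable $\mu$, whereas the paper writes the shifted parameter directly inline.
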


We remark that the tail estimate in \eqref{prelim:eq-maximum-tail-assumption} with $A=0$ and $\gamma=1$ can often be obtained from Markov's inequality and the moment estimate 
\begin{equation*}
\max_{j\in J} \E \Big[ e^{\beta |X_j|^q} \Big] \leq B.
\end{equation*}
For the sake of completeness, we include the simple proof of Lemma \ref{prelim:lem-maximum-tail}.  

\begin{proof}[Proof of Lemma \ref{prelim:lem-maximum-tail}:]
Using a union bound, we have that 
\begin{align*}
&\, \bP \Big( \max_{j\in J} |X_j| \geq \beta^{-\frac{1}{q}} \big( A+\gamma^{-1} \log(|J|)+\lambda)^{\frac{1}{q}} \Big)
\leq \sum_{j\in J} \bP \Big( |X_j| \geq \beta^{-\frac{1}{q}} \big( A+\gamma^{-1}\log(|J|)+\lambda)^{\frac{1}{q}} \Big) \\
\leq &\, 
\sum_{j\in J} B e^{-\log(|J|)-\gamma\lambda} \leq B e^{-\gamma\lambda}. \qedhere
\end{align*}
\end{proof}

We also need the following quantitative version of Kolmogorov's continuity theorem, which can be found in \cite[Theorem 4.3.2]{S93}. 

\begin{lemma}[Kolmogorov's continuity theorem]\label{prelim:lem-kolmogorov}
Let $(X(t))_{t\in [0,1]}$ be a continuous stochastic process taking values in a Banach space $B$. Furthermore, let $A>0$, let $0<\alpha<\beta<1$, and let $q\geq 1$. Finally, assume that 
\begin{equation*}
\sup_{0\leq s < t \leq 1} \E \bigg[ \bigg( \frac{\| X(t)-X(s)\|_B}{|t-s|^{\beta+\frac{1}{q}}} \bigg)^q \bigg]^{\frac{1}{q}} \leq A.
\end{equation*}
Then, it holds that 
\begin{equation*}
 \E \bigg[  \sup_{0\leq s < t \leq 1} \bigg( \frac{\| X(t)-X(s)\|_B}{|t-s|^{\alpha}} \bigg)^q \bigg]^{\frac{1}{q}} \lesssim_{\alpha,\beta} C_{\alpha,\beta} A,
\end{equation*}
where the implicit constant depends on $\alpha$ and $\beta$, but is uniform in $q$.
\end{lemma}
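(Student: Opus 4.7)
The plan is to prove this via the classical dyadic chaining argument. I would first introduce the dyadic rationals $D_n := \{k 2^{-n} : 0 \leq k \leq 2^n\}$, their union $D := \bigcup_{n \geq 0} D_n$, and the maximal increment random variables
\begin{equation*}
M_n := \max_{0 \leq k \leq 2^n - 1} \big\| X\big((k+1)2^{-n}\big) - X\big(k 2^{-n}\big) \big\|_B.
\end{equation*}
The first step is to control the $L^q(\bP)$-norm of $M_n$. Using the elementary bound $M_n^q \leq \sum_k \| X((k+1)2^{-n}) - X(k 2^{-n})\|_B^q$ together with the hypothesis gives
\begin{equation*}
\E\big[M_n^q\big]^{1/q} \leq \Big( 2^n \cdot A^q \, (2^{-n})^{q\beta + 1}\Big)^{1/q} = A \cdot 2^{-n\beta}.
\end{equation*}
This is the key reason behind the exponent $\beta + 1/q$ in the hypothesis: the factor $2^{n/q}$ coming from the union-bound-in-$L^q$-norm is exactly cancelled by the factor $(2^{-n})^{1/q}$ coming from $|t-s|^{q\beta+1}$, leaving the clean power $2^{-n\beta}$ \emph{uniformly in $q$}.

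Next, for any $s < t$ in $D$ I would choose $n \geq 0$ with $2^{-n-1} < t - s \leq 2^{-n}$. A standard recursive approximation of $s$ and $t$ by their dyadic parents yields the deterministic telescoping estimate
\begin{equation*}
\big\| X(t) - X(s) \big\|_B \leq C \sum_{m \geq n} M_m
\end{equation*}
for some absolute constant $C$. Dividing by $|t-s|^\alpha \geq 2^{-(n+1)\alpha}$ and using $2^{n\alpha} \leq 2^{m\alpha}$ for $m \geq n$ gives
\begin{equation*}
\sup_{\substack{s,t \in D \\ s < t}} \frac{\| X(t) - X(s)\|_B}{|t-s|^\alpha} \leq 2^\alpha C \sum_{m \geq 0} 2^{m \alpha} M_m.
\end{equation*}

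Taking $L^q$-norms and invoking the triangle inequality together with the first-step estimate, the right-hand side is bounded by
\begin{equation*}
2^\alpha C \sum_{m \geq 0} 2^{m\alpha} \, \E[M_m^q]^{1/q} \leq 2^\alpha C \cdot A \sum_{m \geq 0} 2^{-m(\beta - \alpha)} \lesssim_{\alpha,\beta} A,
\end{equation*}
where the geometric series converges since $\beta > \alpha$. To finish, I would use the continuity of $t \mapsto X(t)$ to upgrade the supremum over $D$ to the supremum over $[0,1]$: since $D \subseteq [0,1]$ is dense and $X$ is continuous, the two suprema agree $\bP$-almost surely. The argument is entirely standard; the only subtle point is tracking that the final implicit constant is independent of $q$, which as noted is precisely the role played by the $+1/q$ in the exponent of the hypothesis, so I do not anticipate a significant conceptual obstacle.
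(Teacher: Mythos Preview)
Your proof is correct and follows the standard dyadic chaining argument; in particular, your observation that the extra $+\tfrac{1}{q}$ in the hypothesis exactly cancels the $2^{n/q}$ from the union-bound-in-$L^q$ is precisely the mechanism that makes the final constant uniform in $q$. The paper does not actually give its own proof of this lemma---it simply cites \cite[Theorem 4.3.2]{S93}---so there is no in-paper argument to compare against, but your sketch is exactly the kind of proof one finds in that reference.
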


\section{\protect{The Gibbs measure in infinite volume}}\label{section:measure}

In this section, we study the infinite-volume $\Phi^{p+1}_1$-measure and its finite-volume approximations. In particular, we prove Theorem \ref{intro:thm-measure}, which controls the $L^\infty([-R,R])$-norm of the samples. In Subsection \ref{section:hairer-steele}, we use the Hairer-Steele method from \cite{HS22} to control exponential moments of the $(p+1)$-th power of $\| \phi \|_{L^{p+1}([-1,1])}$. Together with Bernstein's inequality, translation-invariance, and maximum tail inequalities, this implies that 
\begin{equation*}
\big\| P_{<N} \phi \big \|_{L^\infty([-R,R])} \lesssim N^{\frac{1}{p+1}} \log(R)^{\frac{1}{p+1}}
\end{equation*}
with high probability. In Subsection \ref{section:Gaussian}, we rely on a Brascamp-Lieb inequality \cite{BL76}, which allows us to control the Gibbs measures $\mu_L$ using the Gaussian measures $\cg_L$. Together with Gaussian estimates, this implies that
\begin{equation*}
\big\| P_{\geq N} \phi \big\|_{L^\infty([-R,R])} \lesssim N^{-\frac{1}{2}}  \log(RN)^{\frac{1}{2}}
\end{equation*}
with high probability. In Subsection \ref{section:proof-measure}, we then obtain Theorem \ref{intro:thm-measure} by combining both estimates and optimizing in $N$. Finally, in Subsection \ref{section:measure-coupling}, we construct a coupling of the Gibbs measures satisfying the assumptions stated in Theorem \ref{intro:thm-dynamics}. This coupling is constructed using a density estimate (Lemma \ref{measure:lem-density}), an estimate of the Wasserstein-distance between $\mu$ and $\mu_L$ (Lemma \ref{measure:lem-Wasserstein}), and a quantitative version of the Skorokhod representation theorem (Proposition \ref{sk:prop-main}). 

\subsection{\protect{The Hairer-Steele argument}}\label{section:hairer-steele} 

As explained above, the first step in our argument relies on the Hairer-Steele method from \cite{HS22}. In fact, since the $\Phi^4_3$-measure considered in \cite{HS22} is much more singular than the $\Phi^{p+1}_1$-measure considered in this article, the technical aspects of the proof will be much simpler than in \cite{HS22}. 

\begin{proposition}[Hairer-Steele estimate]\label{measure:prop-moments}
Let $p>1$ and let $0<\beta<\frac{1}{p+1}$. Then, it holds that 
\begin{equation}\label{measure:eq-moments}
\sup_{L\geq 10} \int \exp \Big( \beta \| \phi \|_{L^{p+1}([-1,1])}^{p+1} \Big) \mathrm{d}\mu_L(\phi) \lesssim_{\beta,p} 1.
\end{equation}
\end{proposition}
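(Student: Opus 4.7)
The plan is to implement the Hairer--Steele stochastic quantization method, which is considerably easier in this 1D setting than in the $\Phi^4_3$ setting of \cite{HS22} because no renormalization is required. I would run the $\mu_L$-invariant Langevin dynamics
\[
\partial_t \Phi + (1-\partial_{xx})\Phi + |\Phi|^{p-1}\Phi = \sqrt{2}\,\xi
\]
on $\T_L$, where $\xi$ is space-time white noise, in its stationary regime so that $\Phi(t)\sim\mu_L$ for every $t\geq 0$. I would then couple $\Phi$ to the Ornstein--Uhlenbeck process $\partial_t Z + (1-\partial_{xx})Z = \sqrt{2}\,\xi$ driven by the same noise, with the joint $(\Phi,Z)$ started from the joint stationary law so that $Z(t)\sim\cg_L$ as well. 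The difference $v := \Phi - Z$ then solves the noise-free equation $\partial_t v + (1-\partial_{xx})v = -|\Phi|^{p-1}\Phi$, which is pathwise deterministic once $\Phi$ is given.

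The key analytic step is a weighted energy estimate. Choose $\eta(x):=e^{-\delta\langle x\rangle}$ with $\delta>0$ small enough that $|\eta''|\leq\delta^2\eta$ pointwise. Multiplying the $v$-equation by $\eta v$, integrating over $\T_L$, performing integration by parts in $x$, and absorbing the $\eta''$ term thanks to the smallness of $\delta$, produces
\[
\tfrac{1}{2}\tfrac{d}{dt}\int\eta v^2 \dx + \int\eta\bigl(|\partial_x v|^2 + \tfrac{1}{2}v^2\bigr)\dx + \int\eta v\,|\Phi|^{p-1}\Phi\dx \leq 0.
\]
Substituting $v = \Phi - Z$ in the nonlinearity and applying Young's inequality $|Z|\,|\Phi|^p \leq \kappa|\Phi|^{p+1} + C_\kappa|Z|^{p+1}$ for arbitrarily small $\kappa>0$ gives the pathwise bound
\[
\tfrac{d}{dt}\int\eta v^2\dx + (1-\kappa)\int\eta|\Phi|^{p+1}\dx \leq C_\kappa \int\eta|Z|^{p+1}\dx.
\]
To convert this into a pointwise-in-time exponential moment bound, I would integrate against a non-negative test function $\chi(t)$ vanishing at the endpoints of a short time interval, convert the time derivative into a bounded multiple of $\sup_t\int\eta v^2\dx$ via integration by parts, and then use joint stationarity of $(\Phi,Z)$ to identify the expectation of the resulting left-hand side with $\mathbb{E}_{\mu_L}[\int\eta|\phi|^{p+1}\dx]$. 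Carrying out the same procedure at the level of exponentials, combined with Gaussian concentration for the functional $\int\eta|Z|^{p+1}\dx$, should yield the desired finite exponential moment at any $\beta<\frac{1}{p+1}$. Uniformity in $L\geq 10$ is automatic because both $\eta$ and the covariance of the stationary $Z$ are $L$-independent.

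\textbf{Main obstacle.} The main technical difficulty is upgrading the time-integrated pathwise energy estimate---the natural output of the parabolic method---to an exponential moment bound at a single time slice $t=0$, hitting the sharp critical exponent $\beta<\frac{1}{p+1}$ dictated by the $\exp(-\frac{1}{p+1}\int|\phi|^{p+1})$ factor in the density of $\mu_L$. This requires precise calibration of the constant $\kappa$ in Young's inequality and of the test function $\chi$, combined with a delicate Gaussian-chaos concentration argument for $\int\eta|Z|^{p+1}\dx$, whose super-Gaussian $(p+1)$-power structure makes sharp exponential moments nontrivial. Misaligning any of these constants would cost the critical threshold, which is precisely the point at which Bourgain's direct Brascamp--Lieb argument breaks down.
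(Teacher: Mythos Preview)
Your approach has a genuine gap at the step you flag as the main obstacle, and it is not merely ``nontrivial''---it fails outright. The Gaussian concentration you would need for $\int\eta|Z|^{p+1}\dx$ does not hold: for $p>1$ one has $\mathbb{E}_{\cg_L}\bigl[\exp\bigl(\beta\int_{-1}^1|Z|^{p+1}\dx\bigr)\bigr]=\infty$ for every $\beta>0$. Indeed, a Cameron--Martin shift of $Z$ to a constant level $s$ on $[-1,1]$ has probabilistic cost $\sim e^{-cs^2}$ while producing $\int|Z|^{p+1}\gtrsim s^{p+1}$, so $\mathbb{P}\bigl(\int|Z|^{p+1}>t\bigr)\gtrsim e^{-Ct^{2/(p+1)}}$; since $2/(p+1)<1$, the exponential moment diverges. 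Your energy estimate therefore yields at best polynomial moments of $\int\eta|\Phi|^{p+1}$ under $\mu_L$, not the exponential moment \eqref{measure:eq-moments}. More generally, any argument that bounds $\|\phi\|_{L^{p+1}}^{p+1}$ pathwise by a polynomial of the linear Gaussian object and then exponentiates hits exactly this wall.

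The actual Hairer--Steele trick, which is what the paper implements, sidesteps this entirely by \emph{never} asking for exponential moments of a Gaussian functional. One introduces the tilted measure $\mathrm{d}\nu_L\propto\exp\bigl(\beta\|\phi\|^{p+1}_{L^{p+1}([-1,1])}\bigr)\mathrm{d}\mu_L$, whose normalization constant $\mathcal{Z}_L$ is precisely the integral in \eqref{measure:eq-moments}. One then runs the $\nu_L$-invariant Langevin dynamics---whose nonlinearity remains coercive because $\beta(p+1)<1$---and uses the parabolic maximum principle to obtain the coming-down-from-infinity bound $\|\psi_L(1)\|_{L^\infty([-1,1])}\leq C\bigl(1+\|\linear[L]\|_{L^\infty}\bigr)$, \emph{uniformly in the initial data}. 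By invariance this gives tightness: $\nu_L\bigl(\{\|\phi\|_{L^\infty([-1,1])}\leq\lambda\}\bigr)\geq\tfrac12$ for some $\lambda$ independent of $L$. The punchline is the elementary inequality
\[
\tfrac12\;\leq\;\nu_L\bigl(\{\|\phi\|_{L^\infty([-1,1])}\leq\lambda\}\bigr)\;=\;\mathcal{Z}_L^{-1}\int_{\{\|\phi\|_{L^\infty}\leq\lambda\}}e^{\beta\|\phi\|^{p+1}_{L^{p+1}}}\,\mathrm{d}\mu_L\;\leq\;\mathcal{Z}_L^{-1}e^{2\beta\lambda^{p+1}},
\]
which gives $\mathcal{Z}_L\leq 2e^{2\beta\lambda^{p+1}}$ directly. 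The threshold $\beta<\tfrac{1}{p+1}$ enters only through coercivity of the modified potential, not through any Gaussian tail estimate. What you describe is closer to a Mourrat--Weber style energy method, which is not the mechanism in \cite{HS22}.
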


Using the definition of the Gibbs measure $\mu_L$, it is easy to see that the integral in \eqref{measure:eq-moments} is finite for any fixed $L\geq 10$. The important aspect of  \eqref{measure:eq-moments} is that the integral can be bounded uniformly in~$L$, which is non-trivial. We first define a probability measure $\nu_L$ by 
\begin{equation}\label{measure:eq-nu-L}
\mathrm{d}\nu_L (\phi) = \mathcal{Z}_L^{-1} 
\exp \Big( \beta \| \phi \|_{L^{p+1}([-1,1])}^{p+1} \Big) \mathrm{d}\mu_L(\phi),
\end{equation}
where $\mathcal{Z}_L$ is the normalization constant. By definition, it then follows that  
\begin{equation}\label{measure:eq-moment-ZL}
\mathcal{Z}_L = \int \exp \Big( \beta \| \phi \|_{L^{p+1}([-1,1])}^{p+1} \Big) \mathrm{d}\mu_L. 
\end{equation}
In order to obtain Proposition \ref{measure:prop-moments}, we therefore have to obtain an upper bound on $\mathcal{Z}_L$. To this end, we note that $\nu_L$ is the Gibbs measure corresponding to the energy 
\begin{equation}\label{measure:eq-modified-energy}
 \int_{\T_L} \bigg( \frac{|\phi|^2}{2} + \frac{|\nabla \phi|^2}{2} + \frac{|\phi|^{p+1}}{p+1} \bigg) \dx - \beta \int_{\T_L} \ind_{[-1,1]} |\phi|^{p+1} \dx. 
\end{equation}
The Langevin equation corresponding to the energy \eqref{measure:eq-modified-energy}, which leaves the measure $\nu_L$ invariant, is given by the nonlinear stochastic heat equation 
\begin{equation}\label{measure:eq-psiL}
\begin{cases}
\begin{aligned}
(\partial_t + 1 - \Delta ) \psi_L  &= - \big( 1-\beta  (p+1) \ind_{[-1,1]}\big) |\psi_L|^{p-1} \psi_L + \sqrt{2} \zeta_L, \\ 
\psi_L(0)&= \psi_L^{(0)}.
\end{aligned}
\end{cases}
\end{equation}
In \eqref{measure:eq-psiL}, $\zeta_L$ is an $2\pi L$-periodic, complex-valued, space-time white noise. 
To state our estimates for $\psi_L$, we also introduce the linear stochastic object $\linear[L]$, which is defined as the solution of 
\begin{equation}\label{measure:eq-linear-L}
\begin{cases}
\begin{aligned}
(\partial_t + 1 - \Delta ) \linear[L]  &=  \sqrt{2} \zeta_L, \\ 
\linear[L](0)&=0. 
\end{aligned}
\end{cases}
\end{equation}

\begin{lemma}[Pointwise estimates of $\psi_L$]\label{measure:lem-pointwise}
Let $p>1$, let $0<\beta<\frac{1}{p+1}$, and let  $C=C_{\beta,p}\geq 1$ be a sufficiently large constant depending only on $\beta$ and $p$. For all $L\geq 10$, it then holds that\footnote{As will be clear below, the linear dependence of the right-hand side in \eqref{measure:eq-pointwise} on $\linear[L]$ is unimportant.}
\begin{equation}\label{measure:eq-pointwise}
\big\| \psi_L \big\|_{L_{t,x}^\infty([\frac{1}{2},1]\times [-1,1])} \leq C \Big( 1 + \big\| \linear[L] \big\|_{L^\infty_{t,x}([0,1]\times [-2,2])}\Big).
\end{equation}
\end{lemma}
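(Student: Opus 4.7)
The plan is to use a Da Prato--Debussche-type decomposition $\psi_L = \linear[L] + v_L$ and then exploit the dissipative structure of the nonlinearity via a parabolic comparison with an explicit barrier that ``comes down from infinity''. Setting $v_L := \psi_L - \linear[L]$, one sees that $v_L$ solves the pathwise-deterministic equation
\begin{equation*}
(\partial_t + 1 - \Delta) v_L \;=\; -\alpha(x)\,\bigl|\linear[L] + v_L\bigr|^{p-1}\bigl(\linear[L] + v_L\bigr),
\qquad v_L(0)=\psi_L^{(0)},
\end{equation*}
where $\alpha(x) := 1 - \beta(p+1)\ind_{[-1,1]}(x)$. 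The key feature is that the assumption $\beta < \tfrac{1}{p+1}$ makes this coefficient uniformly positive, $\alpha(x) \geq \alpha_0 := 1 - \beta(p+1) > 0$, so the equation for $v_L$ has a uniformly dissipative nonlinearity with the stochastic forcing already absorbed into $\linear[L]$.

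Set $u := |v_L|^2$ and $M := \bigl\|\linear[L]\bigr\|_{L^\infty_{t,x}([0,1]\times[-2,2])}$. Taking $\mathrm{Re}(\bar v_L \,\cdot\,)$ of the equation above and using $\Delta|v|^2 = 2|\nabla v|^2 + 2\mathrm{Re}(\bar v \Delta v)$ yields a scalar pointwise inequality. The nonlinear term is estimated by splitting into the cases $|v_L| \geq 2|\linear[L]|$, in which $|\linear[L]+v_L| \geq |v_L|/2$, and $|v_L| < 2|\linear[L]|$, followed by Young's inequality; this gives
\begin{equation*}
\alpha(x)\,\mathrm{Re}\!\bigl(\bar v_L\,\bigl|\linear[L]+v_L\bigr|^{p-1}(\linear[L]+v_L)\bigr) \;\geq\; c_p\,|v_L|^{p+1} - C_p\,|\linear[L]|^{p+1}
\end{equation*}
for constants $c_p, C_p > 0$ depending only on $p$ and $\alpha_0$. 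Discarding the good terms $-2|v_L|^2 - 2|\nabla v_L|^2$, one obtains
\begin{equation*}
(\partial_t - \Delta)\,u \;\leq\; -c_p\,u^{(p+1)/2} + C_p\,M^{p+1}
\qquad \text{on } (0,1)\times(-2,2).
\end{equation*}

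The remaining step is a parabolic comparison with the super-solution
\begin{equation*}
\Phi(t,x) \;:=\; A\,t^{-2/(p-1)} + B\,\bigl((2-x)^{-4/(p-1)} + (2+x)^{-4/(p-1)}\bigr),
\end{equation*}
with $A = c(1+M^2)$ and $B$ a large constant depending only on $p$ and $\beta$. The time-exponent $2/(p-1)$ matches the self-similar blow-down profile of the ODE $\dot w = -c_p w^{(p+1)/2}$, and the spatial exponent $4/(p-1)$ is the smallest power for which $-g'' + \tfrac{c_p}{2}g^{(p+1)/2} \geq 0$ near $x = \pm 2$. These matching scales make $\Phi$ a super-solution of the above differential inequality, with $\Phi \to +\infty$ on the parabolic boundary $\{0\}\times[-2,2]\,\cup\,[0,1]\times\{-2,2\}$. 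The comparison principle then gives $u \leq \Phi$ on $(0,1)\times(-2,2)$, and restricting to $[\tfrac{1}{2},1]\times[-1,1]$ yields $|v_L| \leq C(1+M)$. Combined with the triangle inequality $|\psi_L| \leq |v_L| + |\linear[L]|$, this closes the proof.

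The main technical obstacle I anticipate is the rigorous justification of the pointwise differential inequality and of the parabolic comparison at points where $v_L$ vanishes, since $u = |v_L|^2$ is only $C^{1}$ in the complex variable. This should be handled by working with $u_\epsilon := |v_L|^2 + \epsilon$ and sending $\epsilon \to 0$, or by invoking a Kato-type inequality for $|v_L|$. A secondary check is that the spatial barrier, taken as a \emph{sum} of the two shifted singularities, is smooth and convex on $(-2,2)$, so that $-g''$ produces no spurious negative delta function at the origin; this is the reason for preferring it over the naive choice $B(2-|x|)^{-4/(p-1)}$.
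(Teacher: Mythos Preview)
Your proof is correct and takes essentially the same route as the paper: both decompose $\psi_L$ into the linear object plus a remainder $v_L$, derive the inequality $(\partial_t - \Delta)|v_L|^2 \leq -c|v_L|^{p+1} + C(1+|\linear[L]|)^{p+1}$ from the uniform positivity of $1-\beta(p+1)$, and then apply a coming-down-from-infinity maximum principle (the paper cites Moinat--Weber \cite{MW20-companion,MW20} as a black box, whereas you build the barrier by hand). Your anticipated regularity obstacle is a non-issue: $u = |v_L|^2 = v_L\overline{v_L}$ is as smooth in $(t,x)$ as $v_L$ itself, with no difficulty at $v_L = 0$ (you may be thinking of $|v_L|$).
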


We emphasize that the estimate \eqref{measure:eq-pointwise} is uniform both in the size $L$ and the initial data $\psi_L^{(0)}$, which will be used heavily in the proof of Proposition \ref{measure:prop-moments}. The estimate \eqref{measure:eq-pointwise} is proven using a variant of the arguments from \cite{MW20-companion,MW20}.

\begin{proof}
In the following argument, we use $C=C_{\beta,p}\geq 1$ and  $c=c_{\beta,p}>0$ for sufficiently large or small constants, respectively. The precise values of $C$ and $c$ are left unspecified and can change from line to line.\\

In order to estimate $\psi_L$, we introduce the nonlinear remainder $\varphi_L:= \psi_L- \linear[L]$, which solves

\begin{equation}\label{measure:eq-pointwise-p1}
\begin{cases}
\begin{aligned}
(\partial_t + 1 - \Delta ) \varphi_L  &= - \big( 1-\beta  (p+1) \ind_{[-1,1]}\big) \big| \, \linear[L]+\,\varphi_L \big|^{p-1} \big( \linear[L] + \varphi_L \big), \\ 
\varphi_L(0)&= \psi_L^{(0)}.
\end{aligned}
\end{cases}
\end{equation}
Since $\linear[L]$ can clearly be bounded using the right-hand side of \eqref{measure:eq-pointwise}, it suffices to prove the estimate in \eqref{measure:eq-pointwise} with $\psi_L$ replaced by $\varphi_L$. Since $\varphi_L$ is complex-valued, however, it is difficult to directly work with $\varphi_L$, and we instead work with $\chi_L :=|\varphi_L|^2$. From \eqref{measure:eq-pointwise-p1}, it then follows that  
\begin{align*}
(\partial_t - \Delta) \chi_L 
&= - 2 |\nabla \varphi_L|^2  + 2 \Re \big( \widebar{\varphi_L} \big( \partial_t - \Delta \big) \varphi_L \big) \\
&=  - |\nabla \varphi_L|^2 -2 |\varphi_L|^2 - 2  \big( 1-\beta  (p+1) \ind_{[-1,1]}\big) \Re \Big(  \widebar{\varphi_L} \big| \linear[L]+\varphi_L \big|^{p-1} \big( \linear[L] + \varphi_L \big) \Big).
\end{align*}

Using $-|\nabla \varphi_L|^2 \leq 0$,  $-|\varphi_L|^2 \leq 0$, $\beta (p+1)<1$, and Young's inequality, we obtain that 
\begin{equation}\label{measure:eq-pointwise-p2}
(\partial_t - \Delta) \chi_L \leq - c |\varphi_L|^{p+1} +  C \big( 1 + \big| \,\linear[L] \big|\big)^{p+1} = - c \chi_L^{\frac{p+1}{2}} + C \big( 1 + \big| \,\linear[L] \big|\big)^{p+1}.
\end{equation}
Using the maximum principle from \cite[Theorem 4.2]{MW20-companion} or \cite[Lemma 2.7]{MW20}, it then follows that 
\begin{equation*}
\big\| \chi_L \big\|_{L^\infty_{t,x}([\frac{1}{2},1]\times [-1,1])}
\leq C \big( 1+ \big\| \linear[L] \big\|_{L^\infty_{t,x}([0,1]\times [-2,2])}^2\big).
\end{equation*}
Using the definition of $\chi_L$, we then obtain that 
\begin{equation*}
\big\| \varphi_L \big\|_{L^\infty_{t,x}([\frac{1}{2},1]\times [-1,1])}
\leq C \big( 1+ \big\| \linear[L] \big\|_{L^\infty_{t,x}([0,1]\times [-2,2])}\big),
\end{equation*}
which completes the proof.
\end{proof}

\begin{lemma}\label{measure:lem-tightness}
Let $p> 1$, let $0<\beta<\frac{1}{p+1}$, and let $\lambda=\lambda_{\beta,p}$ be a sufficiently large constant depending only on $\beta$ and $p$. Then, it holds that 
\begin{equation}\label{measure:eq-tightness}
\inf_{L\geq 10}\nu_L \Big( \Big\{  \,  \| \phi \|_{L^\infty([-1,1])} \leq \lambda \Big\} \Big) \geq \frac{1}{2}.
\end{equation}
\end{lemma}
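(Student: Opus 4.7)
The plan is to exploit the invariance of $\nu_L$ under the Langevin dynamics \eqref{measure:eq-psiL} in combination with the pathwise pointwise estimate from Lemma \ref{measure:lem-pointwise}. Since in one spatial dimension the $2\pi L$-periodic Gaussian free field is function-valued and the nonlinearity in the energy \eqref{measure:eq-modified-energy} is defined pointwise without renormalization, the SPDE \eqref{measure:eq-psiL} is classically well-posed and admits $\nu_L$ as a stationary distribution; I take this as a standard fact, which can be verified, if needed, by Galerkin approximation and a passage to the limit.

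Concretely, I would draw an initial datum $\psi_L^{(0)} \sim \nu_L$ independently of the space-time white noise $\zeta_L$ and let $\psi_L$ solve \eqref{measure:eq-psiL}. By stationarity, $\psi_L(1) \sim \nu_L$, so
\begin{equation*}
\nu_L\big(\{\| \phi \|_{L^\infty([-1,1])} > \lambda\}\big) = \bP\big(\| \psi_L(1) \|_{L^\infty([-1,1])} > \lambda\big).
\end{equation*}
By Lemma \ref{measure:lem-pointwise}, which yields a bound that is uniform in both $L$ and $\psi_L^{(0)}$, the right-hand side is dominated by
\begin{equation*}
\bP\Big(\big\| \linear[L] \big\|_{L^\infty_{t,x}([0,1]\times[-2,2])} > \lambda/C - 1\Big),
\end{equation*}
so everything reduces to a tail estimate for the linear stochastic object.

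It remains to show that this last probability is at most $1/2$ for some $\lambda=\lambda_{\beta,p}$ sufficiently large, uniformly in $L$. The process $\linear[L]$ is a centered Gaussian field whose covariance can be computed explicitly and is uniformly bounded on $[0,1]\times[-2,2]$, independently of $L\ge 10$; a standard Kolmogorov continuity argument in the spirit of Lemma \ref{prelim:lem-kolmogorov}, applied to Gaussian increments whose moments are controlled by hypercontractivity, yields
\begin{equation*}
\sup_{L \geq 10} \E\Big[\big\| \linear[L] \big\|_{L^\infty_{t,x}([0,1]\times[-2,2])}\Big] \lesssim_{\beta,p} 1.
\end{equation*}
Markov's inequality then produces the desired $\lambda$, completing the argument. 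The main conceptual point, and what makes the whole approach work, is the uniformity in the initial data in Lemma \ref{measure:lem-pointwise}: without it, invoking stationarity would merely transfer the problem back onto itself; with it, the bound on $\psi_L(1)$ depends only on the driving noise and not on $\psi_L^{(0)}$, so a Gaussian tail estimate suffices.
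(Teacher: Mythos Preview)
Your proposal is correct and follows essentially the same approach as the paper: draw $\psi_L^{(0)}\sim\nu_L$ independently of the noise, use invariance of $\nu_L$ under the Langevin dynamics to replace $\psi_L^{(0)}$ by $\psi_L(1)$, apply the uniform-in-initial-data pointwise bound from Lemma~\ref{measure:lem-pointwise}, and conclude via a uniform first-moment bound on $\linear[L]$ together with Markov's inequality. The only cosmetic difference is that the paper applies Markov directly to $\|\psi_L(1)\|_{L^\infty}$ (yielding $\tfrac{C}{\lambda}(1+\E\|\linear[L]\|_{L^\infty})$), whereas you first pass to a tail of $\linear[L]$ and then apply Markov; also note that the bound on $\E\|\linear[L]\|_{L^\infty_{t,x}([0,1]\times[-2,2])}$ is absolute and does not actually depend on $\beta$ or $p$.
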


\begin{proof} Throughout the argument, we work on an abstract probability space $(\Omega,\mathcal{F},\mathbb{P})$. For each $L\geq 10$, we let $\psi_L^{(0)}$ be a random function whose law equals $\nu_L$ and let $\zeta_L$ be an $2\pi L$-periodic, complex-valued space-time white noise which is independent of $\psi_L^{(0)}$. Furthermore, we let $\psi_L$ be the corresponding solution of \eqref{measure:eq-psiL}.
Using the invariance of $\nu_L$ under \eqref{measure:eq-psiL}, it then follows that 
\begin{align*}
    \nu_L \Big( \Big\{ \,  \| \phi \|_{L^\infty([-1,1])} > \lambda \Big\} \Big) 
    = \mathbb{P} \Big( \big\| \psi_L^{(0)} \big\|_{L^\infty([-1,1])} > \lambda \Big) 
    =\mathbb{P} \Big( \big\| \psi_L(1)\big\|_{L^\infty([-1,1])}> \lambda \Big). 
\end{align*}
Using Lemma \ref{measure:lem-pointwise} and Markov's inequality\footnote{One can obtain much better decay in $\lambda$ using higher moments of $\linear[L]$, but this is irrelevant for our argument.}, we further obtain that
\begin{align*}
    \mathbb{P} \Big( \big\| \psi_L(1)\big\|_{L^\infty([-1,1])}>\lambda \Big) &\leq \mathbb{P}\Big( C \big( 1+\big\| \, \linear[L] \big\|_{L^\infty([-1,1]\times [-2,2])}\big) > \lambda \Big) \\
    &\leq \frac{C}{\lambda} \Big( 1 + \mathbb{E}\big[\big\| \, \linear[L] \big\|_{L^\infty([-1,1]\times [-2,2])} \big] \Big).
\end{align*}
Using standard estimates for the linear stochastic object $\linear[L]$ (see e.g. \cite[Section 5]{MW17} or \cite[Proposition 2.4]{GHOZ24}), it holds that 
\begin{equation*}
\sup_{L\geq 10}\mathbb{E}\big[\big\| \, \linear[L] \big\|_{L^\infty([-1,1]\times [-2,2])} \big] \lesssim 1.
\end{equation*}
As a result, it follows that
\begin{equation*}
\sup_{L\geq 10}\nu_L \Big( \Big\{ \,   \| \phi \|_{L^\infty([-1,1])} > \lambda \Big\} \Big) 
\lesssim_{\beta,p} \frac{1}{\lambda}.
\end{equation*}
By choosing $\lambda$ sufficiently large, we then obtain the desired estimate.
\end{proof}

Equipped with Lemma \ref{measure:lem-tightness}, we can now prove Proposition \ref{measure:prop-moments}. 

\begin{proof}[Proof of Proposition \ref{measure:prop-moments}:] 
We let $\lambda = \lambda_{\beta,p}$ be as in Lemma \ref{measure:lem-tightness}.
To simplify the notation, set 
\begin{equation*}
\mathcal{K}_L := \Big\{  \, \big\| \phi \big\|_{L^\infty([-1,1])} \leq \lambda \Big\}. 
\end{equation*}
Using \eqref{measure:eq-nu-L} and Lemma \ref{measure:lem-tightness}, we then obtain that 
\begin{align*}
\frac{1}{2} \leq \nu_L \big( \mathcal{K}_L \big) 
&= \mathcal{Z}_L^{-1} \int_{\mathcal{K}_L} \exp \Big( \beta \| \psi_L \|_{L^{p+1}([-1,1])}^{p+1}\Big) \mathrm{d}\mu_L(\psi_L) \\
&\leq  \mathcal{Z}_L^{-1} \int_{\mathcal{K}_L} \exp \Big( \beta \lambda^{p+1}\Big) \mathrm{d}\mu_L(\psi_L) \\
&\leq \mathcal{Z}_L^{-1} \exp(\beta \lambda^{p+1}). 
\end{align*}
In the last inequality, we used that $\mu_L$ is a probability measure, which implies that $\mu_L(\mathcal{K}_L)\leq 1$. By rearranging the above estimate, we obtain that 
\begin{equation*}
\mathcal{Z}_L \leq 2 \exp(\beta \lambda^{p+1}). 
\end{equation*}
Together with \eqref{measure:eq-moment-ZL}, this implies the desired estimate \eqref{measure:eq-moments}.
\end{proof}

We now record a simple corollary of Proposition \ref{measure:prop-moments}, which will be used to control minor error terms below (see e.g. the proofs of Lemma \ref{measure:lem-moments-maximum} and  Lemma \ref{measure:lem-Gaussian}).

\begin{corollary}\label{measure:cor-crude}
Let $p>1$ and let $C=C_p$ and $c=c_p$ be sufficiently large and small constants, respectively. Then, it holds for all $\lambda>0$ that 
\begin{equation}
\sup_{L\geq 10} \mu_L \Big( \Big\{  \big\|  \langle  x \rangle^{-10} \phi \big\|_{L^1(\R)} > C \lambda^{\frac{1}{p+1}} \Big\} \Big) \leq  C e^{-c\lambda}. 
\end{equation}
\end{corollary}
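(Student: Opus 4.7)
My plan is to derive the tail estimate from Proposition \ref{measure:prop-moments} via Markov's inequality, after first establishing a uniform exponential integrability bound for $\|\langle x\rangle^{-10}\phi\|_{L^1(\R)}^{p+1}$. Fix $\beta \in (0, 1/(p+1))$ as in Proposition \ref{measure:prop-moments}, and partition $\R$ into the unit intervals $I_n := [n, n+1]$, $n\in\Z$. Since $\langle x \rangle \sim \langle n\rangle$ on $I_n$ and Hölder's inequality on the unit-length interval yields $\|\phi\|_{L^1(I_n)} \leq \|\phi\|_{L^{p+1}(I_n)}$, I obtain
\[
\big\|\langle x\rangle^{-10}\phi\big\|_{L^1(\R)} \lesssim \sum_{n\in\Z} \langle n\rangle^{-10} \|\phi\|_{L^{p+1}(I_n)}.
\]

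Now let $Z := \sum_n \langle n\rangle^{-10} < \infty$ and $\theta_n := Z^{-1}\langle n\rangle^{-10}$, so that $(\theta_n)_{n\in\Z}$ is a probability distribution on $\Z$. Two successive applications of Jensen's inequality (first to the convex map $x\mapsto x^{p+1}$ on $[0,\infty)$, then to $\exp$) give, for some constant $B = B(p) > 0$ and every $c_0 > 0$,
\[
\exp\!\Big(c_0 \big\|\langle x\rangle^{-10}\phi\big\|_{L^1(\R)}^{p+1}\Big) \leq \sum_{n\in\Z} \theta_n \exp\!\Big( B c_0\, \|\phi\|_{L^{p+1}(I_n)}^{p+1}\Big).
\]
I would then choose $c_0 = c_0(\beta,p)$ small enough that $B c_0 \leq \beta$. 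Integrating against $\mu_L$ and invoking the translation invariance of $\mu_L$ on $\T_L$---which is applicable because $|I_n|=1 \leq 2\pi L$ for $L\geq 10$, so each $I_n$ is a translate (modulo $2\pi L$) of $[0,1]\subset [-1,1]$---each right-hand term coincides in distribution with $\exp(\beta\|\phi\|_{L^{p+1}([0,1])}^{p+1})$ and is uniformly bounded in $L$ by Proposition \ref{measure:prop-moments}. Since $\sum_n \theta_n = 1$, this yields
\[
\sup_{L\geq 10} \int \exp\!\Big(c_0 \big\|\langle x\rangle^{-10}\phi\big\|_{L^1(\R)}^{p+1}\Big)\, \mathrm{d}\mu_L(\phi) \leq K(\beta, p).
\]

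Markov's inequality applied to $\|\langle x\rangle^{-10}\phi\|_{L^1(\R)}^{p+1}$ then immediately produces the desired bound of the form $C e^{-c\lambda}$, with $c = c_0$ and $C = \max(K,1)$. I do not expect any substantive obstacle: the only step requiring a small amount of care is the translation-invariance argument, which amounts to the standard fact that the $2\pi L$-periodic Gibbs measure $\mu_L$ is invariant under spatial shifts of its samples viewed as $2\pi L$-periodic functions on $\R$.
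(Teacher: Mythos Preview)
Your proof is correct and follows essentially the same approach as the paper: reduce to a uniform exponential moment bound via Markov's inequality, and establish that bound by Jensen's inequality (with a probability weight built from $\langle x\rangle^{-10}$) together with translation invariance and Proposition~\ref{measure:prop-moments}. The paper applies Jensen with the continuous probability measure $Z^{-1}\langle x\rangle^{-10}\,\mathrm{d}x$ on $\R$ (landing on pointwise values $|\phi(x)|^{p+1}$) rather than your discrete weights $\theta_n$ on unit intervals (landing on $\|\phi\|_{L^{p+1}(I_n)}^{p+1}$), but the structure is otherwise identical, and your discretized version has the minor advantage of matching the $L^{p+1}$-norm in Proposition~\ref{measure:prop-moments} directly.
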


\begin{proof}
We choose the constant $Z>0$ such that  $Z^{-1} \langle x \rangle^{-10} \mathrm{d}x$ is a probability measure on $\R$. Using Jensen's inequality, Tonelli's theorem, and Proposition \ref{measure:prop-moments}, it then follows that 
\begin{align*}
\int \exp\Big( c \big\| \langle x \rangle^{-10} \phi \|_{L^1(\R)}^{p+1} \Big) \mathrm{d}\mu_L(\phi) 
&\leq  \int \bigg( Z^{-1} \int \langle x \rangle^{-10}  \exp\big( c Z^{p+1} |\phi(x)|^{p+1} \big) \mathrm{d}x \bigg) \mathrm{d}\mu_L(\phi) \\
&= Z^{-1} \int  \langle x \rangle^{-10} \bigg( \int \exp\big( c Z^{p+1} |\phi(x)|^{p+1} \big) \mathrm{d}\mu_L(\phi)  \bigg) \mathrm{d}x\\ 
&\lesssim Z^{-1} \int \langle x \rangle^{-10} \mathrm{d}x \lesssim 1.
\end{align*}
Together with Markov's inequality, this implies the desired estimate.
\end{proof}

We now use Proposition \ref{measure:prop-moments}, together with Bernstein's inequality, translation-invariance, and maximum tail inequalities, to prove that 
\begin{equation*}
\big\| P_{<N} \phi \big\|_{L^\infty([-R,R])} \lesssim N^{\frac{1}{p+1}} \log(R)^{\frac{1}{2}}
\end{equation*}
holds with high probability.

\begin{lemma}\label{measure:lem-moments-maximum}
Let $C=C_p\geq 1$  and $c=c_p>0$ be sufficiently large and small constants, respectively. 
Furthermore, let $p>1$, let $N\geq 1$, let $R\geq 10$, and let $\lambda >0$. Then, it holds that 
\begin{equation}\label{measure:eq-moments-maximum}
\sup_{L\geq 10} \mu_L \Big( \Big\{ \big\| P_{< N} \phi \big\|_{L^\infty([-R,R])} > C N^{\frac{1}{p+1}} \big( \log(R)+\lambda\big)^{\frac{1}{p+1}} \Big\} \Big) 
\leq C e^{-c\lambda}.
\end{equation}
\end{lemma}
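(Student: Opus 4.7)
The plan is to combine Proposition \ref{measure:prop-moments} (which controls the $L^{p+1}$-norm on unit intervals), translation invariance of $\mu_L$, a Bernstein-type inequality (to pass to $L^\infty$), and Lemma \ref{prelim:lem-maximum-tail} (to pay a logarithmic price for the sup over $[-R,R]$).

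Fix $\beta=\beta(p)\in(0,\tfrac{1}{p+1})$. Proposition \ref{measure:prop-moments} gives
\[
\sup_{L\geq 10}\int\exp\!\big(\beta\,\|\phi\|_{L^{p+1}([-1,1])}^{p+1}\big)\,d\mu_L(\phi)\lesssim_p 1,
\]
and, viewing samples as $2\pi L$-periodic functions on $\R$, the translation invariance of $\mu_L$ transfers this bound to every unit interval $[y-1,y+1]$ with $y\in\R$. Markov's inequality then yields, uniformly in $y\in\R$, $L\geq 10$, and $\lambda\geq 0$,
\[
\mu_L\!\Big(\|\phi\|_{L^{p+1}([y-1,y+1])}^{p+1}\geq\beta^{-1}\lambda\Big)\leq C_p\,e^{-\lambda}.
\]
Taking the integer grid $\{y_j\}_{j=1}^J\subseteq[-12R,12R]\cap\Z$, which has $J\lesssim R$ elements and covers every $[y-1,y+1]$ with $|y|\leq 11R$ by $O(1)$ grid intervals, Lemma \ref{prelim:lem-maximum-tail} (applied with $q=1$, $A=0$, $\gamma=1$) upgrades this pointwise tail to the maximum tail
\[
\sup_{L\geq 10}\mu_L\!\Big(\sup_{|y|\leq 11R}\|\phi\|_{L^{p+1}([y-1,y+1])}\geq C_p(\log R+\lambda)^{\frac{1}{p+1}}\Big)\lesssim_p e^{-\lambda}.
\]

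To convert this $L^{p+1}$-control into $L^\infty$-control of $P_{<N}\phi$, I will establish the local Bernstein-type bound
\[
\|P_{<N}\phi\|_{L^\infty([-R,R])}\lesssim_D N^{\frac{1}{p+1}}\!\!\sup_{|y|\leq 11R}\|\phi\|_{L^{p+1}([y-1,y+1])}+(RN)^{-D}\|\langle x\rangle^{-D}\phi\|_{L^1(\R)}
\]
for every $D\geq 1$. This follows either by applying Lemma \ref{prelim:lem-bernstein} with $(p,q)=(p+1,\infty)$ and then removing the outer $P_{<N}$ from the local $L^{p+1}$-norm via a unit-scale cutoff, or more directly by writing $P_{<N}\phi$ as a convolution with its kernel $\widecheck{\rho}_{<N}$ and splitting the $y$-integration into the regions $|y-x|\leq 1$ (where Hölder together with the scaling $\|\widecheck{\rho}_{<N}\|_{L^{(p+1)/p}}\sim N^{\frac{1}{p+1}}$ produces the main term), $1<|y-x|\leq 10R$ (dyadic decomposition, absorbed into the main term by the Schwartz decay of $\widecheck{\rho}_{<N}$), and $|y-x|>10R$ (where the kernel decay is matched against the weight $\langle y\rangle^{-D}$ using $|y|\sim|y-x|\gtrsim R$ to produce the $(RN)^{-D}$ factor).

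Finally, Corollary \ref{measure:cor-crude} gives $\|\langle x\rangle^{-10}\phi\|_{L^1(\R)}\leq C_p\lambda^{\frac{1}{p+1}}$ off an event of $\mu_L$-measure $\lesssim e^{-\lambda}$. Choosing $D\geq 10$ and using $R\geq 10$, $N\geq 1$, the error term $(RN)^{-D}\lambda^{\frac{1}{p+1}}$ is subordinate to the main term $N^{\frac{1}{p+1}}(\log R+\lambda)^{\frac{1}{p+1}}$, and a union bound over the two exceptional events yields \eqref{measure:eq-moments-maximum}, with $C$ and $c$ depending only on $p$. The main obstacle is the Bernstein-type estimate with $\phi$ rather than $P_{<N}\phi$ on the right-hand side; this is a routine cutoff-plus-mismatch argument, but one must keep the tail error in the form $(RN)^{-D}\|\langle x\rangle^{-D}\phi\|_{L^1}$ so that Corollary \ref{measure:cor-crude} applies cleanly. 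Everything else is a straightforward chain of Markov, the maximum tail lemma, and the weighted $L^1$-moment bound.
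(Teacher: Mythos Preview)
Your proposal is correct and follows essentially the same approach as the paper: apply the local Bernstein inequality (Lemma~\ref{prelim:lem-bernstein}) to reduce to controlling $\|\phi\|_{L^{p+1}_{\loc}}$ on $[-O(R),O(R)]$ plus a weighted $L^1$ error, use Proposition~\ref{measure:prop-moments} with translation invariance and Markov for pointwise tail bounds on unit intervals, then Lemma~\ref{prelim:lem-maximum-tail} over an integer grid of cardinality $\sim R$ for the maximum, and Corollary~\ref{measure:cor-crude} for the error term. The only cosmetic differences are the order in which you combine the steps and your slightly larger spatial window; your remark about needing to remove $P_{<N}$ from the right-hand side of Lemma~\ref{prelim:lem-bernstein} is apt (the paper absorbs this step silently), and your suggested cutoff-plus-mismatch argument handles it.
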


\begin{proof}
We first let $C^\prime=C^\prime_p$ be sufficiently large depending on $p$ and then let $C=C_{p,C^\prime}$ be sufficiently large depending on $p$ and $C^\prime$. Furthermore, we let $c^\prime:= 1/C^\prime$ and $c=1/C$.

From Bernstein's inequality (Lemma \ref{prelim:lem-bernstein}), it follows that 
\begin{equation*}
\big\| P_{<N} \phi \big\|_{L^\infty([-R,R])} 
\lesssim N^{\frac{1}{p+1}} \big\| \phi \big\|_{L^{p+1}_{\loc}([-2R,2R])} 
+ (RN)^{-10} \big\| \langle x \rangle^{-10} \phi \big\|_{L^1(\R)}.
\end{equation*}
In order to obtain \eqref{measure:eq-moments-maximum}, it therefore suffices to prove that 
\begin{align}
\sup_{L\geq 10} \mu_L \Big( \Big\{ \big\| \phi \big\|_{L^{p+1}_{\loc}([-2R,2R])} \geq C^\prime \big( \log(R)+\lambda\big)^{\frac{1}{p+1}} \Big\} \Big) 
&\leq C^\prime e^{-c^\prime \lambda}, \label{measure:eq-moments-maximum-p1} \\
\sup_{L\geq 10} \mu_L \Big( \Big\{  \big\| \langle x \rangle^{-10} \phi \big\|_{L^1(\R)}  \geq C^\prime \lambda^{\frac{1}{p+1}} \Big\}  \Big) 
&\leq C^\prime e^{-c^\prime \lambda}. \label{measure:eq-moments-maximum-p2} 
\end{align}
The second estimate \eqref{measure:eq-moments-maximum-p2} follows directly from Corollary \ref{measure:cor-crude}, and it therefore remains to prove the first estimate \eqref{measure:eq-moments-maximum-p1}. To this end, we choose any $\beta=\beta_p$ satisfying $0<\beta<1/(p+1)$. From Proposition \ref{measure:prop-moments} and translation-invariance, it then follows that 
\begin{equation}\label{measure:eq-moments-maximum-p3}
\sup_{L\geq 10} \sup_{x_0 \in \R} \int e^{\beta \| \phi \|_{L^{p+1}([x_0-1,x_0+1])}^{p+1}} \mathrm{d}\mu_L(\phi) \lesssim_p 1.
\end{equation}
Together with Markov's inequality, \eqref{measure:eq-moments-maximum-p3} then implies for all $\lambda >0$ that
\begin{equation}\label{measure:eq-moments-maximum-p3p}
\sup_{L\geq 10} \sup_{x_0 \in \R} \mu_L \Big( \Big\{ \big\| \phi \big\|_{L_x^{p+1}([x_0-1,x_0+1])} \geq \beta^{-\frac{1}{p+1}} \lambda^{\frac{1}{p+1}}\Big\}  \Big)  \lesssim_p e^{-\lambda}.
\end{equation}
We now let $\Lambda_R:= [-2R,2R] \medcap \Z$. From this, it follows that each interval $I\subseteq [-2R,2R]$ of length $|I|\leq 2$ can be covered using at most two intervals of the form $[x_0-1,x_0+1]$, where $x_0\in \Lambda_R$. From the triangle inequality and \eqref{prelim:eq-Lploc}, we then obtain that 
\begin{equation}\label{measure:eq-moments-maximum-p4}
\big\| \phi \big\|_{L^{p+1}_{\loc}([-2R,2R])} \leq 2 \max_{x_0 \in \Lambda_R} \big \| \phi \big\|_{L^{p+1}([x_0-1,x_0+1])}.
\end{equation}
By combining the maximum tail estimate (Lemma \ref{prelim:lem-maximum-tail}), \eqref{measure:eq-moments-maximum-p3p}, and \eqref{measure:eq-moments-maximum-p4}, we then obtain that
\begin{align*}
&\, \sup_{L\geq 10} \mu_L \Big( \Big\{ \big\| \phi \big\|_{L_{\loc}^{p+1}([-2R,2R])} \geq 2 \beta^{-\frac{1}{p+1}} \big( \log(\# \Lambda_R) +\lambda\big)^{\frac{1}{p+1}}  \Big\} \Big)\\
\leq &\, \sup_{L\geq 10} \mu_L \Big( \Big\{ \max_{x_0 \in \Lambda_R} \big \| \phi \big\|_{L^{p+1}([x_0-1,x_0+1])} \geq  \beta^{-\frac{1}{p+1}} \big( \log(\# \Lambda_R) +\lambda\big)^{\frac{1}{p+1}}  \Big\} \Big) \lesssim_p e^{-\lambda}.
\end{align*}
Since $\#\Lambda_R \sim R$, this implies \eqref{measure:eq-moments-maximum-p1}, and thereby completes the proof of \eqref{measure:eq-moments-maximum}.
\end{proof}

At the end of this subsection, we record a simple corollary of Lemma \ref{measure:lem-pointwise}, which will be useful in the proofs of Lemma \ref{measure:lem-density} and Lemma \ref{measure:lem-Wasserstein} below.

\begin{corollary}\label{measure:cor-Ltx-bound}
Let $p>1$. Let $C=C_p \geq 1$ and $c=c_p>0$ be sufficiently large and small constants, respectively. Let $L\geq 10$, let $\psi_L^{(0)}$ be drawn from the Gibbs measure $\mu_L$, let $\zeta_L$ be a $2\pi L$-periodic space-time white noise, and assume that $\psi_L^{(0)}$ and $\zeta_L$ are independent. Furthermore, let $\psi_L$ be the solution of \eqref{measure:eq-psiL} with $\beta=0$. Then, it holds for all $T\geq 1$, $R\geq 10$, and $\lambda>0$ that 
\begin{equation}\label{measure:eq-Ltx-bound}
\bP\Big( \big\| \psi_L \big\|_{L_t^\infty L_x^\infty([0,T]\times [-R,R])} \geq C \big(\log(T+R)+\lambda\big)^{\frac{1}{2}} \Big) \leq C e^{-c\lambda}.
\end{equation}
\end{corollary}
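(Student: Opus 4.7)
The plan is to reduce the space-time $L^\infty$ bound on $[0,T]\times[-R,R]$ to the unit-cube estimate from Lemma \ref{measure:lem-pointwise}, via invariance, translation, and a union bound. The key observation is that with $\beta = 0$ the measure $\nu_L$ of \eqref{measure:eq-nu-L} coincides with $\mu_L$, so \eqref{measure:eq-psiL} is the Langevin dynamics leaving $\mu_L$ invariant; in particular, $\operatorname{Law}(\psi_L(t)) = \mu_L$ for every $t \geq 0$.

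First, since $\linear[L]$ is a centered Gaussian space-time field whose covariance is bounded uniformly in $L \geq 10$ on compact sets, standard Gaussian tail bounds (e.g.\ via Kolmogorov's continuity theorem, Lemma \ref{prelim:lem-kolmogorov}, combined with Fernique's theorem or direct hypercontractivity) yield
\begin{equation*}
\sup_{L \geq 10} \bP\Big( \big\| \linear[L] \big\|_{L^\infty_{t,x}([0,1]\times[-2,2])} > \lambda^{1/2} \Big) \lesssim e^{-c\lambda}, \qquad \lambda > 0.
\end{equation*}
Inserting this into the bound of Lemma \ref{measure:lem-pointwise} yields the unit-cube estimate
\begin{equation*}
\sup_{L \geq 10} \bP\Big( \big\| \psi_L \big\|_{L^\infty_{t,x}([1/2,1]\times[-1,1])} > C(1+\lambda)^{1/2} \Big) \lesssim e^{-c\lambda}.
\end{equation*}
By the Markov property and invariance of $\mu_L$, for any $s \geq 0$ the restriction of $\psi_L$ to $[s, s+1]$ has the same law as the solution of \eqref{measure:eq-psiL} started at time $s$ from $\mu_L$, driven by an independent copy of $\zeta_L$; combined with spatial translation invariance, this promotes the unit-cube bound to arbitrary cubes $[s + 1/2, s+1] \times [x_0 - 1, x_0 + 1]$ with $s \geq 0$, $x_0 \in \R$. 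To cover the initial time slab $[0, 1/2]$, I replace $\psi_L$ by its extension to $[-1, T]$ with initial datum at $t = -1$ drawn from $\mu_L$, which by invariance has the same law on $[0, T]\times\R$; this brings $[0, 1/2]$ into scope via the cube indexed by $s = -1/2$.

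Finally, covering $[0, T] \times [-R, R]$ by $N \lesssim TR$ such cubes and applying the maximum tail estimate (Lemma \ref{prelim:lem-maximum-tail}) gives
\begin{equation*}
\sup_{L \geq 10} \bP\Big( \big\| \psi_L \big\|_{L^\infty_{t,x}([0,T]\times[-R,R])} > C \big( 1 + \log(TR) + \lambda \big)^{1/2} \Big) \lesssim e^{-c\lambda}.
\end{equation*}
Since $T, R \geq 10$ implies $\log(TR) \lesssim \log(T + R)$, this is \eqref{measure:eq-Ltx-bound} after relabeling constants. The only technical point requiring care is the uniform-in-$L$ sub-Gaussian tail for $\linear[L]$, but this is routine: $\linear[L]$ is the stationary solution (away from $t = 0$) of a linear SHE on $\T_L$ with explicit heat-kernel covariance, which is uniformly controlled on $[0,1]\times [-2,2]$ as $L \to \infty$.
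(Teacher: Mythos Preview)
Your proof is correct and follows essentially the same approach as the paper: reduce to the unit space-time cube via space-time translation invariance of the law of $\psi_L$ (which comes from invariance of $\mu_L$ under the Langevin dynamics together with spatial translation invariance of $\mu_L$ and $\zeta_L$), then apply Lemma~\ref{measure:lem-pointwise} and standard sub-Gaussian bounds for $\linear[L]$, and finally use the maximum tail estimate (Lemma~\ref{prelim:lem-maximum-tail}) to pass from a single cube to the $\lesssim TR$ cubes covering $[0,T]\times[-R,R]$. Your explicit handling of the initial slab $[0,\tfrac12]$ by extending the process backward in time is a correct way to make precise what the paper simply phrases as ``the law of $\psi_L$ is invariant under space-time translations.''
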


\begin{remark} We note that the exponent $1/2$ in \eqref{measure:eq-Ltx-bound} can later be improved by using Theorem \ref{intro:thm-measure} and a similar argument as in the proof of Proposition \ref{uniform:prop-linfty}. For our purposes, however, Corollary~\ref{measure:cor-Ltx-bound} is sufficient. 
\end{remark}

\begin{proof}[Proof of Corollary \ref{measure:cor-Ltx-bound}:]
Due to the invariance of the Gibbs measure $\mu_L$ under the Langevin dynamics and the invariance of the Gibbs measure and space-time white noise under spatial translations, the law of $\psi_L$ is invariant under space-time translations. Due to Lemma \ref{prelim:lem-maximum-tail}, it then suffices to prove that
\begin{equation}\label{measure:eq-Ltx-bound-1}
\bP\Big( \big\| \psi_L \big\|_{L_t^\infty L_x^\infty([\frac{1}{2},1]\times [-1,1])} \geq C \lambda^{\frac{1}{2}} \Big) \leq C e^{-c\lambda}.
\end{equation}
The estimate \eqref{measure:eq-Ltx-bound-1} follows directly from Lemma \ref{measure:lem-pointwise} and standard estimates for the linear stochastic object $\linear[L]$.
\end{proof}

\subsection{Gaussian estimates}\label{section:Gaussian}

In this subsection, we control the Gibbs measure $\mu_L$ using the Gaussian free field $\cg_L$, which leads to the following lemma.

\begin{lemma}[Gaussian estimate]\label{measure:lem-Gaussian} Let $p>1$. Let $C=C_p\geq 1$  and $c=c_p>0$ be sufficiently large and small constants, respectively. 
Furthermore, let $N\geq 1$, let $R\geq 10$, and let $\lambda >0$. Then, it holds that 
\begin{equation}\label{measure:eq-Gaussian-estimate}
\sup_{L\geq 10} \mu_L \Big( \Big\{ \big\| P_{\geq N} \phi \big\|_{L^\infty([-R,R])} > C N^{-\frac{1}{2}} \big( \log(RN)+\lambda\big)^{\frac{1}{2}} \Big\} \Big) 
\leq C e^{-c\lambda}.
\end{equation}
\end{lemma}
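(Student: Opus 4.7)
The plan is to combine a Brascamp-Lieb comparison of $\mu_L$ and $\cg_L$ with Gaussian tail bounds for each Littlewood-Paley piece $P_M\phi(x_0)$, and then upgrade these pointwise bounds to an $L^\infty([-R,R])$-bound via a grid argument before summing over dyadic $M \geq N$.

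More precisely, I would first decompose $P_{\geq N}\phi = \sum_{M \geq N,\, M \in \dyadic} P_M\phi$ and reduce \eqref{measure:eq-Gaussian-estimate} to the scale-by-scale estimate
\begin{equation*}
\sup_{L \geq 10}\mu_L\Bigl(\bigl\{\|P_M\phi\|_{L^\infty([-R,R])} > C' M^{-1/2}(\log(RM) + \lambda)^{1/2}\bigr\}\Bigr) \leq C' e^{-c'\lambda}
\end{equation*}
for every dyadic $M \geq 1$. Summing over $M \geq N$ after shifting $\lambda$ by $\log(M/N)$ recovers \eqref{measure:eq-Gaussian-estimate}, because $\sum_{M \geq N,\, M \in \dyadic} M^{-1/2}(\log(RM) + \log(M/N) + \lambda)^{1/2} \lesssim N^{-1/2}(\log(RN) + \lambda)^{1/2}$ and the corresponding sum of probabilities remains geometric in $k = \log_2(M/N)$.

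To prove the scale-by-scale bound I would fix $x_0 \in [-R,R]$ and exploit two convexity facts. First, $\phi \mapsto |P_M\phi(x_0)|^2$ is convex in $(\Re\phi, \Im\phi)$ as a sum of squares of real-linear functionals, so $\phi \mapsto e^{\beta|P_M\phi(x_0)|^2}$ is a convex, $\phi \to -\phi$-symmetric functional. Second, the density $d\mu_L/d\cg_L \propto \exp\bigl(-\tfrac{1}{p+1}\int_{\T_L}|\phi|^{p+1}\dx\bigr)$ is log-concave because $(\Re\phi, \Im\phi) \mapsto |\phi|^{p+1}$ is convex for $p \geq 0$, and both $\mu_L$ and $\cg_L$ are $\phi \mapsto -\phi$-symmetric. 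The Brascamp-Lieb inequality (Lemma \ref{measure:lem-brascamp-lieb}) then yields
\begin{equation*}
\E_{\mu_L}\bigl[e^{\beta|P_M\phi(x_0)|^2}\bigr] \leq \E_{\cg_L}\bigl[e^{\beta|P_M\phi(x_0)|^2}\bigr]
\end{equation*}
for all $\beta \geq 0$. Under $\cg_L$ the random variable $P_M\phi(x_0)$ is a centered complex Gaussian with variance $\sim M^{-1}$, uniformly in $L \geq 10$ (by a direct computation from \eqref{intro:eq-GFF-rigorous}), so choosing $\beta = c_p M$ with $c_p$ sufficiently small bounds the right-hand side by an $L$-independent constant. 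Markov's inequality then gives the pointwise tail bound $\mu_L(|P_M\phi(x_0)| \geq M^{-1/2}\lambda^{1/2}) \lesssim_p e^{-c\lambda}$.

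The $L^\infty([-R,R])$-bound then follows from Lemma \ref{prelim:lem-local-constancy}, which replaces $\|P_M\phi\|_{L^\infty([-R,R])}$ by a maximum over a grid $\Lambda_{R,M} \subseteq [-R,R]$ of cardinality $\lesssim (RM)^{2D+1}$, modulo an error $\lesssim (RM)^{-D}\|\langle x\rangle^{-D}\phi\|_{L^1(\R)}$ which Corollary \ref{measure:cor-crude} handles. A union bound via Lemma \ref{prelim:lem-maximum-tail} (applied with $q = 2$ and $\gamma = c_p$) inserts the required $\log(RM)$ factor. The main obstacle I foresee is rigorously justifying the Brascamp-Lieb step in the infinite-dimensional, complex-valued setting; in practice I would pass through a finite Fourier truncation of $\phi$, verify convexity of both the potential and the exponentiated squared modulus on the resulting finite-dimensional space, and then pass to the limit, taking care that the Gaussian variance bound $\sim M^{-1}$ is genuinely uniform in $L \geq 10$.
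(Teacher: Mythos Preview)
Your proposal is correct and follows essentially the same route as the paper: reduce $P_{\geq N}$ to a single dyadic piece $P_M$, pass from $L^\infty([-R,R])$ to a finite grid via Lemma~\ref{prelim:lem-local-constancy} (with Corollary~\ref{measure:cor-crude} handling the tail), apply the maximum tail inequality (Lemma~\ref{prelim:lem-maximum-tail}), use Brascamp--Lieb to transfer the exponential moment from $\mu_L$ to $\cg_L$, and compute the Gaussian variance $\sim M^{-1}$ directly from \eqref{intro:eq-GFF-rigorous}. The only cosmetic difference is that you shift $\lambda$ additively by $\log(M/N)$ to sum over $M$, whereas the paper rescales multiplicatively by $(M/N)^\delta$; both give summable probabilities and a convergent norm sum, and your version in fact avoids the paper's preliminary restriction to $e^{-c\lambda}\leq C^{-1}$ for that step.
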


The main ingredient used in the proof is an estimate of Brascamp and Lieb \cite[Theorem 5.1]{BL76}, which has already been used 
to study nonlinear Schrödinger equations in \cite{B00}. For the reader's convenience, we state it as a separate lemma below.

\begin{lemma}[Brascamp-Lieb]\label{measure:lem-brascamp-lieb}
Let $n\geq 1$, let $V\colon \R^n\rightarrow \R$ be an even, convex function, and let $A\in \R^{n\times n}$ be a positive definite matrix. Define the Gibbs measure $\mu$ and Gaussian measure $\cg$ by 
\begin{equation*}
\mathrm{d}\mu(\phi) = \mathcal{Z}_1^{-1} \exp\Big( - \langle \phi , A \phi \rangle - V(\phi) \Big) \mathrm{d}\phi 
\qquad \text{and} \qquad
 \mathrm{d}\cg(\phi)=\mathcal{Z}_2^{-1} \exp\Big( - \langle \phi , A \phi \rangle\Big) \mathrm{d}\phi,
\end{equation*}
where $\mathcal{Z}_1$ and $\mathcal{Z}_2$ are normalization constants and $\mathrm{d}\phi$ is the Lebesgue measure on $\R^n$. For any linear function $f\colon \R^n \rightarrow \R$ and $q\geq 1$, it then holds that
\begin{equation}\label{measure:eq-Brascamp-Lieb-1}
\int \big| f(\phi) \big|^q \mathrm{d}\mu(\phi) \leq \int \big| f(\phi)\big|^q \mathrm{d}\cg(\phi).
\end{equation}
Furthermore, for any $\beta>0$, it also holds that 
\begin{equation}\label{measure:eq-Brascamp-Lieb-2}
\int \exp\big( \beta |f(\phi)|^2 \big) \mathrm{d}\mu(\phi) \leq \int \exp\big( \beta |f(\phi)|^2 \big)\mathrm{d}\cg(\phi).
\end{equation}
\end{lemma}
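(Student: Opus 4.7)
Both inequalities are instances of the comparison
\[
\int F(\phi)\,\mathrm{d}\mu(\phi) \leq \int F(\phi)\,\mathrm{d}\cg(\phi)
\]
for any even convex function $F\colon \R^n \to \R$ making both sides finite. Indeed, for linear $f$, the function $F_1(\phi) = |f(\phi)|^q$ with $q\geq 1$ is even and convex (since $|f|$ is convex as the absolute value of a linear function, and $t\mapsto t^q$ is convex non-decreasing on $[0,\infty)$), and similarly $F_2(\phi) = \exp(\beta |f(\phi)|^2)$ is even convex (since $|f|^2$ is convex and $\exp$ is convex increasing). The plan is to prove this general even-convex comparison. To this end, I would introduce the one-parameter interpolation
\[
\mathrm{d}\mu_t(\phi) = \mathcal{Z}_t^{-1} \exp\bigl(-\langle \phi, A\phi\rangle - tV(\phi)\bigr)\,\mathrm{d}\phi, \qquad t \in [0,1],
\]
so that $\mu_0 = \cg$, $\mu_1 = \mu$, and each $\mu_t$ is a symmetric log-concave probability measure (its potential $\langle \phi, A\phi\rangle + tV(\phi)$ is both even and convex).

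\textbf{Reduction to a correlation inequality.} Differentiating $G(t) := \int F\,\mathrm{d}\mu_t$ in $t$ yields
\[
G'(t) = -\int F V\,\mathrm{d}\mu_t + \Bigl(\int F\,\mathrm{d}\mu_t\Bigr)\Bigl(\int V\,\mathrm{d}\mu_t\Bigr) = -\mathrm{Cov}_{\mu_t}(F,V).
\]
Hence $G(1)\leq G(0)$ as soon as $\mathrm{Cov}_{\mu_t}(F,V)\geq 0$ for every $t \in [0,1]$. The task therefore reduces to showing that any two even convex functions are positively correlated under any symmetric log-concave probability measure on $\R^n$.

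\textbf{Correlation inequality by induction on dimension.} I would prove the correlation statement by induction on $n$. For $n=1$, every even convex function on $\R$ is non-decreasing in $|x|$, so pushing a symmetric measure forward under $x \mapsto |x|$ turns both $F$ and $V$ into non-decreasing functions on $[0,\infty)$; the one-dimensional Chebyshev/FKG rearrangement inequality then gives the non-negative covariance. For the inductive step from $n-1$ to $n$, I would integrate out the last coordinate $\phi_n$. The Pr\'ekopa--Leindler inequality, applied to the jointly even convex potential of $\mu_t$, guarantees that the marginal of $\mu_t$ on $(\phi_1,\ldots,\phi_{n-1})$ remains symmetric log-concave, and that the conditional law of $\phi_n$ given the other variables is symmetric log-concave on $\R$. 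Applying the law of total covariance,
\[
\mathrm{Cov}_{\mu_t}(F,V) = \mathbb{E}\bigl[\mathrm{Cov}(F,V \,|\, \phi_1,\ldots,\phi_{n-1})\bigr] + \mathrm{Cov}\bigl(\mathbb{E}[F\,|\,\cdot], \mathbb{E}[V\,|\,\cdot]\bigr),
\]
the first term is handled by the one-dimensional base case (applied fiberwise), and the second by the inductive hypothesis in dimension $n-1$.

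\textbf{Main obstacle.} The delicate step is verifying that the conditional expectations $\phi' \mapsto \mathbb{E}_{\mu_t}[F \mid \phi_1,\ldots,\phi_{n-1}=\phi']$ and likewise for $V$ remain even convex in $\phi'$, since the inductive hypothesis requires even convex inputs. Evenness is immediate from the symmetry of the density, but convexity in the conditioning variables is a parametric form of Pr\'ekopa--Leindler (log-concavity of Laplace-type transforms along linear tilts of a log-concave density). This preservation is the technical core of \cite{BL76}. Once it is in hand, the induction closes, $G$ is non-increasing on $[0,1]$, and the comparisons for $|f(\phi)|^q$ and $\exp(\beta|f(\phi)|^2)$ in \eqref{measure:eq-Brascamp-Lieb-1} and \eqref{measure:eq-Brascamp-Lieb-2} both follow.
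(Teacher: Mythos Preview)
The paper does not prove this lemma from scratch: it cites \cite[Theorem 5.1]{BL76} directly, observes that the evenness of $V$ forces the mean of $f$ under $\mu$ to vanish (so that the centered moment in \cite{BL76} coincides with the raw moment in \eqref{measure:eq-Brascamp-Lieb-1}), and obtains \eqref{measure:eq-Brascamp-Lieb-2} from \eqref{measure:eq-Brascamp-Lieb-1} by expanding the exponential into a power series. Your proposal, by contrast, attempts an independent proof via interpolation, and here there is a genuine gap.

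The problem is the reduction target. You reduce everything to the claim that \emph{any two even convex functions are positively correlated under any symmetric log-concave probability measure on $\R^n$}. This is false. Take $n=2$, the density proportional to $e^{-(x^2+y^2)^2}$ (the potential $(x^2+y^2)^2$ is even and convex, so the measure is symmetric log-concave), and the even convex functions $F=x^2$, $V=y^2$. Passing to polar coordinates and substituting $s=r^2$, the law of $s$ has density proportional to $e^{-s^2}$ on $(0,\infty)$, from which one computes $\mathbb{E}[r^2]=1/\sqrt{\pi}$, $\mathbb{E}[r^4]=1/2$, and hence
\[
\mathrm{Cov}(x^2,y^2)=\tfrac{1}{8}\,\mathbb{E}[r^4]-\big(\tfrac{1}{2}\,\mathbb{E}[r^2]\big)^2=\tfrac{1}{16}-\tfrac{1}{4\pi}<0.
\]
So the general correlation inequality you aim for cannot hold, and in particular is not ``the technical core of \cite{BL76}''. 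Correspondingly, the inductive step you outline breaks: the conditional expectation of an even convex function with respect to a symmetric log-concave conditional law need not be convex (in the same example, $x\mapsto \mathbb{E}[y^2\mid x]$ is even but strictly decreasing in $|x|$), so the inductive hypothesis cannot be invoked.

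The interpolation $G'(t)=-\mathrm{Cov}_{\mu_t}(F,V)$ is a perfectly reasonable opening move, but the covariance you actually need is the special one in which $V$ is simultaneously the interpolating potential and $F=|f|^q$ depends on a single linear coordinate; it is this structure, not a general even-convex correlation principle, that must be exploited. The argument in \cite{BL76} proceeds differently (via the Brascamp--Lieb covariance/Poincar\'e inequality $\mathrm{Var}_\mu(g)\leq \mathbb{E}_\mu\langle\nabla g,(\mathrm{Hess}\,W)^{-1}\nabla g\rangle$ and its higher-moment analogues), and that machinery is what is being cited in the paper.
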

In \cite[Theorem 5.1]{BL76}, the left-hand side of \eqref{measure:eq-Brascamp-Lieb-1} also involves the mean of $f$ with respect to~$\mu$. However, since we made the additional assumption that $V$ is even, the mean equals zero. We also note that while \eqref{measure:eq-Brascamp-Lieb-2} is not stated as part of \cite[Theorem 5.1]{BL76}, it follows directly from~\eqref{measure:eq-Brascamp-Lieb-1} and an expansion of the exponential into a power series.

\begin{proof}[Proof of Lemma \ref{measure:lem-Gaussian}:] 
We choose constants $(C_j)_{j=0}^4$ satisfying 
\begin{align*}
C_0 \gg C_1 \gg C_2 \gg C_3 \gg C_4
\end{align*}
and then define $(c_j)_{j=0}^4$ as $c_j := C_j^{-1}$. Furthermore, we define $C$ and $c$ from the statement of the lemma as $C:= C_0$ and $c:= c_0$. In the rest of the proof, we assume that $e^{-c \lambda} \leq C^{-1}$, since otherwise the desired estimate is trivial.
For expository purposes, we separate the proof into five steps. \\

\emph{Step 1: Reduction to $P_N$ instead of $P_{\geq N}$.} To simplify the notation, we let 
\begin{align*}
E_{N,\lambda} &:= \Big\{ \big\| P_{\geq N} \phi \big\|_{L^\infty([-R,R])} \leq C N^{-\frac{1}{2}} \big( \log(RN)+\lambda \big)^{\frac{1}{2}} \Big\}, \\ 
\widetilde{E}_{N,\lambda} &:= \Big\{ \big\| P_N \phi \big\|_{L^\infty([-R,R])} \leq C_1 N^{-\frac{1}{2}} \big( \log(RN)+\lambda \big)^{\frac{1}{2}} \Big\}. 
\end{align*}
We now claim that, in order to obtain \eqref{measure:eq-Gaussian-estimate}, it suffices to prove that 
\begin{equation}\label{measure:eq-Gaussian-p1}
\sup_{L\geq 10} \mu_L \big(\widetilde{E}_{N,\lambda}^c \big) \leq C_1 e^{-c_1 \lambda}.
\end{equation}
To see this, we let $0<\delta<1$ and consider the event $\medcap_{M\geq N} \widetilde{E}_{M,(M/N)^\delta \lambda}$. On this event, it holds that 
\begin{align*}
\| P_{\geq N} \phi \|_{L^\infty([-R,R])} 
&\leq \sum_{M\geq N} \| P_M \phi \|_{L^\infty([-R,R])} \\  
&\leq C_1 \sum_{M\geq N} M^{-\frac{1}{2}} \Big( \log(RM) + (M/N)^\delta \lambda \Big)^{\frac{1}{2}} \\
&\leq C N^{-\frac{1}{2}} \Big( \log(RN)+ \lambda \Big)^{\frac{1}{2}}.
\end{align*}
As a result, it holds that $E_{N,\lambda} \subseteq \medcap_{M\geq N} \widetilde{E}_{M,(M/N)^\delta \lambda}$. Furthermore, from \eqref{measure:eq-Gaussian-p1} we have the probability estimate
\begin{equation*}
\sup_{L\geq 10} \mu_L \Big(  \Big( \bigcap_{M\geq N} \widetilde{E}_{M, (M/N)^\delta \lambda} \Big)^c \Big)
\leq C_1 \sum_{M\geq N} e^{-c_1 \lambda (M/N)^\delta } \leq C e^{-c \lambda}.
\end{equation*}
In estimating the sum over $M$, we used that $e^{-c\lambda} \leq C^{-1} \leq 2^{-1}$. \\

\emph{Step 2: From supremum to maximum.} Let $\Lambda_{R,N}\subseteq [-R,R]$ be a grid with step-size~$\sim (RN)^{-100}$. From Lemma \ref{prelim:lem-local-constancy}, it then follows that 
\begin{equation*}
\big\| P_N \phi \big\|_{L^\infty([-R,R])} \leq \max_{x\in \Lambda_{R,N}} \big| P_N \phi(x) \big| + (RN)^{-10} \big\| \langle x \rangle^{-10} \phi \big\|_{L^1(\R)}. 
\end{equation*}
Due to Corollary \ref{measure:cor-crude} and due to our earlier restriction to large values of $\lambda$, it holds that 
\begin{equation*}
\sup_{L\geq 10} \mu_L \Big( \Big\{ \big\| \langle x \rangle^{-10} \phi \big\|_{L^1(\R)} \geq C_2 \lambda^{\frac{1}{2}}\Big\}  \Big)
\leq \sup_{L\geq 10} \mu_L \Big( \Big\{ \big\| \langle x \rangle^{-10} \phi \big\|_{L^1(\R)} \geq C_2 \lambda^{\frac{1}{p+1}}\Big\}  \Big)
\leq C_2 e^{-c_2 \lambda}.
\end{equation*}
In order to obtain \eqref{measure:eq-Gaussian-p1}, it therefore suffices to prove that 
\begin{equation}\label{measure:eq-Gaussian-p2}
\sup_{L \geq 10} \mu_L \Big( \Big\{  \max_{x\in \Lambda_{R,N}} \big| P_N \phi(x) \big| \geq C_2 N^{-\frac{1}{2}} \big( \log(RN)+\lambda \big)^{\frac{1}{2}} \Big\} \Big)
\leq C_2 e^{-c_2 \lambda}.
\end{equation}

\emph{Step 3: Application of the maximum tail estimate.} Using Lemma \ref{prelim:lem-maximum-tail}, the maximum tail estimate~\eqref{measure:eq-Gaussian-p2} can be further reduced to the moment estimate 
\begin{equation}\label{measure:eq-Gaussian-p3}
\sup_{L\geq 10} \max_{x\in \Lambda_{R,N}} \int e^{c_3 N |P_N \phi(x)|^2} \mathrm{d}\mu_L(\phi) \leq C_3. 
\end{equation}
Due to the translation-invariance of $\mu_L$, the maximum over $x\in \Lambda_{R,N}$ is not needed, i.e., it suffices to estimate
\begin{equation}\label{measure:eq-Gaussian-p4}
\sup_{L\geq 10}  \int e^{c_3 N |P_N \phi(0)|^2} \mathrm{d}\mu_L(\phi) \leq C_3. 
\end{equation}

\emph{Step 4: Using the Brascamp-Lieb inequality.}  We now make use of the Brascamp-Lieb inequality. From \eqref{measure:eq-Gaussian-p4}, it follows that
\begin{equation*}
 \int e^{c_3 N |P_N \phi(0)|^2} \mathrm{d}\mu_L(\phi) \leq  \int e^{c_3 N |P_N \phi(0)|^2} \mathrm{d}\cg_L(\phi),
\end{equation*}
where $\cg_L$ is the Gaussian free field from \eqref{intro:eq-GFF-rigorous}. Since $\cg_L$ is a Gaussian measure, it then suffices to bound the variance of $P_N\phi(0)$ with respect to $\cg_L$, i.e., it suffices to prove that 
\begin{equation}\label{measure:eq-Gaussian-p5}
\int \big| P_N \phi(0) |^2 d\cg_L(\phi) \leq C_4 N^{-1}.
\end{equation}

\emph{Step 5: Estimate of variance under $\cg_L$.} 
We prove the remaining estimate \eqref{measure:eq-Gaussian-p5} using frequency-space methods. However, we mention that it can also be proven using physical-space methods, i.e., by working with the kernel of $(1-\Delta)^{-1}$. We let $(\Omega,\mathcal{F},\bP)$ be an abstract probability space and let $(g_n)_{n\in \Z_L}$ be a sequence of independent standard Gaussians, where $\Z_L:= L^{-1} \Z$. Since $( (2\pi L)^{-\frac{1}{2}} e^{inx})_{n\in \Z_L}$ is an orthonormal eigenbasis of $1-\Delta$ on $L^2(\T_L)$ with eigenvalues $\langle n \rangle^2$, it then holds that
\begin{equation*}
\operatorname{Law}_{\cg_L} \Big( P_N \phi(x) \Big) = \operatorname{Law}_{\bP} \bigg( \frac{1}{\sqrt{2\pi L}} \sum_{n\in \Z_L} \frac{\rho_N(n)}{\langle n \rangle} g_n e^{inx} \bigg),
\end{equation*}
where $\rho_N$ is the Littlewood-Paley symbol from \eqref{prelim:eq-rho}. From this, we obtain
\begin{equation}\label{measure:eq-Gaussian-p6}
\begin{aligned}
&\, \int \big| P_N \phi(0) |^2 d\cg_L(\phi)  
= \frac{1}{2\pi L} \E \bigg[ \Big| \sum_{n\in \Z_L} \frac{\rho_N(n)}{\langle n \rangle} g_n \Big|^2 \bigg] \\ 
=&\,  \frac{1}{2\pi L} \sum_{n \in \Z_L} \frac{\rho_N(n)^2}{\langle n \rangle^2} 
\lesssim \frac{1}{LN^2} \, \# \{ n \in \Z_L \colon |n| \sim N \}  \lesssim N^{-1}.
\end{aligned}
\end{equation}
This completes the proof of \eqref{measure:eq-Gaussian-p5}, and hence the proof of this lemma.
\end{proof}

We now record the following corollary of Lemma \ref{measure:lem-Gaussian} and its proof, which will be needed in Section~\ref{section:uniform} below.

\begin{corollary}\label{measure:cor-Gaussian}
For all $0\leq \beta<1$, $r\geq 1$, and $N\geq 1$, it holds that 
\begin{align}
\sup_{L\geq 10} \Big( \int \big\| P_N \phi \big\|_{C_x^\beta([-1,1])}^r \mathrm{d}\mu_L(\phi) \Bigg)^{\frac{1}{r}}
&\lesssim \sqrt{r} N^{-\frac{1}{2}+\beta} \big( \log(N)+1\big)^{\frac{1}{2}}, \label{measure:eq-Gaussian-cor-e1} \\
\sup_{L\geq 10} \Big( \int \big\| \Delta P_N \phi \big\|_{C_x^\beta([-1,1])}^r \mathrm{d}\mu_L(\phi) \Bigg)^{\frac{1}{r}}
&\lesssim \sqrt{r} N^{\frac{3}{2}+\beta} \big( \log(N)+1\big)^{\frac{1}{2}}. \label{measure:eq-Gaussian-cor-e2} 
\end{align}
\end{corollary}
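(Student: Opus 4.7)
My plan is to combine the Brascamp-Lieb inequality (Lemma \ref{measure:lem-brascamp-lieb}) with Kolmogorov's continuity theorem (Lemma \ref{prelim:lem-kolmogorov}) and the explicit frequency-space variance computation from Step~5 in the proof of Lemma \ref{measure:lem-Gaussian}. I focus on \eqref{measure:eq-Gaussian-cor-e1}; the estimate \eqref{measure:eq-Gaussian-cor-e2} will follow from the same argument with an extra factor of $|n|^4 \sim N^4$ inside the variance sum, producing the additional $N^2$ in the final bound.

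For fixed $x,y \in [-1,1]$ and $r \geq 1$, the map $\phi \mapsto P_N \phi(x) - P_N \phi(y)$ is a linear functional of $\phi$ (after separating real and imaginary parts), so Lemma \ref{measure:lem-brascamp-lieb} reduces its $L^r(\mu_L)$-moment to the corresponding $L^r(\cg_L)$-moment. Under $\cg_L$ this quantity is a mean-zero complex Gaussian and, reasoning as in Step~5 of the proof of Lemma~\ref{measure:lem-Gaussian},
\begin{equation*}
\int \big| P_N \phi(x) - P_N \phi(y) \big|^2 \, \mathrm{d}\cg_L(\phi) \lesssim \frac{1}{L} \sum_{n \in \Z_L} \frac{\rho_N(n)^2 \, |e^{inx} - e^{iny}|^2}{\langle n \rangle^2} \lesssim \min\bigl( N |x-y|^2, N^{-1} \bigr),
\end{equation*}
which I interpolate as $N^{-1+2\gamma} |x-y|^{2\gamma}$ for any $\gamma \in [0,1]$. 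Combined with the Gaussian moment bound $\| Z \|_{L^r} \lesssim \sqrt{r} \, \| Z \|_{L^2}$, this yields $\| P_N \phi(x) - P_N \phi(y) \|_{L^r(\mu_L)} \lesssim \sqrt{r} \, N^{-1/2 + \gamma} |x-y|^{\gamma}$.

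Next I apply Kolmogorov's continuity theorem (after rescaling $[0,1]$ to $[-1,1]$) with the choice $\gamma = \beta + 2/r$, which is valid whenever $r$ exceeds a threshold $r_0 = r_0(\beta)$ ensuring $\gamma \leq 1$. This produces
\begin{equation*}
\Big( \int \big\| P_N \phi \big\|_{C^\beta_x([-1,1])}^r \, \mathrm{d}\mu_L(\phi) \Big)^{1/r} \lesssim_{\beta} \sqrt{r} \, N^{-1/2 + \beta + 2/r}.
\end{equation*}
I remove the $N^{2/r}$ factor by monotonicity of $L^r$-norms on the probability space $(\R^\infty, \mu_L)$: whenever $r < \max(r_0, \log N)$, I bound the $L^r$-norm by the $L^{r'}$-norm with $r' := \max(r_0, \lceil \log N \rceil)$, for which $N^{2/r'} = O(1)$. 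Combining both regimes and using $r + \log N \leq r(\log N + 1)$ for $r \geq 1$, $N \geq 1$, gives the desired factor $\sqrt{r}(\log N + 1)^{1/2}$; the $C^0$-part of the Hölder norm is of the same or lower order and is absorbed into the main term. The principal technical issue is the $N^{2/r}$ factor at small $r$, handled by the monotonicity argument above.
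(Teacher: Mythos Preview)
Your route via Brascamp--Lieb on increments plus Kolmogorov is different from the paper's and is essentially correct, but there is a gap you have not addressed: the $r$-dependence of the Kolmogorov constant. With $\gamma=\beta+2/r$ you are applying Lemma~\ref{prelim:lem-kolmogorov} with output exponent $\alpha_{\mathrm{Kol}}=\beta$ and input exponent $\beta_{\mathrm{Kol}}=\gamma-\tfrac1r=\beta+\tfrac1r$; the lemma's constant is $C_{\alpha_{\mathrm{Kol}},\beta_{\mathrm{Kol}}}$, and since $\beta_{\mathrm{Kol}}$ varies with $r$ you must check that it stays bounded as $r\to\infty$. In the standard chaining proof that constant is $\sum_{k\geq 0}2^{-k(\beta_{\mathrm{Kol}}-\alpha_{\mathrm{Kol}})}=\sum_{k\geq 0}2^{-k/r}\sim r$, which would replace your $\sqrt r$ by $r^{3/2}$ and break the final estimate for large $r$. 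The Garsia--Rodemich--Rumsey proof does give a constant $\lesssim 1+1/\beta$ uniformly in $r$ when $\beta>0$, but this has to be invoked explicitly; and for $\beta=0$ even GRR requires $\gamma>2/r$ strictly (e.g.\ $\gamma=3/r$), so your stated choice $\gamma=\beta+2/r$ fails there. Note also that Lemma~\ref{prelim:lem-kolmogorov} as stated requires $0<\alpha_{\mathrm{Kol}}$, so the case $\beta=0$ is not covered by it at all.

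The paper sidesteps all of this. It reduces $C_x^\beta$ to $C_x^0$ deterministically via \eqref{prelim:eq-Hoelder-LWP-1}, gaining exactly the factor $N^\beta$, and then reads off the $C_x^0$-bound from the tail estimate of Lemma~\ref{measure:lem-Gaussian}. That tail estimate was established via a grid argument (Lemma~\ref{prelim:lem-local-constancy}) rather than chaining: frequency localization lets a single grid of $N^{O(1)}$ points capture the supremum, and a Gaussian maximum over that many points yields the factor $(\log N+\lambda)^{1/2}$ in the tail, hence $\sqrt r\,(\log N+1)^{1/2}$ in moments. Both approaches rest on the same variance computation; the paper's is more modular and reuses earlier lemmas, while yours is more direct but needs the extra care described above to control the constant.
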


\begin{proof}[Proof of Corollary \ref{measure:cor-Gaussian}:] 
After using \eqref{prelim:eq-Hoelder-LWP-1} from Lemma \ref{prelim:lem-Hoelder-LWP} and using Corollary \ref{measure:cor-crude} to control the weighted $L^1(\R)$-term in \eqref{prelim:eq-Hoelder-LWP-1}, it suffices to prove \eqref{measure:eq-Gaussian-cor-e1} and \eqref{measure:eq-Gaussian-cor-e2} for $\beta=0$. 
For all $\lambda \geq 1$ and $N\geq 1$, it holds that $\lambda^{\frac{1}{2}} (1+\log(N))^{\frac{1}{2}}\geq (\lambda +\log(N))^{\frac{1}{2}}$.
Using Lemma \ref{measure:lem-Gaussian}, we then obtain for all $\lambda \geq 1$ that 
\begin{equation}\label{measure:eq-Gaussian-cor-p1}
\mu_L \bigg( \frac{\| P_N \phi\|_{L^\infty_x([-1,1])}}{N^{-\frac{1}{2}}(1+\log(N))^{\frac{1}{2}}} \geq C \lambda^{\frac{1}{2}}\bigg) \leq C e^{-c\lambda}.
\end{equation}

We note that even though Lemma \ref{measure:lem-Gaussian} controls $P_{\geq N} \phi$, while \eqref{measure:eq-Gaussian-cor-p1} involves $P_N \phi$, Lemma \ref{measure:lem-Gaussian} can still be used to obtain \eqref{measure:eq-Gaussian-cor-p1}. The reason is that $P_N \phi$ can be written as  $P_N\phi=P_{\geq N} \phi-P_{\geq 2N} \phi$. Alternatively, one can use \eqref{measure:eq-Gaussian-p1} from the proof of Lemma \ref{measure:lem-Gaussian} rather than its statement.  The estimate \eqref{measure:eq-Gaussian-cor-e1} now follows from the standard relation between tail and moment estimates, see e.g.~\cite[Proposition 2.5.2]{V18}. Since we restricted to the finite interval $[-1,1]$, the bound \eqref{measure:eq-Gaussian-cor-e2} cannot be deduced from \eqref{measure:eq-Gaussian-cor-e1} and $\| P_N\Delta\|_{L^\infty_x(\R)\rightarrow L^\infty_x(\R)}\lesssim N^2$. However, it follows from a minor modification of the proof of \eqref{measure:eq-Gaussian-cor-e1}, where we have to include an additional $|n|^2$-factor in \eqref{measure:eq-Gaussian-p6}. 
\end{proof}

\subsection{Proof of Theorem \ref{intro:thm-measure}}\label{section:proof-measure}

Equipped with Lemma \ref{measure:lem-moments-maximum} and Lemma \ref{measure:lem-Gaussian}, we are now ready to prove the main result of this section. 

\begin{proof}[Proof of Theorem \ref{intro:thm-measure}:] 
Let $N = N_R \geq 1$ be a deterministic parameter that remains to be chosen, let $C^\prime=C^\prime_p$ be sufficiently large, and let $c^\prime=c^\prime_p$ be sufficiently small. 
We now consider the event 
\begin{equation}\label{measure:eq-main-p1}
\begin{aligned}
&\Big\{ \big\| P_{< N} \phi \big\|_{L^\infty([-R,R])} \leq C^\prime N^{\frac{1}{p+1}} \big( \log(R) + \lambda\big)^{\frac{1}{p+1}} \Big\} \\ 
 &\bigcap \Big\{ \big\| P_{\geq N} \phi \big\|_{L^\infty([-R,R])} \leq C^\prime N^{-\frac{1}{2}} \big( \log(RN) + \lambda\big)^{\frac{1}{2}} \Big\}. 
 \end{aligned}
\end{equation}
From Lemma \ref{measure:lem-moments-maximum} and Lemma \ref{measure:lem-Gaussian}, it follows that the complement of the event in \eqref{measure:eq-main-p1} has probability $\leq 2 C^\prime e^{-c^\prime \lambda}$, which is sufficient. Furthermore, on the event \eqref{measure:eq-main-p1}, it holds that
\begin{align*}
 \big\|  \phi \big\|_{L^\infty([-R,R])}  &\leq  \big\| P_{< N} \phi \big\|_{L^\infty([-R,R])}  +  \big\| P_{\geq N} \phi \big\|_{L^\infty([-R,R])} \\
 &\leq C^\prime N^{\frac{1}{p+1}} \big( \log(R) + \lambda\big)^{\frac{1}{p+1}} + C^\prime N^{-\frac{1}{2}} \big( \log(RN) + \lambda\big)^{\frac{1}{2}} .
\end{align*}
By choosing 
\begin{equation*}
N \sim \big( \log(R) + \lambda \big)^{\frac{p-1}{p+3}},
\end{equation*}
we obtain the desired estimate. 
\end{proof}

\subsection{Coupling}\label{section:measure-coupling}

In this subsection, we construct a coupling of the Gibbs measures satisfying the assumptions in Theorem \ref{intro:thm-dynamics}. In fact, we prove a stronger estimate than in \eqref{intro:eq-coupling}, in which the probabilities have exponential rather than polynomial decay in $L$. 

\begin{proposition}[Coupling]\label{measure:prop-coupling}
Let $p>1$ and let $C\geq 1$ and $c,\eta>0$ be sufficiently large and small constants depending only on $p$, respectively. 
Then, there exist a probability space $(\Omega,\mathcal{F},\bP)$ and random continuous functions $\phi_L,\phi\colon (\Omega \times \R,\mathcal{F} \times \mathcal{B}(\R))\rightarrow \C$, where $L\in \dyadic$,  such that the following properties are satisfied:
\begin{enumerate}[label=(\roman*)]
    \item\label{measure:item-distribution} For all $L\in \dyadic$, we have that $\operatorname{Law}_\bP(\phi_L)=\mu_L$. Furthermore, we have that $\operatorname{Law}_\bP(\phi)=\mu$. 
    \item\label{measure:item-coupling} For all $L\in \dyadic$, it holds that
    \begin{equation*}
    \bP \Big( \big\| \phi - \phi_L \big\|_{C^0([-L^\eta,L^\eta])} > L^{-\eta} \Big) \leq C e^{-c L^\eta}.
    \end{equation*}
\end{enumerate}
\end{proposition}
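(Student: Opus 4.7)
The plan is to combine the three ingredients highlighted before the statement: the Wasserstein estimate (Lemma \ref{measure:lem-Wasserstein}), the density estimate (Lemma \ref{measure:lem-density}), and the quantitative Skorokhod representation (Proposition \ref{sk:prop-main}). The stochastic quantization framework of Subsection \ref{section:hairer-steele} supplies the natural mechanism for putting everything on one probability space. Concretely, I would place a single space-time white noise $\zeta$ on $\R \times \R$, and for each $L \in \dyadic$ build its $2\pi L$-periodization $\zeta_L$. Running the Langevin SPDE \eqref{measure:eq-psiL} with $\beta = 0$ driven by $\zeta_L$ on $\T_L$ started at stationarity gives $\psi_L(0) \sim \mu_L$, and the analogous infinite-volume SPDE driven by $\zeta$ gives $\psi(0) \sim \mu$. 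Setting $\phi := \psi(0)$ and $\phi_L := \psi_L(0)$ delivers property \ref{measure:item-distribution} with all marginals realized on one space.

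For \ref{measure:item-coupling}, I would first apply Lemma \ref{measure:lem-Wasserstein} to bound the $1$-Wasserstein distance between $\mu_L$ and $\mu$ in the $C^0([-R,R])$-topology by a polynomial $L^{-\alpha}$, for some $\alpha>0$ and any $R \leq L^\eta$ with $\eta$ small. The proof of Lemma \ref{measure:lem-Wasserstein} is expected to go through exactly the above coupling of noises: the difference $\psi - \psi_L$ solves a perturbed parabolic equation whose inhomogeneity, after periodizing, is effectively supported outside the strip $[-L/2, L/2]$, and parabolic smoothing combined with the a priori $L^\infty$ tail from Corollary \ref{measure:cor-Ltx-bound} controls this difference on $[-R,R]$ by a polynomial in $L^{-1}$.

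Next, to upgrade the polynomial-Wasserstein bound into an almost-sure coupling with \emph{exponential} tails in $L$, I would feed the Wasserstein bound into Proposition \ref{sk:prop-main}, using the density estimate of Lemma \ref{measure:lem-density} as the quantitative measure-theoretic input. The role of the density estimate is to bound a Radon--Nikodym derivative (or closely related quantity) between the laws of $\phi|_{[-L^\eta, L^\eta]}$ under $\mu_L$ and $\mu$, which is what converts Wasserstein closeness into a strong coupling in probability. The exponential decay of the tail is then inherited from the sharp Gibbs concentration of Theorem \ref{intro:thm-measure}, since it ensures that the typical sample on which the coupling is to be compared has $L^\infty([-L^\eta, L^\eta])$-norm of at most polylogarithmic size, with exponentially small failure probability.

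The main obstacle, in my view, is exactly this upgrade from polynomial to exponential tails: a naive application of Markov's inequality to the Wasserstein bound yields only polynomial decay in $L$, which would violate the stated $Ce^{-cL^\eta}$. The fix has to come from a careful interaction between the quantitative Skorokhod construction and the density/concentration inputs, i.e., one must exploit the fact that both $\mu_L$ and $\mu$ concentrate exponentially around configurations with well-controlled $L^\infty$-norm, together with the Radon--Nikodym control from Lemma \ref{measure:lem-density}, in order to rule out the bad event $\{\|\phi - \phi_L\|_{C^0([-L^\eta, L^\eta])} > L^{-\eta}\}$ on a set of exponentially small probability. Everything else — the single ambient noise giving the simultaneous coupling over all $L \in \dyadic$, the verification of the marginal laws, and the parabolic estimates for the Wasserstein bound — should be essentially routine.
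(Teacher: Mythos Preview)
Your overall plan of verifying the hypotheses of Proposition~\ref{sk:prop-main} and then invoking it is exactly what the paper does, but several of the details you supply are off in ways that matter.

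First, and most importantly, you misread Lemma~\ref{measure:lem-Wasserstein}: it gives
\[
\Wasserstein(\mu,\mu_L)\leq Ce^{-cL},
\]
an \emph{exponential} bound in $L$ (in the $\CE^\theta(\R)$ norm, not $C^0([-R,R])$), not a polynomial one. This is precisely the form required by hypothesis~\ref{sk:item-w} of Proposition~\ref{sk:prop-main} (with $\beta=1$). Hence the ``main obstacle'' you identify---upgrading polynomial Wasserstein decay to exponential tails---does not exist; the exponential decay in \ref{measure:item-coupling} is inherited directly from the exponential Wasserstein input, via the mechanism already built into Proposition~\ref{sk:prop-main}. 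Theorem~\ref{intro:thm-measure} plays no role here.

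Second, you conflate two distinct constructions. The shared-noise Langevin coupling you describe (periodize a single $\zeta$, run the SPDEs, set $\phi_L:=\psi_L(0)$) is the engine behind the \emph{proof of Lemma~\ref{measure:lem-Wasserstein}}, not behind Proposition~\ref{measure:prop-coupling}. The coupling asserted in Proposition~\ref{measure:prop-coupling} is produced abstractly by the Skorokhod-type construction of Proposition~\ref{sk:prop-main}, which partitions $\CE^\theta(\R)$ into cells determined by the values $\varphi(x_k^L)$ on a grid and matches cells between $\mu$ and $\mu_L$; the SPDE does not appear again. Relatedly, your description of the role of Lemma~\ref{measure:lem-density} is not right: it is not used to control a Radon--Nikodym derivative between $\mu_L$ and $\mu$, but rather to show that $\rho$-neighborhoods of the cell boundaries have $\mu$-measure $\lesssim\rho^\kappa$ (this is hypothesis~\ref{sk:item-d} and is used at \eqref{sk:eq-prob-difference-8}).

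Finally, the paper's actual proof is a one-liner: check \ref{sk:item-h} via Lemma~\ref{measure:lem-Gaussian} and Lemma~\ref{prelim:lem-Hoelder-LWP}, \ref{sk:item-d} via Lemma~\ref{measure:lem-density}, \ref{sk:item-w} via Lemma~\ref{measure:lem-Wasserstein}, and apply Proposition~\ref{sk:prop-main}. You omitted the H\"older-regularity input \ref{sk:item-h}, which is the third leg of the stool.
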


In Proposition \ref{measure:prop-coupling}, the infinite-volume Gibbs measure $\mu$ is the weak limit of the finite-volume Gibbs measures $\mu_L$. The existence and uniqueness of the infinite-volume limit will be obtained as part of Lemma \ref{measure:lem-Wasserstein} below. We prove Proposition \ref{measure:prop-coupling} using Proposition \ref{sk:prop-main}, which is a quantitative version of the Skorokhod representation theorem. In order to use Proposition \ref{sk:prop-main}, we need to verify that the Gibbs measures satisfy the assumptions from Proposition \ref{sk:prop-main}.\ref{sk:item-h}-\ref{sk:item-w}. The Hölder-estimate from \ref{sk:item-h} directly follows from Lemma \ref{prelim:lem-Hoelder-LWP} and Lemma \ref{measure:lem-Gaussian}. Before we turn to the assumptions in Proposition \ref{sk:prop-main}.\ref{sk:item-d}-\ref{sk:item-w}, we need to make a few preparations. For all $\theta>0$ and $\phi \colon \R \rightarrow \C$, we define the exponentially-weighted norm
\begin{equation}\label{measure:eq-def-CE}
\| \phi \|_{\CE^\theta(\R)}:= \big\| e^{-\theta|x|}\phi(x)\big\|_{C^0(\R)}.
\end{equation}
To make use of \eqref{measure:eq-def-CE}, we first show that the massive heat-operator is bounded on $\CE^\theta(\R)$. 

\begin{lemma}\label{measure:lem-heat-CE}
Let $\theta>0$ and $t\geq 0$. For all $\phi\colon \R \rightarrow \C$, it then holds that 
\begin{equation}\label{measure:eq-heat-CE}
\big\| e^{t\Delta} \phi \big\|_{\CE^\theta(\R)} \leq 2 e^{\theta^2 t}  \| \phi \|_{\CE^\theta(\R)}.
\end{equation}
\end{lemma}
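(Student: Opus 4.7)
The plan is to use the explicit heat kernel on $\R$, namely $e^{t\Delta}\phi(x) = \int_\R p_t(x-y)\phi(y)\,\mathrm{d}y$ with $p_t(z) = (4\pi t)^{-1/2} e^{-z^2/(4t)}$, and to estimate $e^{-\theta|x|}|e^{t\Delta}\phi(x)|$ pointwise in $x$. Writing $\phi(y) = e^{\theta|y|}\cdot (e^{-\theta|y|}\phi(y))$ and pulling the bound $|e^{-\theta|y|}\phi(y)|\leq\|\phi\|_{\CE^\theta(\R)}$ out of the integral gives
\begin{equation*}
e^{-\theta|x|}\bigl|e^{t\Delta}\phi(x)\bigr| \leq \|\phi\|_{\CE^\theta(\R)} \int_\R p_t(x-y)\, e^{\theta(|y|-|x|)}\,\mathrm{d}y.
\end{equation*}
By the reverse triangle inequality $|y|-|x|\leq |y-x|$, substituting $z=y-x$, the right-hand side is bounded by $\|\phi\|_{\CE^\theta(\R)}\,I(t)$ where $I(t):=\int_\R p_t(z)e^{\theta|z|}\,\mathrm{d}z$. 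Note this bound is uniform in $x$, so it immediately yields the claimed estimate once $I(t)\leq 2e^{\theta^2 t}$ is established.

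The remaining step is a standard Gaussian computation. Splitting into $z\geq 0$ and $z\leq 0$ (which contribute equally by symmetry) and completing the square via $-\tfrac{z^2}{4t}+\theta z = -\tfrac{(z-2t\theta)^2}{4t}+\theta^2 t$, I obtain
\begin{equation*}
I(t) = 2e^{\theta^2 t}\int_0^\infty \frac{1}{\sqrt{4\pi t}}\, e^{-(z-2t\theta)^2/(4t)}\,\mathrm{d}z \leq 2e^{\theta^2 t},
\end{equation*}
since the last integral is at most $\int_\R \tfrac{1}{\sqrt{4\pi t}} e^{-(z-2t\theta)^2/(4t)}\,\mathrm{d}z = 1$. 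Combining these two displays yields \eqref{measure:eq-heat-CE}.

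There is no real obstacle here; the factor $2$ on the right-hand side of \eqref{measure:eq-heat-CE} is exactly the price paid for dropping the truncation of the Gaussian to the half-line $z\geq 0$ (equivalently, it absorbs the fact that $e^{\theta|z|}$ is not smooth at $z=0$, so one cannot complete the square globally in $z$). The case $t=0$ is trivial since $e^{t\Delta}$ reduces to the identity and $2e^{\theta^2 t}=2\geq 1$.
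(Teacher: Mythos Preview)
Your proof is correct and follows essentially the same approach as the paper: both use the explicit heat kernel, bound $e^{\theta(|y|-|x|)}\leq e^{\theta|x-y|}$, and reduce to the Gaussian integral $\int_\R p_t(z)e^{\theta|z|}\,dz\leq 2e^{\theta^2 t}$. Your version spells out the completion-of-the-square computation more explicitly than the paper does, but the argument is identical.
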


\begin{proof}
From the definition of the $\CE^\theta$-norm and the explicit formula for the kernel of $e^{t\Delta}$, it directly follows that
\begin{equation*}
\big\| e^{t\Delta} \phi \big\|_{\CE^\theta(\R)} \leq \Big( \sup_{x\in \R} \frac{1}{\sqrt{4\pi t}} \int_{\R} e^{-\theta |x|} e^{-\frac{|x-y|^2}{4t}} e^{\theta|y|} \dy \Big) \| \phi \big\|_{\CE^\theta(\R)}.
\end{equation*}
Together with the elementary estimate
\begin{equation*}
\frac{1}{\sqrt{4\pi t}} \int_{\R} e^{-\theta |x|} e^{-\frac{|x-y|^2}{4t}} e^{\theta|y|} \dy 
\leq \frac{1}{\sqrt{4\pi t}} \int_{\R} e^{\theta |x-y|} e^{-\frac{|x-y|^2}{4t}}  \dy \leq 2 e^{\theta^2 t},
\end{equation*}
this implies \eqref{measure:eq-heat-CE}.
\end{proof}

Equipped with Lemma \ref{measure:lem-heat-CE}, we now state and prove a density estimate for the one-point marginals of the Gibbs measures.

\begin{lemma}[A simple density estimate]\label{measure:lem-density}
Let $0<\kappa<\frac{3}{4}$ and let $C=C_{\kappa,p} \geq 1$ be sufficiently large. For all $L\geq 10$, all $x\in \R$, and all $a,b\in \R$ satisfying $a<b$, it then holds that 
\begin{equation}\label{measure:eq-density}
\mu_L \Big( \Big\{ \Re \phi (x) \in [a,b] \Big\} \Big),\mu_L \Big( \Big\{ \Im \phi (x) \in [a,b] \Big\} \Big) \leq C |b-a|^{\kappa}.
\end{equation}
\end{lemma}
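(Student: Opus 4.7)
The plan is to combine invariance of $\mu_L$ under the Langevin dynamics \eqref{measure:eq-psiL} (with $\beta = 0$) with a short-time analysis of the underlying stochastic heat equation. By spatial translation invariance of $\mu_L$ and the symmetry $\phi \mapsto \bar{\phi}$, I first reduce to proving the bound for $\Re\phi(0)$; I may also assume $h := b - a \in (0, 1]$, since otherwise the estimate is trivial. Working on a probability space carrying $\psi_L(0) \sim \mu_L$ independent of a $2\pi L$-periodic space-time white noise $\zeta_L$, invariance gives, for any $\tau \in (0, 1]$ to be chosen below,
\[
\mu_L\big(\{\Re\phi(0) \in [a,b]\}\big) = \bP\big(\Re\psi_L(\tau, 0) \in [a, b]\big),
\]
and Duhamel's formula decomposes
\[
\psi_L(\tau, 0) = \underbrace{e^{\tau(\Delta - 1)}\psi_L(0)(0)}_{=: A} + \underbrace{\linear[L](\tau, 0)}_{=: G} - \underbrace{\int_0^\tau e^{(\tau - s)(\Delta - 1)}\big(|\psi_L|^{p-1}\psi_L\big)(s, 0)\, \mathrm{d}s}_{=: E}.
\]

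The three ingredients I plan to exploit are: (a) $\psi_L(0)$ is independent of $\zeta_L$, hence $A$ is independent of $G$; (b) a direct Fourier-space computation analogous to \eqref{measure:eq-Gaussian-p6} shows that $\Re G$ is a centered Gaussian with variance $\sigma_\tau^2 \gtrsim \sqrt{\tau}$ uniformly in $L \geq 10$, so that its density is bounded by $C \tau^{-1/4}$; and (c) on the good event $\mathcal{G}_M := \{\|\psi_L\|_{L^\infty_{t,x}([0, \tau] \times [-2, 2])} \leq M\}$, combined with a weighted tail bound at infinity handled via Corollary \ref{measure:cor-crude} and the rapid spatial decay of the kernel of $e^{t(\Delta - 1)}$, one has $|E| \leq C \tau M^p$. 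Corollary \ref{measure:cor-Ltx-bound} applied with $T = 1$ and $R = 2$ then gives $\bP(\mathcal{G}_M^c) \leq C e^{-c M^2}$.

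I then use the set inclusion
\[
\{\Re\psi_L(\tau, 0) \in [a, b]\} \cap \mathcal{G}_M \subseteq \{\Re A + \Re G \in [a - C\tau M^p, b + C\tau M^p]\},
\]
condition on $\psi_L(0)$, and apply (a)--(b) to obtain
\[
\bP\big(\{\Re\psi_L(\tau, 0) \in [a, b]\} \cap \mathcal{G}_M\big) \leq C \tau^{-1/4}\big(h + 2 C \tau M^p\big).
\]
Choosing $\tau = h$ and $M = (\log(1/h))^{1/2}$, the total probability is bounded by
\[
C e^{-c M^2} + C h^{3/4}\big(1 + (\log(1/h))^{p/2}\big) \lesssim_\kappa h^\kappa
\]
for any fixed $\kappa \in (0, 3/4)$, which is the desired estimate.

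The main obstacle is that $G$ and $E$ are not independent, since both are functionals of the same noise $\zeta_L|_{[0, \tau]}$; this is what prevents a direct density argument using the Gaussian $G$ alone. I bypass this coupling by stripping off the nonlinear remainder $E$ at the deterministic level on the good event $\mathcal{G}_M$ before invoking the Gaussian density bound for $G$, at the cost of enlarging the target interval by $2 C \tau M^p$. The logarithmic factor $M \sim (\log(1/h))^{1/2}$ required to make $\bP(\mathcal{G}_M^c)$ negligible compared to $h^{3/4}$ is precisely what forces the exponent to be strictly less than $3/4$ rather than equal to $3/4$.
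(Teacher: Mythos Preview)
Your approach is essentially the same as the paper's: both use invariance of $\mu_L$ under the Langevin dynamics, decompose $\psi_L(\tau,0)$ via Duhamel into the propagated initial datum, the Gaussian stochastic convolution, and the nonlinear remainder, exploit the independence of the first two and the $\tau^{1/4}$-variance of the Gaussian part, and control the nonlinear remainder via the space-time bound of Corollary~\ref{measure:cor-Ltx-bound}. The paper packages the last step as a tail bound $\bP(|E|\geq\delta)\leq C\exp(-c(t^{-1}\delta)^{2/p})$ and optimizes by taking $t\sim\delta^{4(1-\kappa)}$, whereas you restrict to a good event, widen the target interval deterministically, and take $\tau=h$; both optimizations give the same threshold $\kappa<3/4$.

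Two small points to tighten. First, your good event $\mathcal{G}_M$ only controls $\psi_L$ on $[-2,2]$, so the claimed bound $|E|\leq C\tau M^p$ does not follow from $\mathcal{G}_M$ alone: the heat kernel has tails, and the contribution from $|y|>2$ requires a \emph{space-time} weighted bound on $\psi_L$, not the fixed-time Corollary~\ref{measure:cor-crude}. The paper handles this by working in the exponentially weighted norm $\CE^\theta$ and bounding $\|\psi_L\|_{L^\infty_s\CE^1_x}$ via Corollary~\ref{measure:cor-Ltx-bound} on each dyadic $[-R,R]$ and summing; you should do the same, i.e.\ enlarge $\mathcal{G}_M$ accordingly. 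Second, with $M=(\log(1/h))^{1/2}$ you get $e^{-cM^2}=h^c$, which beats $h^\kappa$ only if the constant $c$ from Corollary~\ref{measure:cor-Ltx-bound} happens to satisfy $c\geq\kappa$; take instead $M=K(\log(1/h))^{1/2}$ with $K$ large enough that $cK^2>\kappa$, which costs only a harmless constant in the logarithmic factor.
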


\begin{proof} To simplify the notation below, we let $\delta:=b-a$. Since \eqref{measure:eq-density} is trivial for large $\delta$, we may assume that $0<\delta\leq 1$. We let $(\Omega,\mathcal{F},\bP)$ be a probability space that can support the following two independent random variables: A random function $\psi_L^{(0)}$ distributed according to the Gibbs measure $\mu_L$ and a $2\pi L$-periodic, complex-valued space-time white noise~$\zeta_L$. We then let $\psi_L$ be the corresponding solution of \eqref{measure:eq-modified-energy} with $\beta=0$. Due to the invariance of the Gibbs measure $\mu_L$ under its Langevin dynamics, it then holds that $\Law_\bP(\psi_L(t))=\mu_L$ for all $t\geq 0$. We now decompose
\begin{equation*}
\psi_L(t,x) = e^{t (\Delta-1)}\psi_L^{(0)}(x) + \linear[L] (t,x) + \varphi_L(t,x), 
\end{equation*}
where $\linear$ is as in \eqref{measure:eq-linear-L} and $\varphi_L$ is the solution of 
\begin{equation*}
(\partial_t + 1 - \Delta) \varphi_L = - |\psi_L|^{p-1} \psi_L
\end{equation*}
with initial data $\varphi(0)=0$. For any $t\geq 0$, it then holds that
\begin{align}
&\, \mu_L \Big( \big\{ \Re \phi(x) \in [a,a+\delta] \big\} \Big) \notag \\ 
=&\, \bP \Big( \Re \big( e^{t(\Delta-1)} \psi_L^{(0)} (x) + \linear[L](t,x) + \varphi_L (t,x) \big) \in [a,a+\delta] \Big) \notag \\
\leq&\, \bP \Big( \Re \big( e^{t(\Delta-1)} \psi_L^{(0)} (x) + \linear[L](t,x) \big) \in [a-\delta,a+2\delta] \Big) +  \bP \Big( \Big| \Re \big( \varphi_L (t,x) \big) \Big| \geq \delta \Big). \label{measure:eq-density-1} 
\end{align}
In the following, we restrict ourselves to $0< t \leq 1$ and estimate the two terms in \eqref{measure:eq-density-1} separately. To estimate the first term in \eqref{measure:eq-density-1}, we note that $\psi_L^{(0)}$ and $\linear[L]$ are independent. Furthermore, we note that $\Re\linear[L](t,x)$ is a Gaussian random variable whose variance is comparable to
\begin{equation*}
\int_{[0,t]} \int_\R \bigg( \frac{1}{\sqrt{4\pi (t-s)}} e^{-(t-s)} e^{-\frac{|x-y|^2}{4(t-s)}} \bigg)^2 \dy \ds \sim t^{\frac{1}{2}}.    
\end{equation*}
From this, we obtain that 
\begin{align*}
   &\, \bP \Big( \Re \big( e^{t(\Delta-1)} \psi_L^{(0)} (x) + \linear[L](t,x) \big) \in [a-\delta,a+2\delta] \Big) \\
   \leq& \, \sup_{a^\prime \in \R} \bP \Big( \Re \big(  \linear[L](t,x) \big) \in [a^\prime,a^\prime+3\delta] \Big) \lesssim t^{-\frac{1}{4}} \delta.
\end{align*}
We now turn to the second term in \eqref{measure:eq-density-1}. By translation invariance, it suffices to treat the case $x=0$. 
Using $t\in [0,1]$ and Lemma \ref{measure:lem-heat-CE}, we obtain that 
\begin{equation*}
|\varphi_L(t,0)| \leq \big\| \varphi_L(t) \big\|_{\CE^p(\R)}
\leq 2e^{2p^2} \big\| |\psi_L(s)|^{p-1} \psi_L(s) \big\|_{L_s^1 \CE^p_x ([0,t]\times \R)} \leq 2 e^{2p^2}  t \,  \big\| \psi_L(s)\big\|_{L_s^\infty \CE^1_x([0,1]\times \R)}^p.
\end{equation*}
The $L_s^\infty \CE_x^1$-norm above can be bounded by 
\begin{equation*}
\big\| \psi_L(s)\big\|_{L_s^\infty \CE^1_x([0,1]\times \R)} \leq \sum_{R\in \dyadic} e^{-\frac{1}{2} R} 
\big\| \psi_L(s) \big\|_{L_s^\infty L_x^\infty([0,1]\times [-R,R])}.
\end{equation*}
We now let $C^\prime\geq1 $ and $c^\prime>0$ be sufficiently large and small constants depending only on $p$, respectively, whose precise value may change from line to line. Using a union bound and Corollary~\ref{measure:cor-Ltx-bound}, we then obtain for all $\lambda \geq 1$ that 
\begin{align*}
\bP \Big( \big\| \psi_L(s)\big\|_{L_s^\infty \CE^1_x([0,1]\times \R)} \geq C^\prime \lambda^{\frac{1}{2}} \Big)
&\leq \sum_{R\in \dyadic} \bP \Big( e^{-\frac{1}{2} R} 
\big\| \psi_L(s) \big\|_{L_s^\infty L_x^\infty([0,1]\times [-R,R]} \geq 4C^\prime e^{-\frac{1}{4} R} \lambda^{\frac{1}{2}} \Big) \\
&\leq C^\prime \sum_{R\in \dyadic} \exp\Big(-c^\prime e^{\frac{R}{2}} \lambda \Big) \leq C^\prime e^{-c^\prime \lambda}.
\end{align*}
As a result, we obtain that 
\begin{align*}
\bP \Big( \Big| \Re \big( \varphi_L (t,0) \big) \Big| \geq \delta \Big)
\leq \bP \Big( \big\| \psi_L(s)\big\|_{L_s^\infty \CE^1_x([0,1]\times \R)}^p \geq  \big( 2 e^{2p^2} t\big)^{-1} \delta \Big)
\leq C^\prime \exp\Big( - c^\prime (t^{-1} \delta)^{\frac{2}{p}} \Big). 
\end{align*}
After choosing $t \sim \delta^{4(1-\kappa)}$, we obtain the desired estimate \eqref{measure:eq-density}. 
\end{proof}

In the next lemma, we prove an estimate that quantifies the weak convergence of $\mu_L$ to $\mu$. To make this quantitative estimate, we introduce the Wasserstein distance. For any two probability measures $\mu$ and $\nu$ on $\CE^\theta(\R)$, it is defined as 
\begin{equation}\label{measure:eq-def-Wasserstein}
\Wasserstein(\mu,\nu) = \inf_{\gamma \in \Gamma(\mu,\nu)} \int \| \phi -  \psi \|_{\CE^\theta(\R)} \dgamma(\phi,\psi). 
\end{equation}
In \eqref{measure:eq-def-Wasserstein}, $\Gamma(\mu,\nu)$ is the set of couplings of $\mu$ and $\nu$. That is, $\Gamma(\mu,\nu)$ is the set of all measures on the product space $\CE^\theta(\R)\times \CE^\theta(\R)$ whose first and second marginal are given by $\mu$ and $\nu$, respectively.

\begin{lemma}[Wasserstein-distance]\label{measure:lem-Wasserstein}
Let $p>1$ and let $0<\theta \leq \frac{1}{4}$. Furthermore, let $C\geq 1$ and $c>0$ be sufficiently large and small depending on $p$ and $\theta$, respectively. For all $K,L\in \dyadic$ satisfying $K\geq L$, it then holds that
\begin{equation}\label{measure:eq-Wasserstein}
\Wasserstein(\mu_K,\mu_L) \leq C e^{-cL}.
\end{equation}
In particular, the infinite-volume Gibbs measure $\mu$ can be defined as the unique weak limit of the finite-volume Gibbs measures $\mu_L$ and satisfies
\begin{equation}\label{measure:eq-Wasserstein-limit}
\Wasserstein(\mu,\mu_L) \leq C e^{-cL}.
\end{equation}
\end{lemma}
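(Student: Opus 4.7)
The plan is to construct the coupling using the Langevin (stochastic quantization) dynamics, with the driving noises realised as restrictions of a single space-time white noise on $\R\times\R$ to fundamental domains of the two tori. By the triangle inequality for $\Wasserstein$, it suffices to prove the bound for $K=2L$ and iterate dyadically; the weak-limit statement \eqref{measure:eq-Wasserstein-limit} will then follow from Cauchy-ness of $(\mu_L)_{L\in\dyadic}$ together with sending $K\to\infty$ in \eqref{measure:eq-Wasserstein}. Concretely, on a single probability space I would realise $\psi_L^{(0)}\sim\mu_L$ and $\psi_K^{(0)}\sim\mu_K$ (independently) together with a complex space-time white noise $\widetilde{\zeta}$ on $\R\times\R$, and define $\zeta_L$ to be the $2\pi L$-periodic white noise whose action on any test function supported in the fundamental domain $\R\times[-\pi L,\pi L]$ equals that of $\widetilde{\zeta}$, and $\zeta_K$ analogously on $\T_K$. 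Viewed as distributions on $\R\times\R$ via periodic extension, both $\zeta_K$ and $\zeta_L$ then coincide with $\widetilde{\zeta}$ on $\R\times[-\pi L,\pi L]$, so $\zeta_K-\zeta_L$ is supported in $\R\times\{|x|\geq\pi L\}$ with local variance density bounded by an absolute constant. Letting $\psi_L,\psi_K$ solve \eqref{measure:eq-psiL} with $\beta=0$, invariance of $\mu_L$ and $\mu_K$ makes $(\psi_K(T),\psi_L(T))$ a coupling of $\mu_K$ and $\mu_L$ for every $T\geq 0$.

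Setting $w:=\psi_K-\psi_L$ and viewing it on $\R$ via periodic extension, one has
\begin{equation*}
(\partial_t+1-\Delta)w=-\bigl(|\psi_K|^{p-1}\psi_K-|\psi_L|^{p-1}\psi_L\bigr)+\sqrt{2}\bigl(\zeta_K-\zeta_L\bigr).
\end{equation*}
Applying It\^o's formula to $\tfrac12\|e^{-\theta|\cdot|}w(t)\|_{L^2(\R)}^2$, the pointwise monotonicity inequality $\Re\bigl((z_1-z_2)\overline{|z_1|^{p-1}z_1-|z_2|^{p-1}z_2}\bigr)\geq 0$, which follows from convexity of $z\mapsto |z|^{p+1}/(p+1)$ on $\C\simeq\R^2$ for $p\geq 1$, renders the nonlinear contribution non-positive; the $+1$ and $-\Delta$ produce a coercive term of the form $(2-O(\theta^2))\|e^{-\theta|\cdot|}w\|_{L^2}^2$, with the $O(\theta^2)$ commutator generated by the exponential weight absorbable for $\theta\leq\tfrac14$; and the It\^o quadratic variation from the noise is bounded by $C\int_{|x|\geq\pi L}e^{-2\theta|x|}\,\dx\lesssim e^{-cL}$ by construction. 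Taking expectations gives
\begin{equation*}
\partial_t\,\E\bigl\|e^{-\theta|\cdot|}w(t)\bigr\|_{L^2}^2+c\,\E\bigl\|e^{-\theta|\cdot|}w(t)\bigr\|_{L^2}^2\leq Ce^{-cL},
\end{equation*}
and Gronwall at $T\sim L$, combined with the uniform-in-$L$ bound $\E\|e^{-\theta|\cdot|}w(0)\|_{L^2}^2\lesssim 1$ which follows from Lemma \ref{measure:lem-brascamp-lieb} and the bounded one-point variance of the massive Gaussian free field, yields $\E\|e^{-\theta|\cdot|}w(T)\|_{L^2}^2\lesssim e^{-cL}$.

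To upgrade from this weighted $L^2$-bound to the $\CE^\theta$-norm, I would use parabolic regularity for $w$: the mild formulation together with Lemma \ref{measure:lem-heat-CE} and the uniform $L^\infty_{t,x}$-bounds on $\psi_K,\psi_L$ from Corollary \ref{measure:cor-Ltx-bound} provides $C^\alpha$-regularity of $w(T)$ with at most logarithmic-in-$L$ Hölder norm, after which 1D interpolation between $L^2$ and $C^\alpha$ converts $L^2$-smallness into $L^\infty$-smallness at the cost of a polynomial factor; the weight is preserved because the heat kernel is essentially local on the unit spatial scale. This yields $\Wasserstein(\mu_{2L},\mu_L)\leq Ce^{-cL}$, and iteration together with the Cauchy property in $\Wasserstein$ concludes the proof. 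The main technical obstacle is the $L^2$-to-$\CE^\theta$ upgrade, where the parabolic smoothing and the exponential weight must be balanced carefully while retaining the gain of $e^{-cL}$; a related subtlety is rendering rigorous the It\^o computation for the weighted-$L^2$ functional, given that $\zeta_K-\zeta_L$ is a distributional Gaussian field whose covariance acts, on the central annulus, as roughly twice the $L^2$-orthogonal projection onto $\{|x|\geq\pi L\}$.
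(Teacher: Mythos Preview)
Your approach is sound and would go through, but it differs from the paper's in two structural ways that are worth noting.

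First, the paper does \emph{not} take independent initial data and wait a long time; instead it draws $(\phi_K,\phi_L)$ from an (almost) optimal coupling $\gamma_{K,L}$, runs the coupled Langevin dynamics for a \emph{fixed} time $t=4$, and uses a kick-back. Concretely, the contraction gives
\[
\Wasserstein(\mu_K,\mu_L)\;\leq\;\E\big\|\psi_K(t)-\psi_L(t)\big\|_{\CE^\theta}\;\leq\;2\sqrt{2}\,e^{-(1-2\theta^2)t}\,\Wasserstein(\mu_K,\mu_L)+\text{(noise error)},
\]
and at $t=4$ the prefactor is $\leq 1/2$, so the Wasserstein distance is bounded by twice the noise error. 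This avoids having to bound $\E\|w(0)\|$ and avoids running for $T\sim L$.

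Second, the paper works directly in $\CE^\theta$ rather than in weighted $L^2$. It subtracts the linear difference $\lineardif[K,L]$ (which solves the linear heat equation driven by $\sqrt{2}(\zeta_K-\zeta_L)$ and is manifestly $O(e^{-cL})$ in weighted norms because $\zeta_K-\zeta_L$ vanishes on $[-\pi L,\pi L]$), sets $\varrho_{K,L}=\psi_K-\psi_L-\lineardif[K,L]$, and studies $\chi_{K,L}=|\varrho_{K,L}|^2$ pointwise. Convexity yields the parabolic inequality $(\partial_t+2-\Delta)\chi_{K,L}\leq C(|\psi_K|+|\psi_L|)^p|\lineardif[K,L]|$, and the $\CE^\theta$-boundedness of the heat semigroup (Lemma~\ref{measure:lem-heat-CE}) immediately gives the $\CE^\theta$-contraction estimate. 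No It\^o calculus and no $L^2\to L^\infty$ upgrade are needed.

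What this buys: your route is more ``energy-method'' in flavour and perfectly legitimate, but the interpolation step you flag as the main obstacle (weighted $L^2$ smallness plus weighted $C^\alpha$ control of $w(T)$, then Gagliardo--Nirenberg on unit intervals, then H\"older in probability) is genuine extra work, and requires weighted H\"older bounds on the stationary Langevin solutions that the paper never states explicitly. The paper's pointwise $|\varrho|^2$ trick sidesteps all of that and lands directly in the target norm. Your reduction to $K=2L$ is also unnecessary: the paper's argument, and indeed yours, works uniformly in $K\geq L$ because the only $K$-dependence enters through $\zeta_K-\zeta_L$, whose support condition depends only on $L$.
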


\begin{proof} 
It suffices to prove the estimate \eqref{measure:eq-Wasserstein}, since it directly implies the existence and uniqueness of the weak limit $\mu$ and the limiting estimate \eqref{measure:eq-Wasserstein-limit}.
In the following proof, we let $C=C_p \geq 1$ be a sufficiently large constant whose precise value may change from line to line. We let $\gamma_{K,L}$ be any coupling of $\mu_K$ and $\mu_L$ such that
\begin{equation}\label{measure:eq-Wasser-1}
\int \big\| \phi_K - \phi_L \big\|_{\CE^\theta(\R)} \dgamma_{K,L}(\phi_K,\phi_L) \leq 2 \Wasserstein(\mu_K,\mu_L). 
\end{equation}
The idea behind our argument is to use Langevin dynamics to construct a new coupling of $\mu_K$ and $\mu_L$ out of $\gamma_{K,L}$ which, unless $\gamma_{K,L}$ already witnesses \eqref{measure:eq-Wasserstein}, significantly improves on $\gamma_{K,L}$. In our estimates of the new coupling, we heavily rely on the convexity of the potential $ z \mapsto \frac{1}{p+1} |z|^{p+1}$. \\

To construct the new coupling, we let $(\Omega,\mathcal{F},\bP)$ be a sufficiently rich probability space. We let $\phi_K$ and $\phi_L$ be random functions satisfying $\Law_\bP((\phi_K,\phi_L))=\gamma_{K,L}$. Furthermore, we let $\zeta$ be a space-time white noise which is independent of $(\phi_K,\phi_L)$. We also let $\zeta_K$ and $\zeta_L$ be the $2\pi K$ and $2\pi L$-periodic space-time white noises that agree with $\zeta$ on the space-time cylinders $[0,\infty)\times [-\pi K,\pi K]$ and $[0,\infty)\times [-\pi L,\pi L]$, respectively. We define $\psi_K$ and $\psi_L$ as the solutions of 
\begin{alignat}{3}
(\partial_t +1 - \Delta) \psi_K &= - |\psi_K|^{p-1} \psi_K + \sqrt{2} \zeta_K,  \qquad & \qquad \psi_K(0)=\phi_K, \label{measure:eq-Wasser-2} \\
(\partial_t +1 - \Delta) \psi_L &= - |\psi_L|^{p-1} \psi_L + \sqrt{2} \zeta_L,  \qquad & \qquad \psi_L(0)=\phi_L. \label{measure:eq-Wasser-3}
\end{alignat}
Due to the invariance of the Gibbs measures $\mu_K$ and $\mu_L$ under \eqref{measure:eq-Wasser-2} and \eqref{measure:eq-Wasser-3}, respectively, it follows that $\Law_\bP(\psi_K(t))=\mu_K$ and $\Law_\bP(\psi_L(t))=\mu_L$ for all $t\geq 0$. In particular, it holds that 
\begin{equation*}
\Law_\bP\big( (\psi_K(t),\psi_L(t)) \big) \in \Gamma(\mu_K,\mu_L)
\end{equation*}
for all $t\geq 0$. In order to estimate the difference between $\psi_K$ and $\psi_L$, we introduce the following variables:
\begin{enumerate}[label=(\roman*)]
\item We define $\varphi_{K,L} := \psi_K - \psi_L$, i.e., we define $\varphi_{K,L}$ as the difference between $\psi_K$ and $\psi_L$. 
\item We define  $\lineardif[K,L]$ as the solution of $(\partial_t +1 - \Delta) \lineardif[K,L]\, = \sqrt{2} (\zeta_K-\zeta_L)$ with initial data $\lineardif[K,L] (0)=0$. 
\item We define the nonlinear remainder $\varrho_{K,L} := \varphi_{K,L} - \lineardif[K,L]$.
\item For similar reasons as in the proof of Lemma \ref{measure:lem-pointwise}, we define $\chi_{K,L} := |\varrho_{K,L}|^2$.
\end{enumerate}
From the definition of the linear stochastic object $\lineardif[K,L]$, together with the fact that $\zeta_K$ and $\zeta_L$ agree on $[-\pi L,\pi L]$, it follows that 
\begin{equation}\label{measure:eq-Wasser-5}
\lineardif[K,L](t,x) = \sqrt{2} \int_{[0,t]} \int_{\R \backslash [-\pi L,\pi L]} \frac{1}{\sqrt{4\pi(t-s)}} e^{-(t-s)} e^{-\frac{|x-y|^2}{4(t-s)}} \big( \zeta_K(s,y)- \zeta_L(s,y) \big) \, \dy \ds.
\end{equation}
Furthermore, using the definitions of $\varphi_{K,L}$, $\varrho_{K,L}$, and $\chi_{K,L}$, one obtains that 
\begin{equation*}
(\partial_t + 1 - \Delta ) \varrho_{K,L} = - \big( |\psi_K|^{p-1} \psi_K - |\psi_L|^{p-1} \psi_L \big) 
\end{equation*}
and 
\begin{equation}\label{measure:eq-Wasser-6}
(\partial_t + 2 - \Delta ) \chi_{K,L} = - |\partial_x \varrho_{K,L}|^2 -  2 \Re \big( \overline{\varrho_{K,L}} \, \big( |\psi_K|^{p-1} \psi_K - |\psi_L|^{p-1} \psi_L \big) \big).
\end{equation}
For any convex function $V\colon \R^n \rightarrow \R$, where $n\in \mathbb{N}$, we have that $\langle \nabla V(x)- \nabla V(y),x-y \rangle \geq 0$ for all $x,y\in \R^n$. Since $z\in \C \mapsto \frac{1}{p+1} |z|^{p+1}$ is convex, it therefore follows that 
\begin{equation*}
\Re \big( \big( |\psi_K|^{p-1} \psi_K - |\psi_L|^{p-1} \psi_L \big)  \overline{(\psi_K - \psi_L)}  \big) \geq 0. 
\end{equation*}
Together with \eqref{measure:eq-Wasser-6} and the definition of $\varrho_{K,L}$, we then obtain that 
\begin{equation}\label{measure:eq-Wasser-7}
(\partial_t +2 - \Delta) \chi_{K,L} \leq C \big( |\psi_K|+|\psi_L| \big)^p \big| \, \lineardif[K,L] \big|.  
\end{equation}
By using both \eqref{measure:eq-Wasser-7} and the non-negativity of the heat-kernel, this yields 
\begin{equation}
\chi_{K,L}(t,x) \leq e^{t(\Delta-2)} \chi_{K,L}(0) + C \int_0^t  e^{(t-s) (\Delta-2)} \Big( \big( |\psi_K|+|\psi_L| \big)^p \big| \, \lineardif[K,L] \big|  \Big) \ds . 
\end{equation}
Using Lemma \ref{measure:lem-heat-CE}, we then obtain that 
\begin{align*}
\big\| \chi_{K,L}(t) \big\|_{\CE^{2\theta}(\R)}
&\leq \big\| e^{t(\Delta-2)} \chi_{K,L}(0) \big\|_{\CE^{2\theta}(\R)} + C \int_0^t \Big\| e^{(t-s) (\Delta-2)} \Big( \big( |\psi_K|+|\psi_L| \big)^p \big| \, \lineardif[K,L] \big|  \Big) \Big\|_{\CE^{2\theta}(\R)} \ds \\
&\leq 2 e^{-(2-4\theta^2)t} \|  \chi_{K,L}(0) \|_{\CE^{2\theta}(\R)} + C \int_0^t  e^{-(2-4\theta^2)(t-s)} \big\|   \big( |\psi_K|+|\psi_L| \big)^p  \, \lineardif[K,L]   \big\|_{\CE^{2\theta}(\R)} \ds \\
&\leq 2 e^{-(2-4\theta^2)t} \big\| \chi_{K,L}(0) \big\|_{\CE^{2\theta}(\R)} + C \, \big\| \big( |\psi_K|+|\psi_L| \big)^p\,  \lineardif[K,L] \big\|_{L^\infty_s \CE^{2\theta}([0,t]\times \R)}. 
\end{align*}
In the last inequality we used our assumption $0<\theta\leq 1/4$ to bound the integral of $e^{-(2-4\theta^2)(t-s)}$ in $s$. 
Using the definition of $\chi_{K,L}$, $\varrho_{K,L}$, and $\varphi_{K,L}$, the trivial identity $\| |\varphi|^2\|_{\CE^{2\theta}(\R)}=\| \varphi \|_{\CE^\theta(\R)}^2$, and the sub-additivity of the square-root function, it then follows that
\begin{equation}\label{measure:eq-Wasser-8}
\begin{aligned}
&\,\big\| \psi_K(t) - \psi_L(t) \big\|_{\CE^\theta(\R)} \\ 
\leq&\, \sqrt{2} e^{-(1-2\theta^2)t} \big\| \phi_K - \phi_L \big\|_{\CE^\theta(\R)} + \big\| \, \lineardif[K,L](t)\, \big\|_{\CE^\theta(\R)}
+ C \, \big\| \big( |\psi_K|+|\psi_L| \big)^p\,  \lineardif[K,L] \big\|_{L^\infty_s \CE^{2\theta}([0,t]\times \R)}^{\frac{1}{2}}.
\end{aligned}
\end{equation}
Since the laws of both $(\psi_K(t),\psi_L(t))$ and $(\phi_K,\phi_L)$ are couplings of the Gibbs measures $\mu_K$ and $\mu_L$, and the latter coupling satisfies \eqref{measure:eq-Wasser-1}, we then obtain from \eqref{measure:eq-Wasser-8} that 
\begin{align*}
&\, \Wasserstein(\mu_K,\mu_L) \\
\leq&\, \E \big[\big\| \psi_K(t) - \psi_L(t) \big\|_{\CE^\theta(\R)} \big] \\
\leq&\, \sqrt{2} e^{-(1-2\theta^2) t} \E \big[ \big\| \phi_K - \phi_L \big\|_{\CE^\theta(\R)}  \big] 
+ \E \Big[ \big\| \, \lineardif[K,L](t)\, \big\|_{\CE^\theta(\R)}  
+ C \, \big\| \big( |\psi_K|+|\psi_L| \big)^p\,  \lineardif[K,L] \big\|_{L^\infty_s \CE^{2\theta}([0,t]\times \R)}^{\frac{1}{2}} \Big] \\
\leq&\, 2\sqrt{2}  e^{-(1-2\theta^2)t} \Wasserstein(\mu_K,\mu_L) + \E \Big[ \big\| \, \lineardif[K,L](t)\, \big\|_{\CE^\theta(\R)}  
+ C \, \big\| \big( |\psi_K|+|\psi_L| \big)^p\,  \lineardif[K,L] \big\|_{L^\infty_s \CE^{2\theta}([0,t]\times \R)}^{\frac{1}{2}} \Big].
\end{align*}
By choosing $t=4$, which is such that $2\sqrt{2}e^{-(1-2\theta^2)t}\leq 1/2$, and using a kick-back argument, we then obtain
\begin{equation*}
\Wasserstein(\mu_K,\mu_L) \leq C \E \Big[ \big\| \, \lineardif[K,L]\, \big\|_{L_t^\infty \CE^\theta([0,4]\times \R)}  
+  \, \big\| \big( |\psi_K|+|\psi_L| \big)^p\,  \lineardif[K,L] \big\|_{L^\infty_t \CE^{2\theta}([0,4]\times \R)}^{\frac{1}{2}} \Big].
\end{equation*}
As a result, it only remains to prove that
\begin{equation}\label{measure:eq-Wasserstein-9}
\E \Big[ \big\| \, \lineardif[K,L] \big\|_{L_t^\infty \CE^\theta([0,4]\times \R)}  
+  \, \big\| \big( |\psi_K|+|\psi_L| \big)^p\,  \lineardif[K,L] \big\|_{L^\infty_t \CE^{2\theta}([0,4]\times \R)}^{\frac{1}{2}} \Big] \leq C e^{-cL}.
\end{equation}
This follows easily from Corollary \ref{measure:cor-Ltx-bound} and \eqref{measure:eq-Wasser-5}, and we omit the remaining details.
\end{proof}

Equipped with the previous lemmas, we are now ready to prove the main result of this subsection.

\begin{proof}[Proof of Proposition \ref{measure:prop-coupling}:]
The statement in Proposition \ref{measure:prop-coupling} follows directly from our quantitative version of the Skorokhod representation theorem, i.e., Proposition \ref{sk:prop-main}. The three assumptions in Proposition \ref{sk:prop-main} are satisfied due to Lemma \ref{measure:lem-Gaussian} (and Lemma \ref{prelim:lem-Hoelder-LWP}), Lemma \ref{measure:lem-density}, and Lemma~\ref{measure:lem-Wasserstein}, respectively.
\end{proof}

\section{Uniform estimates for the nonlinear Schrödinger equations}\label{section:uniform}

In the previous section, we obtained uniform bounds for the samples $\phi_L$ drawn from the Gibbs measures~$\mu_L$. Using the invariance of $\mu_L$, we now upgrade them to uniform bounds of the corresponding solution $u_L$ of \eqref{intro:eq-NLS}. The idea to use the invariance of Gibbs measures to bound solutions of \eqref{intro:eq-NLS} was first used in the periodic setting in \cite{B94,B96}, and later in the infinite-volume setting in \cite{B00}. 

\begin{proposition}[\protect{$C_{t,x}^0$-norms}]\label{uniform:prop-linfty}
Let $p>1$. Let $\mbox{$C=C_p\geq 1$}$ and $c=c_p>0$ be sufficiently large and small constants, respectively.
For all $L\geq 1$, let $\mbox{$\phi_L \colon (\Omega \times \R,\mathcal{F}\times \mathcal{B}(\R))\rightarrow \C$}$ be a random function satisfying $\operatorname{Law}(\phi_L)=\mu_L$ and let $u_L$ be the unique global solution of~\eqref{intro:eq-NLS} with initial data $u_L(0)=\phi_L$.  Then, it holds for all $T\geq 1$, $R\geq 10$, and $\lambda >0$ that
\begin{equation}\label{uniform:eq-linfty}
\sup_{L\geq 10} \bP \Big(  \| u_L \|_{C_t^0 C_x^0([-T,T]\times [-R,R])}  > C  ( \log(T+R) +\lambda )^{\frac{2}{p+3}} + C (TR)^{-20} \lambda^2  \Big) \leq C e^{-c\lambda}.
\end{equation}
\end{proposition}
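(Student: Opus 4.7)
The plan is to (i) discretize time, (ii) use the invariance of $\mu_L$ under~\eqref{intro:eq-NLS} to apply Theorem~\ref{intro:thm-measure} at each grid time, and (iii) use the equation itself to bound the time oscillation between grid times, closing a simple bootstrap.

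First, I introduce a time grid $t_j := j\tau$ for $j \in J := \{j \in \Z : |j\tau| \leq T\}$, with spacing $\tau := (TR)^{-D}$ for a sufficiently large $D = D_p$, so that $|J| \lesssim_D (TR)^{D+1}$. Since $\mu_L$ is invariant under the NLS flow, $\Law_\bP(u_L(t_j)) = \mu_L$ for every $j$. Theorem~\ref{intro:thm-measure} applied at each $t_j$, combined with the maximum tail estimate (Lemma~\ref{prelim:lem-maximum-tail}), absorbs the union-bound cost $\log|J| \lesssim \log(TR)$ into the $\log(T+R)$-term and yields
\begin{equation*}
\bP\Big( \max_{j \in J} \|u_L(t_j)\|_{L^\infty([-R,R])} \geq B_1 \Big) \leq C e^{-c\lambda}, \qquad B_1 := C' (\log(T+R)+\lambda)^{2/(p+3)}.
\end{equation*}
The same argument with Lemma~\ref{measure:lem-Gaussian} in place of Theorem~\ref{intro:thm-measure} gives, simultaneously, the Gaussian bound $\|P_M u_L(t_j)\|_{L^\infty([-R,R])} \lesssim M^{-1/2} (\log(T+R)+\lambda)^{1/2}$ for all $j$ and every dyadic $M \geq 1$. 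I call the intersection of these events $\mathcal{E}_1$.

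Second, I extend the bound from the grid to all $t \in [-T,T]$ using the equation. Fix $t \in [t_j, t_{j+1}]$ and set $N \sim (\log(T+R)+\lambda)^{(p-1)/(p+3)}$, the optimal threshold from the proof of Theorem~\ref{intro:thm-measure}. Writing $u_L = P_{\leq N} u_L + \sum_{M > N} P_M u_L$, using $i\partial_t u_L + \Delta u_L = |u_L|^{p-1} u_L$, and invoking the local Bernstein estimate (Lemma~\ref{prelim:lem-bernstein}), the low-frequency oscillation satisfies
\begin{equation*}
\|P_{\leq N}(u_L(t) - u_L(t_j))\|_{L^\infty([-R,R])} \lesssim \tau\bigl( N^2 \|u_L\|_{L^\infty_t L^\infty_x([t_j,t] \times [-2R,2R])} + \|u_L\|_{L^\infty_{t,x}}^p \bigr) + \mathrm{Err},
\end{equation*}
where the Bernstein tail $\mathrm{Err} = (RN)^{-D} \|\langle x \rangle^{-D} u_L\|_{L^1_{t,x}}$ fits into the $C(TR)^{-20}\lambda^2$ error term via Corollary~\ref{measure:cor-crude} applied at every grid time. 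For each $P_M u_L$ with $M > N$, I would use a finer $M$-dependent grid of spacing $\tau_M := M^{-2} \tau$ and apply the grid-time Gaussian bound: the corresponding between-grid oscillation is $\lesssim \tau_M\bigl( M^2 \cdot M^{-1/2} (\log(T+R)+\lambda)^{1/2} + B_1^p \bigr) \lesssim (TR)^{-D}\bigl( M^{-1/2}(\log(T+R)+\lambda)^{1/2} + M^{-2} B_1^p \bigr)$, summable over $M > N$ with total contribution $\ll B_1$. The extra union-bound cost from the finer grids, $\log M_{\max} + \log |J| \lesssim \log(TR)$, is again absorbed into $\log(T+R)$.

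Third, I close a continuity-in-time bootstrap on each interval $[t_j, t_{j+1}]$: since $u_L \in C^0_{t,x}$ globally, if $\sup_{s \in [t_j, t]} \|u_L(s)\|_{L^\infty([-R,R])} \leq 2 B_1$, the oscillation estimates above give $\|u_L(t) - u_L(t_j)\|_{L^\infty([-R,R])} \leq B_1 / 2$, hence $\|u_L(t)\|_{L^\infty([-R,R])} \leq 3 B_1 / 2 < 2 B_1$, so the bootstrap self-improves and extends to all of $[t_j, t_{j+1}]$. The main obstacle is the combination of two features: $\|\Delta P_{\leq N} u_L\|_{L^\infty} \sim N^2 \|u_L\|_{L^\infty}$ forces $\tau$ small, while the high-frequency part $P_{>N} u_L$ must also be controlled uniformly in $t$. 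Resolving this requires using invariance and Lemma~\ref{measure:lem-Gaussian} on dyadic $M$-dependent grids, rather than at the single threshold scale $N$.
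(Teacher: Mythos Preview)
Your approach differs from the paper's. The paper first proves a Hölder-in-time estimate (Proposition~\ref{uniform:prop-hoelder}) via Kolmogorov's continuity theorem: moment bounds on $\|u_L(t)-u_L(s)\|_{C^\beta_x([-1,1])}/|t-s|^\gamma$ are obtained by writing $\partial_t u_L = i\Delta u_L - i|u_L|^{p-1}u_L$, applying Minkowski's integral inequality, and using invariance of $\mu_L$ to reduce each time-$t'$ moment to a static bound on $P_N\Delta\phi_L$ and $|\phi_L|^{p-1}\phi_L$ (Corollary~\ref{measure:cor-Gaussian} and Theorem~\ref{intro:thm-measure}). With Proposition~\ref{uniform:prop-hoelder} in hand, Proposition~\ref{uniform:prop-linfty} is two lines: take a grid of spacing $(TR)^{-100}$, bound grid times by Theorem~\ref{intro:thm-measure} and Lemma~\ref{prelim:lem-maximum-tail}, and bound the off-grid remainder by $(TR)^{-25}\|u_L\|_{C^{1/4}_t C^0_x}$, which is absorbed into the $C(TR)^{-20}\lambda^2$ term. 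Because the paper works in moments, invariance collapses every time to $t=0$, and there is no bootstrap and no spatial domain to track.

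Your pathwise bootstrap, by contrast, has a genuine gap: it does not close spatially. Your oscillation bound on $[-R,R]$ invokes $\|u_L\|_{L^\infty_{t,x}([t_j,t]\times[-2R,2R])}$ (the local Bernstein estimate enlarges the spatial domain), but your bootstrap hypothesis controls only $\|u_L(s)\|_{L^\infty([-R,R])}$. Closing on $[-2R,2R]$ would in turn require $[-4R,4R]$, and so on; the same enlargement recurs for each $P_M$ in your $M$-dependent grid scheme, and also for the weighted-$L^1$ error term, which Corollary~\ref{measure:cor-crude} only controls at grid times. This is fixable --- for instance by bootstrapping simultaneously on all dyadic spatial scales $[-2^kR,2^kR]$ with thresholds $B_1^{(k)} \sim (\log(T+2^kR)+\lambda)^{2/(p+3)}$, whose slow growth in $k$ lets the oscillation bound close --- but the argument as written is incomplete. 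Note also that Proposition~\ref{uniform:prop-hoelder} is needed independently later (Definition~\ref{difference:def-good-event} and Lemma~\ref{difference:lem-probability-good}), so the paper's detour through Kolmogorov is not wasted effort.
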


The estimate \eqref{uniform:eq-linfty} can be slightly improved, since the $(TR)^{-20} \lambda^2 $-term is certainly not optimal. Due to the $R^{-20}$-factor, however, this term is completely irrelevant in our application of \eqref{uniform:eq-linfty}, and we therefore do not pursue this further.

In order to deal with the supremum over $t\in [-T,T]$ in \eqref{uniform:eq-linfty}, we want to make use of Kolmogorov's continuity theorem (see Lemma \ref{prelim:lem-kolmogorov}). In order to use Kolmogorov's continuity theorem, however, we need control of the Hölder-norms of $u_L$. Since estimates of the Hölder-norms will also be useful in Section \ref{section:difference}, we record them in a separate proposition. 

\begin{proposition}[\protect{$C_t^\alpha C_x^\beta$-norms}]\label{uniform:prop-hoelder}
Let $p>1$. Let $C=C_p\geq 1$ and $c=c_p>0$ be sufficiently large and small constants, respectively. Furthermore, let $\alpha,\beta\in [0,1)$ satisfy $2\alpha+\beta<\frac{1}{2}$. 
For all $L\geq 1$, let $\phi_L \colon (\Omega \times \R,\mathcal{F}\times \mathcal{B}(\R))\rightarrow \C$ be a random function satisfying $\operatorname{Law}(\phi_L)=\mu_L$ and let $u_L$ be the unique global solution of~\eqref{intro:eq-NLS} with initial data $u_L(0)=\phi_L$.  Then, it holds for all $T\geq 1$, $R\geq 10$, and $\lambda >0$ that
\begin{equation}\label{uniform:eq-hoelder}
\sup_{L\geq 10} \bP \Big(  \| u_L \|_{C_t^\alpha C_x^\beta([-T,T]\times [-R,R])}  > C  ( \log(T+R) +\lambda )^2\Big) \leq C e^{-c\lambda}.
\end{equation}
\end{proposition}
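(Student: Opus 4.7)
The plan is to apply Kolmogorov's continuity theorem (Lemma~\ref{prelim:lem-kolmogorov}) to the process $\tau\mapsto u_L(\tau)$ with values in $C^\beta_x([-R,R])$. The key step is a moment bound on time increments of the form
\[
\E\bigl[\|u_L(t)-u_L(s)\|^r_{C^\beta_x([-R,R])}\bigr]^{1/r}\lesssim |t-s|^{\frac14-\frac{\beta}{2}-\varepsilon}(\log(T+R)+r)^{O(1)}
\]
for $|t-s|\le 1$ and arbitrarily small $\varepsilon>0$, which I will derive frequency by frequency using the NLS equation and the invariance of $\mu_L$.

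For each dyadic $N$ and fixed $\tau$, the invariance $\Law_\bP(u_L(\tau))=\mu_L$ together with Corollary~\ref{measure:cor-Gaussian}, translation invariance, and the maximum tail estimate (Lemma~\ref{prelim:lem-maximum-tail}) yields
\[
\E\bigl[\|P_N u_L(\tau)\|^r_{C^\beta_x([-R,R])}\bigr]^{1/r}\lesssim \sqrt r\, N^{-\frac12+\beta}(\log(RN)+1)^{\frac12},
\]
and the analogous bound with $N^{3/2+\beta}$ for $\Delta P_N u_L$. For the nonlinear term, Lemma~\ref{prelim:lem-Hoelder-LWP} gives $\|P_N(|u|^{p-1}u)\|_{C^\beta([-R,R])}\lesssim N^\beta\|u\|_{L^\infty([-2R,2R])}^p$ up to a weighted $L^1$ error controlled by Corollary~\ref{measure:cor-crude}, while Theorem~\ref{intro:thm-measure} supplies $\E[\|u_L(\tau)\|_{L^\infty([-R,R])}^{pr}]^{1/r}\lesssim(\log R+r)^{2p/(p+3)}$. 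Writing $\partial_\tau P_N u_L = i\Delta P_N u_L - iP_N(|u_L|^{p-1}u_L)$ and applying Minkowski's integral inequality together with invariance then produces
\[
\E\bigl[\|P_N(u_L(t)-u_L(s))\|^r_{C^\beta_x([-R,R])}\bigr]^{1/r}\lesssim |t-s|\Bigl(\sqrt r\,N^{\frac32+\beta}(\log(RN))^{\frac12}+N^\beta(\log R+r)^{\frac{2p}{p+3}}\Bigr).
\]
Taking the minimum of this with the trivial uniform bound $\sqrt r\,N^{-\frac12+\beta}(\log(RN))^{\frac12}$, and summing over dyadic $N$ with the linear crossover $N_\ast\sim|t-s|^{-1/2}$, yields the claimed $|t-s|^{\frac14-\frac{\beta}{2}-\varepsilon}$ decay; the nonlinear piece contributes only the subdominant $|t-s|^{1-2\beta}$ factor since $\beta<\frac12$.

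With the moment estimate in hand, I cover $[-T,T]$ by order-$T$ unit time intervals and apply Lemma~\ref{prelim:lem-kolmogorov} on each: the hypothesis $2\alpha+\beta<\frac12$ guarantees that the Kolmogorov exponent $\frac14-\frac{\beta}{2}-\varepsilon-\frac1r$ exceeds $\alpha$ once $\varepsilon$ is small and $r$ is large. Taking the maximum over the intervals via Lemma~\ref{prelim:lem-maximum-tail} costs only a logarithmic factor, and the fixed-time bound from Corollary~\ref{measure:cor-Gaussian} handles the pointwise piece. This produces $\E[\|u_L\|^r_{C^\alpha_t C^\beta_x([-T,T]\times[-R,R])}]^{1/r}\lesssim(\log(T+R)+r)^{O(1)}$, and the tail bound \eqref{uniform:eq-hoelder} then follows by Markov's inequality with $r\sim\lambda$; the exponent $2$ in the statement absorbs the polynomial moment growth, using crucially that $\frac{2p}{p+3}\le 2$ for every $p>1$.

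The main obstacle I anticipate is the joint bookkeeping of the linear and nonlinear contributions at the level of moments: the nonlinear term grows like $r^{2p/(p+3)}$, faster than the sub-Gaussian $\sqrt r$ from the linear piece, so care is needed to verify that after taking the minimum and summing in $N$ it remains subdominant and remains compatible with the exponent $2$ in the statement. A more routine but tedious point is to propagate the weighted $L^1$ error terms from Lemma~\ref{prelim:lem-Hoelder-LWP} (controlled via Corollary~\ref{measure:cor-crude}) through every step of the argument, so that they produce only negligible $(TR)^{-D}$-type corrections.
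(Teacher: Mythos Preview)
Your proposal is correct and follows essentially the same route as the paper: Kolmogorov's continuity theorem combined with a dyadic frequency decomposition, estimating each $P_N$-piece in two ways (trivially via invariance of $\mu_L$, and via the equation $\partial_t P_N u_L = i\Delta P_N u_L - iP_N(|u_L|^{p-1}u_L)$ integrated in time), and then optimizing. The one organizational difference is that the paper first uses the space-time translation invariance of $\Law(u_L)$ to reduce to the single unit box $[0,1]\times[-1,1]$, so that Corollary~\ref{measure:cor-Gaussian} applies verbatim and no $\log(T+R)$-factors need be carried through the moment estimates; you instead work on $[-R,R]$ throughout and pick up the $\log(T+R)$ via the maximum-tail lemma at the end, which is equally valid but slightly heavier in bookkeeping.
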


As can be seen from the proof below, the exponent $2$ in $( \log(T+R) +\lambda )^2 $ is certainly not optimal, but it is more than sufficient for our purposes. In contrast, the condition $2\alpha+\beta<\frac{1}{2}$ is likely optimal. The reason is that $\phi_L$ has regularity $\frac{1}{2}-$ and that, due to the scaling-symmetry of~\eqref{intro:eq-NLS}, time-derivatives of $u_L$ cost twice as much as spatial-derivatives of $u_L$.

\begin{remark}\label{uniform:rem-kernel-estimates}
While the proof of \eqref{uniform:eq-hoelder} follows the same strategy as in \cite[Section 2]{B00}, it improves on one of the technical aspects of \cite{B00}. Instead of the Duhamel integral formulation of \eqref{intro:eq-NLS}, we directly work with a frequency-truncated version of \eqref{intro:eq-NLS}. Due to this, we do not rely on the estimates of the kernel of $P_{\leq N} e^{it\Delta}$ that were derived in \cite[(2.10)-(2.22)]{B00}.
\end{remark}

\begin{proof}[Proof of Proposition \ref{uniform:prop-hoelder}:]
For future use, we choose $\gamma \in [0,1)$ satisfying $\alpha<\gamma$ and $\beta+2\gamma<\frac{1}{2}$. Due to Lemma \ref{prelim:lem-maximum-tail}, it suffices to prove that 
\begin{equation}\label{uniform:eq-hoelder-p1} 
\sup_{t_0 \in [-T,T]} \sup_{x_0\in [-R,R]} 
\bP  \Big(  \| u_L \|_{C_t^\alpha C_x^\beta([t_0,t_0+1]\times [x_0-1,x_0+1])}  > C^\prime \lambda^2 \Big) \leq C^\prime e^{-c^\prime\lambda},
\end{equation}
where $C^\prime=C^\prime_p$ and $c^\prime=c^\prime_p$ are constants. We now note that the law of $u_L$ is invariant under space-time translations, which follows from the invariance of the Gibbs measure $\mu_L$ under \eqref{intro:eq-NLS} and the invariance of $\mu_L$ under spatial translations. As a result, it suffices to estimate the probability in  \eqref{uniform:eq-hoelder-p1} for $t_0=0$ and $x_0=0$, i.e., it suffices to prove that 
\begin{equation}\label{uniform:eq-hoelder-p2}  
\bP  \Big(  \| u_L \|_{C_t^\alpha C_x^\beta([0,1]\times [-1,1])}  > C^\prime \lambda^2 \Big) \leq C^\prime e^{-c^\prime\lambda}.
\end{equation}
Using the equivalence of tail and moment estimates (see e.g. \cite[Propositions 2.5.2 and 2.7.1]{V18}), it then suffices to prove for all $r\geq 1$ that 
\begin{equation}\label{uniform:eq-hoelder-p3}
\E \Big[ \| u_L \|_{C_t^\alpha C_x^\beta([0,1]\times [-1,1])}^r \Big]^{\frac{1}{r}} \lesssim_{\alpha,\beta,\gamma}  r^2. 
\end{equation}
Due to Hölder's inequality, it further suffices to prove \eqref{uniform:eq-hoelder-p3} for $r\geq r_{\alpha,\beta,\gamma}$, where $r_{\alpha,\beta,\gamma}$ is sufficiently large depending on $\alpha$, $\beta$, and $\gamma$. In particular, we may therefore assume that $\alpha+\frac{1}{r}<\gamma$. Using Kolmogorov's continuity theorem (Lemma \ref{prelim:lem-kolmogorov}), it then suffices to show that 
\begin{equation}\label{uniform:eq-hoelder-3}
\sup_{0\leq s < t \leq 1}  \E \Bigg[ \Bigg( \frac{\| u_L(t)-u_L(s)\|_{C_x^\beta([-1,1])}}{|t-s|^\gamma}\Bigg)^r  \Bigg]^{\frac{1}{r}} \lesssim_{\alpha,\beta,\gamma}  r^2. 
\end{equation}
To this end, we use a dyadic decomposition and estimate the left-hand side of \eqref{uniform:eq-hoelder-3} by 
\begin{equation}\label{uniform:eq-hoelder-4}
\begin{aligned}
&\, \sup_{0\leq s < t \leq 1}  \E \Bigg[ \Bigg( \frac{\| u_L(t)-u_L(s)\|_{C_x^\beta([-1,1])}}{|t-s|^\gamma}\Bigg)^r  \Bigg]^{\frac{1}{r}} \\
\lesssim&\, \sup_{0\leq s < t \leq 1} \sum_{N\in \dyadic}  |t-s|^{-\gamma} 
\E \Big[ \| P_N u_L(t)-P_N u_L(s)\|_{C_x^\beta([-1,1])}^r \Big]^{\frac{1}{r}}.
\end{aligned}
\end{equation}
We now estimate the moments on the right-hand side of \eqref{uniform:eq-hoelder-4} in two different ways. First, using the invariance of the Gibbs measure under \eqref{intro:eq-NLS} and Corollary \ref{measure:cor-Gaussian}, we obtain that 
\begin{equation}\label{uniform:eq-hoelder-5} 
\begin{aligned}
 \E \Big[ \| P_N u_L(t)-P_N u_L(s)\|_{C_x^\beta([-1,1])}^r \Big]^{\frac{1}{r}}
&\lesssim \E \Big[ \| P_N u_L(t)\|_{C_x^\beta([-1,1])}^r \Big]^{\frac{1}{r}}
+ \E \Big[ \| P_N u_L(s)\|_{C_x^\beta([-1,1])}^r \Big]^{\frac{1}{r}} \\
&\lesssim \E \Big[ \| P_N \phi_L \|_{C_x^\beta([-1,1])}^r \Big]^{\frac{1}{r}}
\lesssim N^{-\frac{1}{2}+\beta} (1+\log(N))^{\frac{1}{2}} r^{\frac{1}{2}}.
\end{aligned}
\end{equation}
Second, using \eqref{intro:eq-NLS}, we have that 
\begin{align*}
&\, \| P_N u_L(t)-P_N u_L(s)\|_{C_x^\beta([-1,1])} \\
\leq&\, \int_s^t \| P_N \Delta u_L (t^\prime) \|_{C_x^\beta([-1,1])} \dt^\prime
+ \int_s^t \| P_N ( |u_L|^{p-1} u_L )(t^\prime) \|_{C_x^\beta([-1,1])} \dt^\prime.
\end{align*}
Together with Minkowki's inequality and the invariance of the Gibbs measure $\mu_L$ under \eqref{intro:eq-NLS}, it then follows that
\begin{equation}\label{uniform:eq-hoelder-6} 
\begin{aligned}
 &\, \E \Big[ \| P_N u_L(t)-P_N u_L(s)\|_{C_x^\beta([-1,1])}^r \Big]^{\frac{1}{r}} \\
 \leq&\, \int_s^t \E \Big[ \| P_N \Delta u_L (t^\prime) \|_{C_x^\beta([-1,1])}^r \Big]^{\frac{1}{r}} \dt^\prime
+ \int_s^t \E \Big[ \| P_N ( |u_L|^{p-1} u_L )(t^\prime) \|_{C_x^\beta([-1,1])}^r \Big]^{\frac{1}{r}} \dt^\prime \\
=&\, |t-s| \E \Big[ \| P_N \Delta \phi_L \|_{C_x^\beta([-1,1])}^r \Big]^{\frac{1}{r}}
+ |t-s| \E \Big[ \| P_N ( |\phi_L|^{p-1} \phi_L ) \|_{C_x^\beta([-1,1])}^r \Big]^{\frac{1}{r}}.
\end{aligned}
\end{equation}
The first term in \eqref{uniform:eq-hoelder-6} is bounded using Corollary \ref{measure:cor-Gaussian}. The second term in \eqref{uniform:eq-hoelder-6} is bounded by first using Lemma \ref{prelim:lem-Hoelder-LWP}, which also eliminates the Littlewood-Paley operator $P_N$, and then using Theorem \ref{intro:thm-measure}. In total, it then follows that 
\begin{equation}\label{uniform:eq-hoelder-6p}
\begin{aligned}
   \eqref{uniform:eq-hoelder-6} \lesssim&\, N^\beta |t-s| \Big( N^{\frac{3}{2}} (1+\log(N))^{\frac{1}{2}} r^{\frac{1}{2}} + r^{\frac{2p}{p+3}} \Big) \lesssim |t-s| N^{\frac{3}{2}+\beta} (1+\log(N))^{\frac{1}{2}} r^{2}. 
\end{aligned}
\end{equation}

In the last inequality, we also used that $(2p)/(p+3)\leq 2$. By combining \eqref{uniform:eq-hoelder-4}-\eqref{uniform:eq-hoelder-6p}, we then obtain that 
\begin{equation}\label{uniform:eq-hoelder-7}
\begin{aligned}
  &\, \sup_{0\leq s < t \leq 1}  \E \Bigg[ \Bigg( \frac{\| u_L(t)-u_L(s)\|_{C_x^\beta([-1,1])}}{|t-s|^\gamma}\Bigg)^r  \Bigg]^{\frac{1}{r}}  \\
  \lesssim&\, \sup_{0\leq s < t \leq 1} \sum_{N\geq 1} N^\beta |t-s|^{-\gamma} \min \Big( N^{-\frac{1}{2}}, |t-s| N^{\frac{3}{2}} \Big) (1+\log(N))^{\frac{1}{2}} r^2. 
\end{aligned}
\end{equation}
Using the parameter condition $\beta+2\gamma<\frac{1}{2}$, this implies \eqref{uniform:eq-hoelder-p3}, which completes the proof.
\end{proof}

\begin{proof}[Proof of Proposition \ref{uniform:prop-linfty}:]
We let $\Lambda_{T,R}\subseteq [-T,T]$ be a grid with spacing $(TR)^{-100}$. Using the definition of the Hölder norm, it then follows that 
\begin{equation*}
\| u_L \|_{C_t^0 C_x^0([-T,T]\times [-R,R])}
\leq \max_{t\in \Lambda_{T,R}} \|u_L(t)\|_{C_x^0([-R,R])}
+ (TR)^{-25} \| u_L \|_{C_t^{1/4} C_x^0([-T,T]\times [-R,R])}.
\end{equation*}
It therefore suffices to show that
\begin{align}
\bP \Big( \max_{t\in \Lambda_{T,R}} \| u_L(t) \|_{C_x^0([-R,R])} > C (\log(T+R)+\lambda)^{\frac{2}{p+3}} \Big) &\leq \tfrac{1}{2} C e^{-c\lambda} \label{uniform:eq-linfty-1} \\
\text{and} \qquad 
\bP \Big( \| u_L(t) \|_{C_t^{1/4} C_x^0([-R,R])} > C (TR)^5 \lambda^2 \Big) 
&\leq \tfrac{1}{2} C e^{-c\lambda}. \label{uniform:eq-linfty-2}
\end{align}
The first estimate \eqref{uniform:eq-linfty-1} follows directly from Theorem \ref{intro:thm-measure}, Lemma \ref{prelim:lem-maximum-tail}, and the invariance of the Gibbs measure $\mu_L$ under \eqref{intro:eq-NLS}. The second estimate \eqref{uniform:eq-linfty-2} follows directly from Proposition~\ref{uniform:prop-hoelder}. 
\end{proof}

\section{Difference estimates for the nonlinear Schrödinger equations}\label{section:difference}

The goal of this section is to bound the differences of solutions to \eqref{intro:eq-NLS}. 
As in Theorem \ref{intro:thm-dynamics}, we let $u_L$ and $u_{L/2}$ be the solutions of \eqref{intro:eq-NLS} with the initial data $\phi_L$ and $\phi_{L/2}$. We then define $w_L$ as the difference, i.e., 
\begin{equation}\label{difference:eq-w}
w_L := u_L - u_{L/2}.
\end{equation}
From the definition of $w_L$, it  follows that $w_L$ is a solution of the linear Schrödinger equation
\begin{equation}\label{difference:eq-w-equation}
i \partial_t w_L + \Delta w_L = Q_{L}^+w_L + Q_{L}^- w_L.  
\end{equation}
Here, $Q_L^+$ and $Q_L^-$ can be expressed using $F(z,\overline{z})=|z\overline{z}|^{\frac{p-1}{2}}z$ as 
\begin{align}
Q_L^+ &= \int_0^1 (\partial_z F)\Big( u_{L/2} + \theta (u_L - u_{L/2}), \overline{u_{L/2}+\theta (u_L - u_{L/2})} \Big) \mathrm{d}\theta, \label{difference:eq-Q-plus} \\
Q_L^- &= \int_0^1 (\partial_{\overline{z}} F)\Big( u_{L/2} + \theta (u_L - u_{L/2}), \overline{u_{L/2}+\theta (u_L - u_{L/2})} \Big) \mathrm{d}\theta.\label{difference:eq-Q-minus}
\end{align}
If the parameter $p$ is an odd integer, both $Q_L^+$ and $Q_L^-$ are polynomials in $u_L,u_{L/2},\overline{u_L}$, and $\overline{u_{L/2}}$ of degree $p-1$. However, both $Q_L^+$ and $Q_L^-$ are well-defined for general parameters $p>1$. \\

In order to control $w_L$, we will need the a-priori estimates of $u_L$ and $u_{L/2}$ from Section \ref{section:uniform}. For expository purposes, we now introduce the good event $\goodind$, which captures these a-priori estimates.  

\begin{definition}[Good event]\label{difference:def-good-event}
Let $A_0\geq 1$ be a sufficiently large constant and let $\delta>0$ be sufficiently small depending on $\alpha$ from \eqref{intro:eq-as-convergence-estimate}.  For all $L\geq 10$, $R\geq 10$, and $T\geq 1$, we then define the good event 
\begin{equation}\label{difference:eq-good-event}
\begin{aligned}
 \goodind &:= \Big\{ \max_{u=u_L,u_{L/2}}  \Big\| \log\big(T+R+\langle x \rangle\big)^{-\frac{2}{p+3}} u \Big\|_{L_t^\infty L_x^\infty([-T,T]\times \R)} \leq A_0 \Big\} \\
 &\,\,\bigcap \Big\{  \max_{u=u_L,u_{L/2}} \sup_{N\geq 1} N^{\frac{1}{2}-\delta} \Big\| \log\big(T+R+\langle x \rangle\big)^{-\frac{1}{2}} P_{>N} u \Big\|_{L_t^\infty L_x^\infty([-T,T]\times \R)} \leq A_0 \Big\}.
\end{aligned}
\end{equation}
\end{definition}
From the definition of the good event, it directly follows for all $R_1\leq R_2$ that 
\begin{equation}\label{difference:eq-good-inclusion}
\good_{L,T,R_1} \subseteq \good_{L,T,R_2},
\end{equation}
which will be useful in the proof of Lemma \ref{difference:lem-iterated} below. Using our earlier estimates in Section \ref{section:uniform}, we obtain that the good event has high probability.

\begin{lemma}[Probability of the good event]\label{difference:lem-probability-good}
Let $A_1 \geq 1$ be a sufficiently large constant depending on~$A_0$. For all $L\geq 10$, $R\geq 10$, and $T\geq 1$, it then holds that 
\begin{equation*}
\bP \big( \goodind \big) \geq 1 - A_1 (TR)^{-100}.
\end{equation*}
\end{lemma}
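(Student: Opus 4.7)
The plan is to estimate the probability of the complement of $\goodind$ by a union bound over $u\in\{u_L,u_{L/2}\}$, the two conditions in Definition \ref{difference:def-good-event}, dyadic spatial shells $\{|x|\leq R_j\}$ with $R_j:=2^j\max(R,10)$, and (for the second condition) dyadic frequencies $N\in \dyadic$. The key tool will be the a-priori estimates from Section \ref{section:uniform}, applied on each piece with the parameter $\lambda$ chosen proportional to $\log(TR)+j+\log N$ so that the exponential tails yield bounds summable over $j$ and $N$.

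For the first condition, on each shell $[-R_j,R_j]$ the weight satisfies $\log(T+R+\langle x\rangle)\sim \log(T+R+R_j)$. Applying Proposition \ref{uniform:prop-linfty} with radius $R_j$ and with $\lambda=D(\log(TR)+j)$ for $D$ sufficiently large, the main term is bounded by $C(\log(T+R+R_j)+D\log(TR)+Dj)^{2/(p+3)}$, which is $\lesssim \log(T+R+R_j)^{2/(p+3)}$ after absorbing into $A_0$; the $(TR_j)^{-20}\lambda^2$ error is harmless. The resulting failure probability on the $j$-th shell is $\lesssim e^{-cD(\log(TR)+j)}\leq (TR)^{-200}2^{-j}$, which sums to $\lesssim (TR)^{-200}$.

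For the second condition, I will further dyadically decompose in $N$ and reduce the supremum in time to a maximum over a grid. Fix $(N,j)$; introduce a time grid $\Lambda^{N,j}_T\subseteq [-T,T]$ of spacing $(TR_jN)^{-D}$ for $D$ large. At each fixed $t\in \Lambda^{N,j}_T$, the invariance of $\mu_L$ under \eqref{intro:eq-NLS} combined with Lemma \ref{measure:lem-Gaussian} yields the tail estimate $\bP(\|P_{>N}u(t)\|_{L^\infty([-R_j,R_j])}>CN^{-1/2}(\log(R_jN)+\lambda)^{1/2})\leq Ce^{-c\lambda}$. Applying Lemma \ref{prelim:lem-maximum-tail} across the grid (with $|\Lambda^{N,j}_T|\lesssim (TR_jN)^{D+1}$) and choosing $\lambda=D'(\log(TR_jN)+j)$ for $D'$ large absorbs the logarithm of the grid size. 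For off-grid times I invoke Proposition \ref{uniform:prop-hoelder} with $\alpha=1/4-\varepsilon,\beta=0$, whose Hölder bound is $\lesssim \log^2$, so the contribution from differences between grid and off-grid times is bounded by $(TR_jN)^{-D(1/4-\varepsilon)}\log^2$, which is $\ll N^{-1/2}$ for $D$ large. After these estimates the bound on the $(N,j)$-block becomes $\lesssim N^{-1/2}(\log(T+R+R_j)+\log N)^{1/2}$. This is $\leq A_0 N^{-1/2+\delta}\log(T+R+R_j)^{1/2}$ once the elementary inequality $(\log(T+R+R_j)+\log N)^{1/2}\leq N^\delta\log(T+R+R_j)^{1/2}$ is verified, which holds for $N\geq N_0(\delta)$ and is trivial for $N\leq N_0(\delta)$ upon enlarging $A_0$. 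The failure probability for the $(N,j)$-block is $\lesssim (TR)^{-cD'}N^{-cD'}2^{-cD'j}$, which sums in $N$ and $j$ to $\lesssim (TR)^{-100}$.

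The main obstacle is the frequency-localized sup-in-time estimate: we must pass from the spatial Gaussian estimate at fixed time (Lemma \ref{measure:lem-Gaussian}, made applicable at each time by invariance of $\mu_L$) to a supremum over $t\in[-T,T]$ without losing the crucial gain $N^{-1/2+\delta}$. The time-grid+Hölder argument accomplishes this because (i) spacing $(TR_jN)^{-D}$ makes the grid-to-continuum error negligible once multiplied by the modest $\log^2$ Hölder norm from Proposition \ref{uniform:prop-hoelder}, and (ii) the logarithm $\log|\Lambda^{N,j}_T|$ introduced by the union bound over grid points is still only a logarithm of $TR_jN$, which is absorbed into $N^\delta$. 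Combining the contributions from the two conditions and the two choices of $u$ yields the claimed bound $\bP(\goodind)\geq 1-A_1(TR)^{-100}$ for $A_1$ sufficiently large depending on $A_0$.
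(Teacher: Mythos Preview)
Your argument is correct. For the first condition in Definition~\ref{difference:def-good-event}, your dyadic shell decomposition combined with Proposition~\ref{uniform:prop-linfty} and the choice $\lambda\sim\log(TR)+j$ is exactly the paper's proof of \eqref{difference:eq-probability-good-p1}.

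For the second condition the paper is terse: it says only that \eqref{difference:eq-probability-good-p2} is ``similar, using Proposition~\ref{uniform:prop-hoelder} instead of Proposition~\ref{uniform:prop-linfty}''. A black-box application of Proposition~\ref{uniform:prop-hoelder} with $\beta$ close to $\tfrac12$, followed by Lemma~\ref{prelim:lem-Hoelder-LWP}, yields $\|P_{>N}u\|_{C_x^0([-R',R'])}\lesssim N^{-1/2+\delta}(\log(T+R')+\lambda)^2$, i.e.\ a $\log^2$ rather than the $\log^{1/2}$ weight written in Definition~\ref{difference:def-good-event}; since the second condition feeds only into the error terms of Proposition~\ref{difference:prop-main}, this discrepancy is harmless for the paper. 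Your route is different and sharper: you use invariance to apply the fixed-time Gaussian tail of Lemma~\ref{measure:lem-Gaussian} (which carries the exponent $\tfrac12$) at each point of a time grid, invoke Lemma~\ref{prelim:lem-maximum-tail} over the grid, and bridge between grid points via the H\"older continuity from Proposition~\ref{uniform:prop-hoelder}. This is precisely the scheme behind Proposition~\ref{uniform:prop-linfty}, transplanted to $P_{>N}u$ with Lemma~\ref{measure:lem-Gaussian} in place of Theorem~\ref{intro:thm-measure}, and it recovers the $\log^{1/2}$ exactly as stated. The trade-off is a longer argument with an additional layer of dyadic summation in~$N$.
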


\begin{proof}
It suffices to prove for $u=u_{L/2},u_{L}$ that
\begin{align}
\bP \Big( \big\| \log\big(T+R+\langle x \rangle\big)^{-\frac{2}{p+3}} u \big\|_{L_t^\infty L_x^\infty([-T,T]\times \R)}>A_0 \Big) &\leq \tfrac{1}{4} A_1 (TR)^{-100}, \label{difference:eq-probability-good-p1} \\
\bP \Big( \sup_{N\geq 1} N^{\frac{1}{2}-\delta} \big\| \log\big(T+R+\langle x \rangle\big)^{-\frac{1}{2}} P_{>N} u\big\|_{L_t^\infty L_x^\infty([-T,T]\times \R)}>A_0 \Big) &\leq \tfrac{1}{4} A_1 (TR)^{-100}. \label{difference:eq-probability-good-p2} 
\end{align}
We first prove \eqref{difference:eq-probability-good-p1}. By using that $\log(y/2)^{-\frac{2}{p+3}}\leq 2 \log(y)^{-\frac{2}{p+3}}$ for all $y\geq 4$ and decomposing the real line into the regions $|x|\leq R$ and $2^{k-1}R<|x|\leq 2^k R$, where $k\geq 1$, we obtain that 
\begin{equation*}
\big\| \log\big(T+R+\langle x \rangle\big)^{-\frac{2}{p+3}} u \big\|_{L_t^\infty L_x^\infty([-T,T]\times \R)}
\leq 2 \sup_{k\geq 0} \log\big(T+2^k R\big)^{-\frac{2}{p+3}} \big\|  u \big\|_{L_t^\infty L_x^\infty([-T,T]\times [-2^k R,2^k R])}.
\end{equation*}
Let $C=C_p\geq 1$ and $c=c_p>0$ be as in Proposition \ref{uniform:prop-linfty} and let $\lambda=A_0^{\frac{1}{4}} \log(T+2^k R)$. For this choice of $\lambda$, it holds that 
\begin{align*}
C \big(\log(T+2^k R)+\lambda\big)^{\frac{2}{p+3}} + C (2^k TR)^{-20} \lambda^2 
\lesssim_C A_0^{\frac{1}{2}} \log(T+2^k R)^{\frac{2}{p+3}} 
\leq \tfrac{1}{2} A_0 \log(T+2^k R)^{\frac{2}{p+3}}.
\end{align*}
Using Proposition \ref{uniform:prop-linfty}, it therefore follows that 
\begin{equation*}
\bP \Big(  \log(T+2^k R)^{-\frac{2}{p+3}} 
\big\| u \big\|_{L_t^\infty L_x^\infty([-T,T]\times [-2^k R,2^k R])} > \tfrac{1}{2} A_0 \Big) 
\leq C e^{-c\lambda} = C (T+2^k R)^{-cA_0^{\frac{1}{4}}}.
\end{equation*}
Using that $A_0$ is sufficiently large, using a union bound, and summing over $k\geq 0$, this readily implies \eqref{difference:eq-probability-good-p1}. The proof of \eqref{difference:eq-probability-good-p2} is similar to the proof of \eqref{difference:eq-probability-good-p1}, except that we use Proposition \ref{uniform:prop-hoelder} instead of Proposition \ref{uniform:prop-linfty}, and we therefore omit the details.
\end{proof}

From the bounds in \eqref{difference:eq-good-event}, one can obtain several other estimates on the solutions $u_L$ and $u_{L/2}$, the difference $w_L$, as well as the functions $Q_L^+$ and $Q_L^-$. Since some of these estimates will be used repeatedly below, we record them in the following lemma.

\begin{lemma}[Consequences of the good event]\label{difference:lem-good-consequences} 
Let $p\geq 3$, let $L\geq 10$, let $R\geq 10$, and let $T\geq 1$. On the good event $\goodind$ from Definition \ref{difference:def-good-event}, we then have the following estimates:
\begin{align}
\sup_{N\geq 1}  \big\| \log\big( T + R + \langle x \rangle\big)^{-\frac{2}{p+3}} P_{\leq N} w_L \big\|_{L_t^\infty L_x^\infty} &\lesssim 1,  \label{difference:eq-good-w-LWP-sup}\\
\sup_{N\geq 1} N^{\frac{1}{2}-\delta} \big\| \log\big( T + R + \langle x \rangle\big)^{-\frac{1}{2}}   P_{>N} w_L \big\|_{L_t^\infty L_x^\infty} &\lesssim 1, \label{difference:eq-good-w-high} \\ 
\sup_{N\geq 1} \big\| \log\big (T+R+  \langle x \rangle \big)^{-\frac{2(p-1)}{p+3}}  P_{\leq N} Q^{\pm}_L \big\|_{L_t^\infty L_x^\infty}&\lesssim 1,    \label{difference:eq-good-Q-pointwise} \\
\sup_{N\geq 1} \big\| \log\big( T + R + \langle x \rangle\big)^{-\frac{p-1}{2}}  N^{\frac{1}{2}-\delta} P_{>N} Q_L^\pm \big\|_{L_t^\infty L_x^\infty} &\lesssim 1, \label{difference:eq-good-Q-high} \\
\sup_{N\geq 1} N^{-\frac{1}{2}-\delta} \big\| \log\big( T + R + \langle x \rangle\big)^{-\frac{p-1}{2}}   \partial_x P_{\leq N} Q_L^\pm\big\|_{L_t^\infty L_x^\infty} &\lesssim 1,  \label{difference:eq-good-Q-derivative} 
\end{align}
where the $L_t^\infty L_x^\infty$-norms are taken over $[-T,T]\times \R$ and the implicit constants depend on $A_0,\delta$, and~$p$.
\end{lemma}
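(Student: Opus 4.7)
The plan is to derive each of the five bounds (\ref{difference:eq-good-w-LWP-sup})-(\ref{difference:eq-good-Q-derivative}) directly from the two pointwise estimates built into the good event (\ref{difference:eq-good-event}) by combining: the triangle inequality, the pointwise derivative bound $|\partial_z F|+|\partial_{\bar z}F|\lesssim|z|^{p-1}$ (which is valid since the assumption $p\ge 3$ makes $F(z,\bar z)=|z|^{p-1}z$ at least $C^1$), the weighted Littlewood-Paley estimate of Corollary \ref{prelim:cor-PN-sup}, and the local Hölder estimates of Lemma \ref{prelim:lem-Hoelder-LWP}. Since $T+R\ge 10$, the weight $\log(T+R+\langle x\rangle)^{-\gamma}$ is admissible in Corollary \ref{prelim:cor-PN-sup}, and it is comparable on any unit-length interval, which will let me localize Hölder estimates without having to track the weight inside each interval.

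The estimates (\ref{difference:eq-good-w-LWP-sup})-(\ref{difference:eq-good-Q-pointwise}) follow by essentially linear manipulations. Estimate (\ref{difference:eq-good-w-high}) is immediate from $P_{>N}w_L=P_{>N}u_L-P_{>N}u_{L/2}$ together with the second line of (\ref{difference:eq-good-event}). For (\ref{difference:eq-good-w-LWP-sup}) I would combine $|w_L|\le|u_L|+|u_{L/2}|\lesssim\log(T+R+\langle x\rangle)^{2/(p+3)}$ with Corollary \ref{prelim:cor-PN-sup} at $\gamma=2/(p+3)$, absorbing the $L^1$-error term by a crude polynomial-growth bound on $w_L$. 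For (\ref{difference:eq-good-Q-pointwise}) the integral formulas (\ref{difference:eq-Q-plus})-(\ref{difference:eq-Q-minus}) together with $|\partial_zF|\lesssim|z|^{p-1}$ yield the pointwise bound $|Q_L^\pm|\lesssim(|u_L|+|u_{L/2}|)^{p-1}\lesssim\log(T+R+\langle x\rangle)^{2(p-1)/(p+3)}$, and one more application of Corollary \ref{prelim:cor-PN-sup} at $\gamma=2(p-1)/(p+3)$ closes the estimate.

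The core of the proof lies in (\ref{difference:eq-good-Q-high})-(\ref{difference:eq-good-Q-derivative}), for which I need a \emph{weighted local Hölder} bound on $Q_L^\pm$. Since the second line of (\ref{difference:eq-good-event}) is essentially the Littlewood-Paley characterization of a weighted $C^{1/2-\delta}$-space, summing $|P_Nu(x)-P_Nu(y)|\lesssim\min(1,N|x-y|)\,N^{-1/2+\delta}\log(T+R+\langle x_0\rangle)^{1/2}$ dyadically for $x,y\in[x_0-2,x_0+2]$ yields
\[
\|u\|_{C^{1/2-\delta}([x_0-2,x_0+2])}\lesssim\log\bigl(T+R+\langle x_0\rangle\bigr)^{1/2},\qquad u\in\{u_L,u_{L/2}\},
\]
where the far-field tails of the Littlewood-Paley sum are absorbed using Lemma \ref{prelim:lem-Hoelder-LWP}. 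Combining this with $|F'(z)|\lesssim|z|^{p-1}$ through the chain rule applied to (\ref{difference:eq-Q-plus})-(\ref{difference:eq-Q-minus}) gives the composition bound
\[
\|Q_L^\pm\|_{C^{1/2-\delta}([x_0-2,x_0+2])}\lesssim\log\bigl(T+R+\langle x_0\rangle\bigr)^{2(p-2)/(p+3)+1/2}+\log\bigl(T+R+\langle x_0\rangle\bigr)^{2(p-1)/(p+3)},
\]
and an elementary algebraic check shows that both exponents are $\le(p-1)/2$ for every $p\ge 3$.

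Given this local Hölder control, (\ref{difference:eq-good-Q-derivative}) is immediate from (\ref{prelim:eq-Hoelder-LWP-3}) at $\alpha=1/2-\delta$, while (\ref{difference:eq-good-Q-high}) follows by writing $P_{>N}=I-P_{\le N}$ and using the kernel representation
\[
\bigl(I-P_{\le N}\bigr)\phi(x_0)=\int\widecheck{\rho}_{\le N}(y)\,\bigl(\phi(x_0)-\phi(x_0-y)\bigr)\,\mathrm{d}y
\]
followed by splitting at $|y|\sim 1$: on $|y|\le 1$ the local Hölder bound produces the main $N^{-(1/2-\delta)}\log^{(p-1)/2}$-term, while on $|y|>1$ the rapid decay of $\widecheck{\rho}_{\le N}$ combined with the polynomial-growth bound on $Q_L^\pm$ delivers a harmless $(T+R+\langle x_0\rangle)^{-D}$-type tail. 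I expect the only genuinely delicate step to be the algebraic verification that the Hölder exponent of $Q_L^\pm$ does not exceed $(p-1)/2$ for all $p\ge 3$; once this is in place, all five bounds reduce to routine applications of the harmonic-analytic tools from Section \ref{section:prelim-harmonic}.
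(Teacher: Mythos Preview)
Your proposal is correct and follows essentially the same route as the paper: the first three estimates come from the triangle inequality, the pointwise bound $|Q_L^\pm|\lesssim(|u_L|+|u_{L/2}|)^{p-1}$, and Corollary~\ref{prelim:cor-PN-sup}, while the last two come from a local $C^{1/2-\delta}$-bound on $Q_L^\pm$ obtained via the product inequality $\|Q_L^\pm\|_{C^\alpha(I)}\lesssim(\|u_L\|_{C^0(I)}+\|u_{L/2}\|_{C^0(I)})^{p-2}(\|u_L\|_{C^\alpha(I)}+\|u_{L/2}\|_{C^\alpha(I)})$ combined with Lemma~\ref{prelim:lem-Hoelder-LWP}. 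The only cosmetic difference is that for \eqref{difference:eq-good-Q-high} you re-derive the kernel identity behind \eqref{prelim:eq-Hoelder-LWP-2} by hand rather than citing it, and you make the exponent check $\tfrac{2(p-2)}{p+3}+\tfrac12\leq\tfrac{p-1}{2}$ explicit, whereas the paper leaves it implicit.
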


Since the definition of the $L_t^\infty L_x^\infty$-norm is based on suprema, the suprema over $N$ and the $L_t^\infty L_x^\infty$-norms in \eqref{difference:eq-good-w-LWP-sup}-\eqref{difference:eq-good-Q-derivative} commute. 

\begin{proof}[Proof of Lemma \ref{difference:lem-good-consequences}:]
The first and second estimate \eqref{difference:eq-good-w-LWP-sup} and \eqref{difference:eq-good-w-high} follow from~Corollary \ref{prelim:cor-PN-sup}, Definition~\ref{difference:def-good-event}, and \eqref{difference:eq-w}. The third estimate \eqref{difference:eq-good-Q-pointwise} follows directly from $|Q_L^\pm|\lesssim |u_L|^{p-1}+|u_{L/2}|^{p-1}$, Corollary \ref{prelim:cor-PN-sup}, and Definition~\ref{difference:def-good-event}. To obtain the fourth estimate \eqref{difference:eq-good-Q-high}, we first observe that
\begin{align*}
&\, \, \,\, \big| Q_L^\pm(x) - Q_L^\pm(y) \big| \\
\lesssim&\, \big( |u_L(x)|+|u_L(y)|+|u_{L/2}(x)|+|u_{L/2}(y)|\big)^{p-2} \big( |u_L(x)-u_L(y)|+|u_{L/2}(x)-u_{L/2}(y)|\big),
\end{align*}
which follows directly from \eqref{difference:eq-Q-plus}, \eqref{difference:eq-Q-minus}, and $p\geq 3$. From this, we obtain for all $\alpha \in [0,1)$ and all compact intervals $I\subseteq \R$ that
\begin{equation*}
\big\| Q_L^\pm \big\|_{C_x^\alpha(I)} \lesssim \big( \| u_L \|_{C_x^0(I)} + \|u_{L/2} \|_{C_x^0(I)} \big)^{p-2} \big( \| u_L \|_{C_x^\alpha(I)} + \|u_{L/2} \|_{C_x^\alpha(I)} \big).
\end{equation*}
Together with Lemma \ref{prelim:lem-Hoelder-LWP} and Definition \ref{difference:def-good-event}, we then readily obtain \eqref{difference:eq-good-Q-high}. Finally, the fifth estimate \eqref{difference:eq-good-Q-derivative} follows from Lemma \ref{prelim:lem-Hoelder-LWP} and \eqref{difference:eq-good-Q-high}. 
\end{proof}

Equipped with Definition \ref{difference:def-good-event} and Lemma \ref{difference:lem-good-consequences}, we can now state and prove the main result of this section.

\begin{proposition}[Difference estimate]\label{difference:prop-main}
Let $p\geq 3$, let $\delta>0$ be as in Definition \ref{difference:def-good-event}, and let $A_2=A_2(A_0,A_1,p,\delta)\geq 1$ be sufficiently large. Let $L\geq 10$, let $R\geq 10$, let  $T\geq 1$, and let $\lambda\geq 1$. Furthermore, let $u_L$ and $u_{L/2}$ be as in Theorem \ref{intro:thm-dynamics} and let $w_L:= u_L - u_{L/2}$. Finally, as in \eqref{intro:eq-MR} and \eqref{prelim:eq-sigma}, let 
\begin{equation}\label{difference:eq-mass-sigma}
M_R(w_L(t)):= \int_{\R} \big| P_{\leq R} w_L(t,x)\big|^2 \sigma_R(x) \dx \qquad \text{and} \qquad \sigma_R(x) := e^{-\langle \frac{x}{R} \rangle}. 
\end{equation}
On the good event $\goodind$, it then holds for all $t,t_0\in [-T,T]$ that
\begin{equation}\label{difference:eq-main}
\begin{aligned}
  M_R\big(w_L(t) \big)
\leq  \exp\Big( A_2 \log(T+R)^{\frac{2(p-1)}{p+3}} |t-t_0| \Big) 
\Big( M_R\big(w_L(t_0)\big) + A_2 R^{-1+8\delta} \log(T+R)^p \Big). 
\end{aligned}
\end{equation}
\end{proposition}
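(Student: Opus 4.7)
The plan is a weighted $L^2$-energy argument for $M_R(w_L(t))$. Differentiating under the integral and substituting $\partial_t w_L$ from \eqref{difference:eq-w-equation}, I get three types of contributions: a kinetic piece from $i\Delta w_L$, and, for each $Q\in\{Q_L^+,Q_L^-\}$, a main nonlinear piece and a commutator piece arising from the decomposition $P_{\leq R}(Q w_L) = Q\,P_{\leq R} w_L + [P_{\leq R}, Q]\,w_L$. The kinetic piece reduces by integration by parts to $2\,\Im\!\int \overline{P_{\leq R}w_L}\,\partial_x P_{\leq R}w_L\,\partial_x\sigma_R\,dx$ (the $|\partial_x P_{\leq R}w_L|^2\sigma_R$ boundary term is real), which via $|\partial_x\sigma_R|\lesssim R^{-1}\sigma_R$, Cauchy--Schwarz, and Lemma \ref{prelim:lem-PR-weight} is $\lesssim M_R(w_L)$, a term negligible compared to the Gronwall rate appearing below.

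For the main nonlinear piece I use that $Q_L^+$ is real-valued: since $F(z,\bar z)=|z|^{p-1}z$ gives $\partial_z F(z,\bar z)=\tfrac{p+1}{2}|z|^{p-1}\in\R$, the integral $\Im\!\int Q_L^+|P_{\leq R}w_L|^2\sigma_R\,dx$ vanishes identically. The analogous $Q_L^-$ contribution is bounded pointwise via \eqref{difference:eq-good-Q-pointwise} on $\goodind$, which gives $|Q_L^-(x)|\lesssim\log(T+R+\langle x\rangle)^{2(p-1)/(p+3)}$: splitting $\R$ into $\{|x|\leq S\}$ and its complement for $S\sim R^{100}$, the former yields $\lesssim\log(T+R)^{2(p-1)/(p+3)}\,M_R(w_L)$ after absorbing the poly-log growth into $\log(T+R)$, while the latter is completely swallowed by the exponential decay of $\sigma_R$.

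The main obstacle is the commutator $[P_{\leq R},Q]\,w_L$, since Lemma \ref{prelim:lem-commutator} requires control of $\|\partial_x Q\|_{L^\infty}$, but $Q_L^\pm$ is only $C^{1/2-}$ (inheriting the regularity of $u_L,u_{L/2}$). To address this, I decompose $Q = P_{\leq N_0}Q + P_{>N_0}Q$ for a dyadic $N_0\in[1,R]$ to be optimized. The low-frequency commutator $[P_{\leq R},P_{\leq N_0}Q]\,w_L$ is estimated by Lemma \ref{prelim:lem-commutator} combined with the derivative bound \eqref{difference:eq-good-Q-derivative} and the pointwise bound \eqref{difference:eq-good-w-LWP-sup}, giving size $\lesssim R^{-1/2+2\delta}N_0^{1/2+\delta}\log(T+R)^{(p-1)/2+2/(p+3)}$ in $L^2(\sigma_R)$; the high-frequency commutator $[P_{\leq R},P_{>N_0}Q]\,w_L$ is expanded by the triangle inequality and each piece is bounded using the $L^\infty$ decay $\|P_{>N_0}Q\|_{L^\infty}\lesssim N_0^{-1/2+\delta}\log(T+R)^{(p-1)/2}$ from \eqref{difference:eq-good-Q-high} together with Lemma \ref{prelim:lem-PR-weight}. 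Optimizing $N_0$, pairing against $\sqrt{\sigma_R}P_{\leq R}w_L$, and applying Young's inequality to absorb a small multiple of $M_R(w_L)$ into the left-hand side yields a forcing of order $R^{-1+O(\delta)}\log(T+R)^{p}$. Collecting everything gives $\bigl|\tfrac{d}{dt}M_R(w_L(t))\bigr|\leq A_2\log(T+R)^{2(p-1)/(p+3)}M_R(w_L(t)) + A_2 R^{-1+8\delta}\log(T+R)^{p}$, and Gronwall's inequality applied in both time directions from $t_0$ gives \eqref{difference:eq-main}.
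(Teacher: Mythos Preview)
Your handling of the kinetic term and your observation that the main $Q_L^+$ piece vanishes (since $\partial_z F=\tfrac{p+1}{2}|z|^{p-1}$ is real) are both correct. The gap is in the commutator estimate. In your decomposition $P_{\leq R}(Qw_L)=Q\,P_{\leq R}w_L+[P_{\leq R},Q]w_L$, the commutator acts on the \emph{full} $w_L$, and this is too costly: expanding the high-frequency piece $[P_{\leq R},P_{>N_0}Q]w_L$, the term $P_{\leq R}\bigl((P_{>N_0}Q)\,w_L\bigr)$ in $L^2(\sigma_R\,dx)$ can only be controlled by $\|P_{>N_0}Q\|_{L^\infty}\,\|\sqrt{\sigma_R}\,w_L\|_{L^2}\lesssim N_0^{-\frac12+\delta}R^{\frac12}\log(T+R)^{O(1)}$, since on $\goodind$ one only has $\|\sqrt{\sigma_R}\,w_L\|_{L^2}\sim R^{1/2}$ with no further gain. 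Paired with $M_R^{1/2}$ and Young, this forces $N_0\gtrsim R^{2}$ to produce an $R^{-1+O(\delta)}$ remainder. But then the low-frequency commutator, via Lemma~\ref{prelim:lem-commutator} and \eqref{difference:eq-good-Q-derivative}, is of size $R^{-\frac12+2\delta}N_0^{\frac12+\delta}\gtrsim R^{\frac12}$ in $L^2(\sigma_R\,dx)$, which is far too large. No choice of $N_0$ balances the two.

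The paper resolves this by splitting $w_L=P_{\leq R}w_L+P_{>R}w_L$ \emph{before} introducing any commutator. For the low-frequency part, $\|\sqrt{\sigma_R}\,P_{\leq R}(Q\,P_{\leq R}w_L)\|_{L^2}$ is bounded directly via Lemma~\ref{prelim:lem-PR-weight} by $\log(T+R)^{\frac{2(p-1)}{p+3}}M_R^{1/2}$, yielding the Gronwall rate with no commutator at all. The commutator $[P_{\leq R},Q_{\leq R}]$ is applied only to $P_{>R}w_L$, where the bound $\|\langle x\rangle^{-\delta}P_{>R}w_L\|_{L^\infty}\lesssim R^{-\frac12+\delta}$ from the good event supplies exactly the missing $R^{-1/2}$ factor, making that contribution $\lesssim R^{-\frac12+4\delta}\log(T+R)^{p/2}$ in $L^2(\sigma_R\,dx)$ as required.
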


\noindent 
We note that, in order for \eqref{difference:eq-main} to be useful in the proof of Theorem \ref{intro:thm-dynamics}, we at least need that
\begin{equation*}
\sup_{t\in [t_0-\tau,t_0+\tau]} \exp\Big( A_2 \log(R)^{\frac{2(p-1)}{p+3}} |t-t_0| \Big) R^{-1+8\delta} \lesssim R^{-\varepsilon},
\end{equation*}
where $\tau>0$ and $\varepsilon>0$ are small constants. For this, it is necessary that $2(p-1)/(p+3)\leq 1$, i.e., that $p\leq 5$, which is the condition from Theorem \ref{intro:thm-dynamics}. We also recall that, as described in Subsection~\ref{section:introduction-results}, the idea behind the proof of Proposition~\ref{difference:prop-main} is to control the growth of $M_R(w_L(t))$ using the $L_t^\infty L_x^\infty$-bounds on $u_L$ and $u_{L/2}$ from Proposition \ref{uniform:prop-linfty} and Gronwall's inequality. 

\begin{proof} Throughout this proof, all implicit constants may depend on $A_0$, $A_1$, $\delta$, and $p$. To simplify the notation, we now omit the subscript $L$ in $w_L$, $Q_L^+$, and $Q_L^-$.  
Using Gronwall's inequality,  \eqref{difference:eq-main} can be reduced to the estimate
\begin{equation}\label{difference:eq-main-differential}
\Big| \tfrac{\mathrm{d}}{\dt} M_R(w(t)) \Big| \lesssim \log(T+R)^{\frac{2(p-1)}{p+3}} M_R(w(t)) + R^{-1+8\delta} \log(T+R)^p.
\end{equation}
In order to prove \eqref{difference:eq-main-differential}, we first obtain from the definition of $M_R$ that  
\begin{align}
\tfrac{\mathrm{d}}{\dt} M_R(w(t)) 
&= 2 \int_\R \Re \big( \overline{P_{\leq R} w} \, \partial_t P_{\leq R} w \big)\sigma_R \, \dx\notag \\
&= - 2 \int_\R \Im \big( \overline{P_{\leq R}w} \, \Delta P_{\leq R} w \big) \sigma_R \, \dx \notag \\
&+ 2 \int_\R \Im \Big( \overline{P_{\leq R}w} \, P_{\leq R} \big( Q_+ w + Q_- \overline{w} \big) \Big) \sigma_R \, \dx \notag \\
&=  2 \int_\R \Im \big( \overline{P_{\leq R}w} \, \partial_x P_{\leq R} w \big) \partial_x \sigma_R \, \dx \label{difference:eq-main-5} \\
&+ 2 \int_\R \Im \Big( \overline{P_{\leq R}w} \, P_{\leq R} \big( Q_+ w + Q_- \overline{w} \big) \Big) \sigma_R \, \dx.  \label{difference:eq-main-6}
\end{align} 
We now estimate \eqref{difference:eq-main-5} and \eqref{difference:eq-main-6} separately.\\

\emph{Case 1: Estimate of \eqref{difference:eq-main-5}.}
Due to the definition of $\sigma_R$, it holds that $|\partial_x \sigma_R| \lesssim R^{-1} \sigma_R$. Together with Cauchy-Schwarz, Young's inequality, and Lemma \ref{prelim:lem-PR-weight}, we then obtain that 
\begin{equation}\label{difference:eq-main-7}
\begin{aligned}
\big| \eqref{difference:eq-main-5} \big| 
&\lesssim R^{-1} \big\| \sqrt{\sigma_R} P_{\leq R}w\big\|_{L_x^2}
\big\| \sqrt{\sigma_R} \partial_x P_{\leq R} w \big\|_{L_x^2}\\ 
&\lesssim R^{-1} \big\| \sqrt{\sigma_R} P_{\leq R}w\big\|_{L_x^2} \Big( 
R \big\| \sqrt{\sigma_R} P_{\leq R}w\big\|_{L_x^2} + R^{-10} \big\| \langle x \rangle^{-10} w \big\|_{L_x^1} \Big) \\
&\lesssim \big\| \sqrt{\sigma_R} P_{\leq R}w\big\|_{L_x^2}^2 + R^{-20} \big\| \langle x \rangle^{-10} w \big\|_{L_x^1}^2. 
\end{aligned}
\end{equation}
The first term in \eqref{difference:eq-main-7} equals $M_R(w(t))$. Using \eqref{difference:eq-good-w-LWP-sup},  the second term in \eqref{difference:eq-main-7} can be bounded by \mbox{$R^{-20} \log(T+R)^{\frac{4}{p+3}}$}. Thus, both terms in \eqref{difference:eq-main-7} lead to acceptable contributions to \eqref{difference:eq-main-differential}. \\

\emph{Case 2: Estimate of \eqref{difference:eq-main-6}.} Using Cauchy-Schwarz, we estimate 
\begin{equation}\label{difference:eq-main-cauchy-schwarz}
\Big| \eqref{difference:eq-main-6} \Big| 
\lesssim \big\| \sqrt{\sigma_R} P_{\leq R} w \big\|_{L_x^2} 
\big\| \sqrt{\sigma_R} P_{\leq R} \big( Q^+ w + Q^- \overline{w} \big) \big\|_{L_x^2}.
\end{equation}
We now use the triangle inequality and that $P_{\leq R}$ commutes with complex conjugation, which allows us to estimate
\begin{equation}\label{difference:eq-main-complex-conjugate}
\big\| \sqrt{\sigma_R} P_{\leq R} \big( Q^+ w + Q^- \overline{w} \big) \big\|_{L_x^2}
\lesssim \max_{Q=Q^+,\overline{Q^-}} \big\| \sqrt{\sigma_R} P_{\leq R} \big( Q w \big) \big\|_{L_x^2}.
\end{equation}
By splitting $w$ into low and high-frequency terms, we also obtain that 
\begin{align}\label{difference:eq-main-splitting}
 \big\| \sqrt{\sigma_R} P_{\leq R} \big( Q w \big) \big\|_{L_x^2}
 \lesssim \big\| \sqrt{\sigma_R} P_{\leq R} \big( Q P_{\leq R} w \big) \big\|_{L_x^2}
 + \big\| \sqrt{\sigma_R} P_{\leq R} \big( Q P_{>R} w \big) \big\|_{L_x^2}.
\end{align}
We now estimate the contributions of $P_{\leq R} w$ and $P_{>R} w$ separately. \\

\emph{Case 2.a: Contribution of $P_{\leq R} w$.} In this case, we show that 
\begin{equation*}
\big\| \sqrt{\sigma_R} P_{\leq R} \big( Q P_{\leq R} w \big) \big\|_{L_x^2} \lesssim \log(T+R)^{\frac{2(p-1)}{p+3}} M_R(w(t))^{\frac{1}{2}} +\log(T+R)^{\frac{2p}{p+3}}  R^{-10}. 
\end{equation*}
Together with \eqref{difference:eq-main-cauchy-schwarz}, \eqref{difference:eq-main-complex-conjugate}, and \eqref{difference:eq-main-splitting}, the contribution of $P_{\leq R} w$ then yields an acceptable contribution to \eqref{difference:eq-main-differential}. 
Using Lemma \ref{prelim:lem-PR-weight}, we obtain that
\begin{equation}
\begin{aligned}\label{difference:eq-main-10} 
  &\, \big\| \sqrt{\sigma_R} P_{\leq R} \big( Q P_{\leq R} w \big) \big\|_{L_x^2} \\
 \lesssim&\, \big\| \sqrt{\sigma_R}  Q P_{\leq R} w \big\|_{L_x^2}+ R^{-10} \big\| \langle x \rangle^{-10} Q P_{\leq R} w \big\|_{L_x^1} \\ 
 \lesssim&\, \big\| \mathbf{1}_{|x|\leq R^2} \sqrt{\sigma_R}  Q P_{\leq R} w \big\|_{L_x^2}
 + \big\| \mathbf{1}_{|x|> R^2} \sqrt{\sigma_R}  Q P_{\leq R} w \big\|_{L_x^2}
 + R^{-10} \big\| \langle x \rangle^{-10} Q P_{\leq R} w \big\|_{L_x^1}.
\end{aligned}
\end{equation}
We first control the first summand in \eqref{difference:eq-main-10}, which is the main term. Using \eqref{difference:eq-good-Q-pointwise}, we obtain that 
\begin{align*}
\big\|  \mathbf{1}_{|x|\leq R^2} \sqrt{\sigma_R}  Q P_{\leq R} w \big\|_{L_x^2}
\lesssim \log(T+R)^{\frac{2(p-1)}{p+3}} \big\| \sqrt{\sigma_R}  P_{\leq R} w \big\|_{L_x^2}
= \log(T+R)^{\frac{2(p-1)}{p+3}} M_R(w(t))^{\frac{1}{2}},
\end{align*}
which is acceptable. We next estimate the second and third summand in \eqref{difference:eq-main-10}, which are minor terms. Using \eqref{difference:eq-good-w-LWP-sup} and \eqref{difference:eq-good-Q-pointwise},  we obtain that 
\begin{align*}
&\, \big\| \mathbf{1}_{|x|> R^2} \sqrt{\sigma_R}  Q P_{\leq R} w \big\|_{L_x^2}
 + R^{-10} \big\| \langle x \rangle^{-10} Q P_{\leq R} w \big\|_{L_x^1} \\
 \lesssim&\, \Big\| \mathbf{1}_{|x|> R^2} \sqrt{\sigma_R} \log(T+R+\langle x \rangle)^{\frac{2p}{p+3}} \Big\|_{L_x^2}
 + R^{-10} \Big\| \langle x \rangle^{-10}  \log(T+R+\langle x \rangle)^{\frac{2p}{p+3}}  \Big\|_{L_x^1}\\
 \lesssim&\, \log(T+R)^{\frac{2p}{p+3}} \Big( \exp\big( - \tfrac{1}{4} R \big) + R^{-10} \Big) \lesssim \log(T+R)^{\frac{2p}{p+3}}  R^{-10},
\end{align*}
which is also acceptable. \\

\emph{Case 2.b: Contribution of $P_{>R}w $.} We show that 
\begin{equation*}
\big\| \sqrt{\sigma_R} P_{\leq R} \big( Q P_{>R} w \big) \big\|_{L_x^2} \lesssim  \log(T+R)^{\frac{2(p-1)}{p+3}} M_R(w(t))^{\frac{1}{2}}+ \log(T+R)^{\frac{p}{2}} R^{-\frac{1}{2}+4\delta}.
\end{equation*}
To simplify the notation below, we define $Q_{\leq R}:= P_{\leq R} Q$ and
$Q_{>R}:= P_{>R} Q$. Furthermore, we let $[P_{\leq R},Q_{\leq R}]$ be the commutator of $P_{\leq R}$ and $Q_{\leq R}$. 
Equipped with this notation, we now split 
\begin{align}
\big\| \sqrt{\sigma_R} P_{\leq R} \big( Q P_{>R} w \big) \big\|_{L_x^2}
&\lesssim 
\big\| \sqrt{\sigma_R} P_{\leq R} \big( Q_{>R} P_{>R} w \big) \big\|_{L_x^2} \label{difference:eq-main-11}\\
&+ \big\| \sqrt{\sigma_R} \, \big[ P_{\leq R}, Q_{\leq R} \big] P_{>R} w  \big\|_{L_x^2} \label{difference:eq-main-12} \\
&+ \big\| \sqrt{\sigma_R} Q_{\leq R} P_{\leq R} P_{>R} w \big\|_{L_x^2}. \label{difference:eq-main-13}
\end{align}
We now estimate \eqref{difference:eq-main-11}, \eqref{difference:eq-main-12}, and \eqref{difference:eq-main-13} separately. Using Lemma \ref{prelim:lem-PR-weight}, \eqref{difference:eq-good-w-high}, and \eqref{difference:eq-good-Q-high}, we obtain 
\begin{align*}
\eqref{difference:eq-main-11}
&\lesssim \big\| \sqrt{\sigma_R}  Q_{>R} P_{>R} w\big\|_{L_x^2(\R)}
+ R^{-10} \big\| \langle x \rangle^{-10} Q_{>R} P_{>R} w\big\|_{L_x^1(\R)} \\
&\lesssim R^{-1+2\delta}  \big\| \sqrt{\sigma_R}  \log(T+R + \langle x \rangle)^{\frac{p}{2}} \big\|_{L_x^2(\R)}
+ R^{-11+2\delta} \big\| \langle x \rangle^{-10}  \log(T+R + \langle x \rangle)^{\frac{p}{2}} \big\|_{L_x^1(\R)} \\
&\lesssim R^{-\frac{1}{2}+2\delta} \log(T+R)^{\frac{p}{2}},
\end{align*}
which is acceptable. Using Lemma \ref{prelim:lem-commutator}, \eqref{difference:eq-good-w-high}, and \eqref{difference:eq-good-Q-derivative}, we estimate 
\begin{align*}
\eqref{difference:eq-main-12} 
&\lesssim R^{-\frac{1}{2}+2\delta} \big\| \langle x \rangle^{-\delta} \partial_x Q_{\leq R} \big\|_{L_x^\infty(\R)} \big\| \langle x \rangle^{-\delta} P_{>R} w \big\|_{L_x^\infty(\R)} \\ 
&\lesssim R^{-\frac{1}{2}+2\delta} R^{\frac{1}{2}+\delta} R^{-\frac{1}{2}+\delta}
\Big\| \langle x \rangle^{-\delta} \log\big( T+R+\langle x\rangle \big)^{\frac{p-1}{2}} \Big\|_{L_x^\infty}
\Big\| \langle x \rangle^{-\delta} \log\big( T+R+\langle x\rangle \big)^{\frac{1}{2}} \Big\|_{L_x^\infty} \\
&\lesssim R^{-\frac{1}{2}+4\delta} \log\big( T+R\big)^{\frac{p}{2}},
\end{align*}
which is acceptable. The remaining term \eqref{difference:eq-main-13} can be treated using a similar argument as in the proof of \eqref{difference:eq-main-10}. Indeed, using \eqref{difference:eq-good-w-LWP-sup} and \eqref{difference:eq-good-Q-pointwise}, we first obtain that 
\begin{align}
\eqref{difference:eq-main-13}
&\lesssim \big\| \mathbf{1}_{|x|\leq R^2} \sqrt{\sigma_R} Q_{\leq R} P_{>R} P_{\leq R} w \big\|_{L_x^2}
+ \big\| \mathbf{1}_{|x|> R^2} \sqrt{\sigma_R} Q_{\leq R} P_{>R} P_{\leq R} w \big\|_{L_x^2} \notag   \\
&\lesssim \log(T+R)^{\frac{2(p-1)}{p+3}}\big\|  \sqrt{\sigma_R}P_{>R} P_{\leq R} w \big\|_{L_x^2}
+ \big\| \mathbf{1}_{|x|> R^2} \sqrt{\sigma_R} \log\big(T+R+\langle x \rangle\big)^{\frac{p}{2}} \big\|_{L_x^2}\label{difference:eq-main-14}
\end{align}
Using $P_{>R}=1-P_{\leq R}$, the triangle inequality, and Lemma \ref{prelim:lem-PR-weight}, the first summand in \eqref{difference:eq-main-14} can be  bounded by 
\begin{equation*}
 \log(T+R)^{\frac{2(p-1)}{p+3}} M_R(w(t))^{\frac{1}{2}} +  \log(T+R)^{\frac{2p}{p+3}}  R^{-10},
\end{equation*}
which is acceptable. Using a direct computation, the second summand in \eqref{difference:eq-main-14} can be bounded by $\log(T+R)^{\frac{p}{2}} e^{-\frac{1}{4}R}$, which is also acceptable.
\end{proof}

Even  under the assumption $p\leq 5$, a direct application of Proposition \ref{difference:prop-main} only allows us to show that $M_R(w_L(t))$ is small on a small time-interval, i.e., a time-interval of size $0<\tau \ll 1$. However, it is possible to show that $M_R(w_L(t))$ is small on a time-interval of size $T\geq 1$ by iterating Proposition~\ref{difference:prop-main}, provided that the $R$-parameter changes in each step of the iteration. In the statement below, $A_0,A_1$, and $A_2$ are as in Definition \ref{difference:def-good-event}, Lemma \ref{difference:lem-good-consequences}, and Proposition \ref{difference:prop-main}, respectively.  

\begin{lemma}[Iterated difference estimate]\label{difference:lem-iterated}
Let $3\leq p\leq 5$, $L\geq 10$, $T\geq 1$, and $R\geq 10$. 
Let $A_3=A_3(A_0,A_1,A_2,\delta,p)$ be sufficiently large, 
let $\tau_0=\tau_0(A_0,A_1,A_2,\delta,p)$ be sufficiently small, and let $J\in \mathbb{N}$ be such that $\tau:=T/J$ satisfies $\tau\leq \tau_0$.
Let $R_j$, where $0\leq j \leq J$, be defined iteratively as 
\begin{equation*}
R_J := R \qquad \text{and} \qquad R_{j-1} := R_j^2~~\text{ for all } 1\leq j \leq J.
\end{equation*}
Finally, assume that 
\begin{equation}\label{difference:eq-iterated-initial}
M_{R_0}\big( w_L(0) \big) \leq A_3 R_0^{-\frac{1}{2}}.
\end{equation}
On the good event $\goodind$, it then holds that 
\begin{equation}\label{difference:eq-iterated-final}
\sup_{t\in [0,T]} M_R(w_L(t))\leq A_3^{J+2} R^{-\frac{1}{2}}.
\end{equation}
\end{lemma}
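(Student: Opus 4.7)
The plan is a finite iteration of Proposition~\ref{difference:prop-main} along the decreasing sequence of radii $R_0 > R_1 > \cdots > R_J$, using the transfer estimate \eqref{prelim:lem-PR-weight-e3} to pass from $R_{j-1}$ to $R_j$ at time $t_j := j\tau$ (permissible because $R_{j-1} = R_j^2 \geq 4 R_j$), and applying Proposition~\ref{difference:prop-main} at the fixed radius $R_j$ on each interval $[t_{j-1}, t_j]$. The transfer produces a negligible $L^1$-error because $|w_L(t,x)|\lesssim \log(T+R+\langle x\rangle)^{2/(p+3)}$ on the good event by \eqref{difference:eq-good-w-LWP-sup}, and Proposition~\ref{difference:prop-main} is applicable at $R_j$ because $\goodind\subseteq \good_{L,T,R_j}$ by \eqref{difference:eq-good-inclusion} and $R_j\geq R$. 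Combining these two ingredients yields the one-step recursion
\[ M_{R_j}(w_L(t_j)) \leq \Lambda_j\bigl(C\,M_{R_{j-1}}(w_L(t_{j-1})) + A_2 R_j^{-1+8\delta}\log(T+R_j)^p\bigr) + (\text{neg.}), \]
with $\Lambda_j := \exp\bigl(A_2\log(T+R_j)^{\theta}\tau\bigr)$ and $\theta := \tfrac{2(p-1)}{p+3}$.

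Unrolling from $M_{R_0}(w_L(0)) \leq A_3 R_0^{-1/2} = A_3 R^{-2^{J-1}}$, the main-term contribution becomes $C^J A_3 R^{-2^{J-1}} \prod_{j=1}^J\Lambda_j$ while the error sum is $\sum_k C^{J-k} A_2 R_k^{-1+8\delta}\log(T+R_k)^p \prod_{l\geq k}\Lambda_l$. The assumption $p\leq 5$ enters through $\theta\leq 1$, which gives $\Lambda_j\leq (T+R_j)^{A_2\tau}$. Splitting the indices according to whether $R_j\ge T$ (so $\log(T+R_j)\leq 2\cdot 2^{J-j}\log R$) or $R_j<T$ (so $\log(T+R_j)\leq\log(2T)$), and using that $R_j = R^{2^{J-j}}$ grows doubly-exponentially in $J-j$ so that the second regime occupies at most $n_* \leq 1+\log_2(\log T/\log R) = O(\log\log T)$ indices, one obtains
\[ \prod_{j=1}^J\Lambda_j \leq R^{C_1 A_2\tau\,2^J}\cdot(2T)^{A_2\tau\,n_*} \]
for an absolute constant $C_1$. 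Choosing $\tau_0 \leq (4C_1 A_2)^{-1}$ and $\delta\le 1/32$ makes the combined exponent of $R$ strictly negative in both the main term and in each error summand, so the main term is bounded by $C^J A_3 R^{-2^{J-2}}(2T)^{A_2\tau n_*}$ and the error sum enjoys geometric decay in $k$, leaving the worst term at $k=J$ of size $A_2 R^{-1/2-\varepsilon}\log(T+R)^p\cdot(2T)^{A_2\tau(n_*+1)}$ for some $\varepsilon>0$.

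The target bound $A_3^{J+2}R^{-1/2}$ now absorbs these estimates: the factor $A_3^J\geq A_3^{T/\tau_0}$ is exponentially large in $T$, the residual $T$-dependence on the left is only $T^{O(\log\log T)}$ (sub-polynomial), and the residual $R$-dependence is either doubly-exponentially small or carries a negative power $R^{-\varepsilon}$ times the polylogarithmic $\log(T+R)^p$. Choosing $A_3$ sufficiently large in terms of $A_2,\tau_0,p,\delta$ therefore closes the estimate. The main technical difficulty is precisely the case $R_j<T$, in which the per-step Gronwall factor $\Lambda_j\sim (2T)^{A_2\tau_0}$ is bounded neither by a constant nor by $R_j^{1/2}$, so no naive per-step induction of the form $M_{R_j}(w_L(t_j))\leq A_3^{j+2}R_j^{-1/2}$ can close with factor $A_3$ per step; only when all $J$ steps are aggregated into the global product does the $O(\log\log T)$ cardinality of this regime meet the doubly-exponentially small $R$-factor generated by the initial data, yielding the claimed bound $A_3^{J+2}R^{-1/2}$.
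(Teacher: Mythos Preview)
Your unrolled-recursion approach is viable but substantially more complicated than needed. You correctly identify that the per-step induction fails when $R_j<T$, but you overlook that this regime can be avoided entirely. The paper's proof begins by observing that the case $T>R$ is trivial: on $\goodind$, \eqref{difference:eq-good-w-LWP-sup} gives $M_R(w_L(t))\lesssim R\log(T+R)^{4/(p+3)}$, while the target $A_3^{J+2}R^{-1/2}\geq A_3^{T/\tau_0}R^{-1/2}$ is exponentially large in $T>R$, so the inequality holds for $A_3$ large enough. After this reduction one has $R_j\geq R\geq T$ for all $j$, so $\log(T+R_j)\leq 2\log R_j$ throughout, and the per-step induction
\[
\sup_{t\in[t_{j-1},t_j]}M_{R_j}(w_L(t))\leq A_3^{j+1}R_j^{-1/2}
\]
closes cleanly: the transfer \eqref{prelim:lem-PR-weight-e3} converts $A_3^{j}R_{j-1}^{-1/2}$ into $A_3^{j}R_j^{-1}$, Proposition~\ref{difference:prop-main} costs only a factor $R_j^{2A_2\tau}$, and with $8\delta+2A_2\tau_0\leq 1/2$ one lands back at $R_j^{-1/2}$. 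A final transfer from each $R_j$ down to $R$ yields the supremum over $[0,T]$. Thus what you call ``the main technical difficulty'' disappears after a one-line reduction, and no global unrolling is needed.

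Two smaller points on your writeup: $T^{O(\log\log T)}$ is super-polynomial, not sub-polynomial as you wrote (your argument survives only because it is still sub-exponential, hence dominated by $A_3^{T/\tau_0}$); and your recursion as stated controls only the endpoint $M_R(w_L(t_J))$, not the full supremum in $t$, so you would still need the final transfer step from $M_{R_j}$ to $M_R$ on each subinterval.
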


\begin{proof}[Proof of Lemma \ref{difference:lem-iterated}:]
In the following proof, all implicit constants are allowed to depend on $\delta$, $p$, $A_0$, $A_1$, and $A_2$, but not on $J$ or $A_3$. As in the proof of Proposition \ref{difference:prop-main}, we write $w$ instead of $w_L$. We may assume that $T\leq R$, since otherwise the desired estimate \eqref{difference:eq-iterated-final} easily follows from \eqref{difference:eq-good-w-LWP-sup} in Lemma \ref{difference:lem-good-consequences}. In particular, it then holds that $R_j \geq T$ for all $0\leq j \leq J$. We define the sequences of times $t_j:= j\tau$, where $0\leq j \leq J$.  To simplify the notation, we also set $t_{-1}:=t_0=0$. We now prove by induction that, for all $0\leq j\leq J$,
\begin{equation}\label{difference:eq-iterated-1}
\sup_{t\in [t_{j-1},t_j]} M_{R_j}(w(t)) \leq A_3^{j+1} R_j^{-\frac{1}{2}}.
\end{equation}
\emph{Base case: $j=0$.} By definition, it holds that 
\begin{equation*}
\sup_{t\in [t_{-1},t_0]} M_{R_0}(w(t))=M_{R_0}(w(0)).
\end{equation*}
Thus, the desired estimate follows directly from our condition \eqref{difference:eq-iterated-initial}.\\ 

\noindent\emph{Induction step: $j-1\rightarrow j$.}
We first recall from \eqref{difference:eq-good-inclusion} that $\goodind\subseteq \good_{L,T,R_j}$. 
Using \eqref{prelim:lem-PR-weight-e3} from Lemma \ref{prelim:lem-PR-weight}, \eqref{difference:eq-good-w-LWP-sup}, and the induction hypothesis, it then holds that
\begin{align*}
M_{R_j}(w(t_{j-1})) &\lesssim M_{R_{j-1}}(w(t_{j-1})) + R_{j}^{-10} \big\| \langle x\rangle^{-10} w(t_{j-1}) \big\|_{L^1(\R)} \\
&\lesssim A_3^j R_{j-1}^{-\frac{1}{2}} + R_{j}^{-10} \log(R_j)^{\frac{2}{p+3}} \lesssim A_3^j R_j^{-1}.
\end{align*}
Using Proposition \ref{difference:prop-main} and $p\leq 5$, we then obtain that
\begin{align}
\sup_{t\in [t_{j-1},t_j]} M_{R_j}(w(t)) 
&\lesssim \sup_{t\in [t_{j-1},t_j]} \exp\big( A_2 \log(2R_j) |t-t_{j-1}|\big) \Big( M_{R_j}(w(t_{j-1})) +  R_{j}^{-1+8\delta} \Big)\notag \\
&\lesssim R_j^{2A_2 \tau} \big( A_3^j R_j^{-1} + R_j^{-1+8\delta} \big) 
\lesssim A_3^j R_j^{-1+8\delta+2A_2\tau}. \label{difference:eq-iterated-2}
\end{align}
Since $\delta$ is  small (see Definition \ref{difference:def-good-event}) and $\tau\leq \tau_0(A_0,A_1,A_2)$ is sufficiently small depending on~$A_2$, we have $-1+8\delta+2A_2\tau \leq -\frac{1}{2}$. Since $A_3$ has been chosen as sufficiently large depending on the implicit constant in \eqref{difference:eq-iterated-2}, we therefore obtain \eqref{difference:eq-iterated-1}. \\

Finally, using \eqref{prelim:lem-PR-weight-e3} from Lemma \ref{prelim:lem-PR-weight}, \eqref{difference:eq-good-w-LWP-sup}, and \eqref{difference:eq-iterated-1}, we obtain that
\begin{align*}
\sup_{t\in [0,T]} M_R(w(t)) &= \max_{0\leq j \leq J} \sup_{t\in [t_{j-1},t_j]} M_R(w(t)) \\
&\lesssim  \max_{0\leq j \leq J} \sup_{t\in [t_{j-1},t_j]} \Big( M_{R_{j}}(w(t)) + R^{-10} \big\| \langle x  \rangle^{-10} w(t) \big\|_{L_x^1(\R)} \Big)  \\
&\lesssim \max_{0\leq j \leq J} \big( A_3^{j+1} R_j^{-\frac{1}{2}} + R^{-10} \log(R)^{\frac{2}{p+3}} \big) 
\lesssim A_3^{J+1} R^{-\frac{1}{2}} \leq A_3^{J+2} R^{-\frac{1}{2}},
\end{align*}
which yields \eqref{difference:eq-iterated-final}.
\end{proof}

\section{Proof of the main theorem}

Equipped with Lemma \ref{difference:lem-probability-good} and Lemma \ref{difference:lem-iterated}, as well as the estimates from Subsection \ref{section:prelim-harmonic}, we can now prove the main theorem of this article.

\begin{proof}[Proof of Theorem \ref{intro:thm-dynamics}:] 
We first prove the quantitative estimate \eqref{intro:eq-as-convergence-estimate}, which directly implies the \mbox{$\bP$-a.s.} convergence of $u_L$ in $C_t^0 C_x^\alpha([-T,T]\times I)$ for all $0\leq \alpha<\frac{1}{2}$, all $T\geq 1$, and all compact intervals $I\subseteq \R$. Since $C^\prime\geq 1$ and $\eta^\prime>0$ in \eqref{intro:eq-as-convergence-estimate} are allowed to depend on $\alpha,C,\eta$, and $T$ from Theorem~\ref{intro:thm-dynamics} and $A_0,A_1,A_2$, and $A_3$ from the previous lemmas, we can also allow all implicit constants below to depend on $\alpha,C,\eta,T,A_0,A_1,A_2$, and $A_3$. In particular, we can replace all $\log(T+R)$-terms from our previous estimates with $\log(R)$-terms.  By increasing the value of $C^\prime$, if necessary, we may also assume that $L\geq L_0$, where $L_0=L_0(\alpha,C,\eta,T,A_0,A_1,A_2,A_3)$ is sufficiently large. Finally, we let $D=D(\alpha,C,\eta,T,A_0,A_1,A_2,A_3)$ be a sufficiently large parameter. \\

We now define parameters $R_0$, $R$, and $N$, all depending on $L$, such that 
\begin{equation}\label{proof:eq-N-R}
R_0 := L^{\frac{\eta}{4}}, \qquad R:= R_0^{2^{-J}}, \qquad \text{and} \qquad  N:= R^{\frac{1}{8}},
\end{equation}
where $J$ is chosen depending on $A_0,A_1,A_2$, and $T$ as in Lemma \ref{difference:lem-iterated}. We note that the relationship between $R$ and $R_0$ is exactly as in Lemma \ref{difference:lem-iterated}, which will be needed later. Due to assumption \eqref{intro:eq-coupling} and Lemma \ref{difference:lem-probability-good}, it holds that 
\begin{equation}\label{proof:eq-probability-estimate}
\bP \Big(  \big\| \phi_L - \phi_{L/2} \big\|_{C^0_x([-(L/2)^\eta,(L/2)^\eta])}
> 2 (L/2)^{-\eta} \Big) + \bP \Big( \Omega \backslash \goodind \Big)
\lesssim L^{-\eta} + R^{-100} \leq L^{-\eta^\prime}. 
\end{equation}
In the last inequality, we also used \eqref{proof:eq-N-R} and that $\eta^\prime$ is sufficiently small depending on $\alpha$, $C$, $\eta$, and~$T$. 
As a result, it suffices to show that 
\begin{equation}\label{proof:eq-inclusion}
\begin{aligned}
&\, \Big\{ \big\| \phi_L - \phi_{L/2} \big\|_{C^0_x([-(L/2)^\eta,(L/2)^\eta])}
\leq 2 (L/2)^{-\eta} \Big\} \medcap \goodind \\ 
\subseteq&\, \Big\{ \big\| u_L - u_{L/2} \big\|_{C_t^0 C_x^\alpha([-T,T]\times [-L^{\eta^\prime},L^{\eta^\prime}])} \leq L^{-\eta^\prime} \Big\}.
\end{aligned}
\end{equation}
Due to the time-reflection symmetry of \eqref{intro:eq-NLS}, we may replace $[-T,T]$ on the right-hand side of~\eqref{proof:eq-inclusion} by $[0,T]$. In all of the following, we implicitly restrict ourselves to the event on the left-hand side of~\eqref{proof:eq-inclusion}. We recall that, due to Definition \ref{difference:def-good-event}, 
\begin{equation}\label{proof:eq-crude}
\big\| \log(R+\langle x \rangle)^{-\frac{2}{p+3}} u_{L/2} \big\|_{L_x^\infty(\R)}
+ \big\| \log(R+\langle x \rangle)^{-\frac{2}{p+3}} u_{L} \big\|_{L_x^\infty(\R)}
\lesssim 1. 
\end{equation}
Using \eqref{proof:eq-crude}, we will be able to easily control the minor error terms from Lemmas \ref{prelim:lem-bernstein}, \ref{prelim:lem-Hoelder-LWP}, and \ref{prelim:lem-PR-weight}, which will be used repeatedly below. 
In order to later use Lemma \ref{difference:lem-iterated}, we now verify that 
\begin{equation}\label{proof:eq-initial}
M_{R_0}\big( u_L(0)-u_{L/2}(0) \big) =  M_{R_0}\big( \phi_L-\phi_{L/2} \big) \leq R_0^{-\frac{1}{2}},
\end{equation}
where the frequency-truncated, localized mass is as in \eqref{intro:eq-MR}. Using Lemma \ref{prelim:lem-PR-weight}, \eqref{proof:eq-crude}, and $R\leq R_0$,  we obtain
\begin{equation}\label{proof:eq-initial-1}
\begin{aligned}
M_{R_0}\big(  \phi_L-\phi_{L/2} \big)^{\frac{1}{2}} 
= \big\| \sqrt{\sigma_{R_0}} P_{\leq R_0} \big(  \phi_L-\phi_{L/2} \big) \big\|_{L_x^2(\R)} 
\lesssim \big\| \sqrt{\sigma_{R_0}} \big(  \phi_L-\phi_{L/2} \big) \big\|_{L_x^2(\R)} +R_0^{-D+1}. 
\end{aligned}
\end{equation}
Using \eqref{proof:eq-crude}, the first term in \eqref{proof:eq-initial-1} can be bounded by 
\begin{equation}\label{proof:eq-initial-2}
\begin{aligned}
&\, \big\| \sqrt{\sigma_{R_0}} \big(  \phi_L-\phi_{L/2} \big) \big\|_{L_x^2(\R)} \\
\lesssim&\, R_0 \big\| \phi_L - \phi_{L/2} \big\|_{C_x^0([-R_0^2,R_0^2])}  
+ \big\| \sqrt{\sigma_{R_0}} \phi_L \big\|_{L_x^2(\R \backslash [-R_0^2,R_0^2])} 
+ \big\| \sqrt{\sigma_{R_0}} \phi_{L/2}\big\|_{L_x^2(\R \backslash [-R_0^2,R_0^2])}  \\
\lesssim&\,  R_0 \big\| \phi_L - \phi_{L/2} \big\|_{C_x^0([-R_0^2,R_0^2])}   + e^{-\frac{1}{4} R_0}.
\end{aligned}
\end{equation}
Using \eqref{proof:eq-N-R}, the first term in \eqref{proof:eq-initial-2} can be bounded by 
\begin{equation}\label{proof:eq-initial-3}
R_0 \big\| \phi_L - \phi_{L/2} \big\|_{C_x^0([-R_0^2,R_0^2])}
\leq R_0 \big\| \phi_L - \phi_{L/2} \big\|_{C_x^0([-(L/2)^\eta,(L/2)^\eta])} \lesssim R_0 L^{-\eta} \lesssim R_0^{-1}.
\end{equation}
By combining \eqref{proof:eq-initial-1}, \eqref{proof:eq-initial-2}, and \eqref{proof:eq-initial-3}, we obtain that
\begin{equation}\label{proof:eq-initial-4}
M_{R_0}\big(  \phi_L-\phi_{L/2} \big)
\lesssim R_0^{-2} + e^{-\frac{1}{2} R_0} + R_0^{-2D+2},
\end{equation}
which clearly implies \eqref{proof:eq-initial}. We now turn to space-time estimates of the difference between $u_{L/2}$ and $u_{L/2}$. We first use a decomposition into high and low frequencies, which yields that 
\begin{align}
\big\| u_L - u_{L/2} \big\|_{C_t^0 C_x^\alpha([0,T]\times [-N,N])}
&\leq\big\| P_{> N} \big( u_L - u_{L/2} \big) \big\|_{C_t^0 C_x^\alpha([0,T]\times [-N,N])} \label{proof:eq-high} \\
&+ \big\| P_{\leq N} \big( u_L - u_{L/2} \big) \big\|_{C_t^0 C_x^\alpha([0,T]\times [-N,N])}. \label{proof:eq-low}
\end{align}
We first estimate the high-frequency term \eqref{proof:eq-high}. By using the triangle inequality, a dyadic decomposition of $P_{>N}$, and the identity $P_K =P_K P_{>K/4}$ for all $K> N$, we obtain that 
\begin{equation}\label{proof:eq-high-estimated-1}
\eqref{proof:eq-high} \leq \sum_{\substack{K\in \dyadic\colon\\[1pt] K>N}} 
\Big( \big\| P_K P_{> K/4} u_L \big\|_{C_t^0 C_x^\alpha([0,T]\times [-N,N])} 
+ \big\| P_K P_{> K/4}  u_{L/2} \big\|_{C_t^0 C_x^\alpha([0,T]\times [-N,N])} \Big).
\end{equation}
Since an identical argument can be used for the $u_{L/2}$-term, we only estimate the $u_L$-term in \eqref{proof:eq-high-estimated-1}. By using Lemma \ref{prelim:lem-Hoelder-LWP}, Definition \ref{difference:def-good-event}, and \eqref{proof:eq-crude}, we obtain that 
\begin{equation}\label{proof:eq-high-estimated-2}
\begin{aligned}
&\,\big\| P_K P_{> K/4} u_L \big\|_{C_t^0 C_x^\alpha([0,T]\times [-N,N])} \\
\lesssim&\,  K^\alpha \big\| P_{> K/4} u_L \big\|_{C_t^0 C_x^0([0,T]\times [-N,N])}  + (KN)^{-D} \big\| \langle x \rangle^{-D} P_{>K/4} u_L \big\|_{L^1(\R)} \\
\lesssim&\, K^{\alpha} K^{-\frac{1}{2}+\delta} \log(R)^{\frac{1}{2}} + (KN)^{-D} \log(R)^{\frac{2}{p+3}}.
\end{aligned}
\end{equation}
In the last estimate, we also used the boundedness of $P_{>K/4}$ on our weighted $L^1(\R)$-space. By using that $\delta$ is sufficiently small depending on $\alpha$ (see Definition \ref{difference:def-good-event}) and by combining \eqref{proof:eq-high-estimated-1} and~\eqref{proof:eq-high-estimated-2}, we then obtain that
\begin{equation}\label{proof:eq-high-estimated-3}
\eqref{proof:eq-high} \lesssim N^{\alpha-\frac{1}{2}+\delta}\log(R)^{\frac{1}{2}} + N^{-2D} \log(R)^{\frac{2}{p+3}}.
\end{equation}
We now turn to the low-frequency term \eqref{proof:eq-low}, which is more difficult to estimate. Using Lemma~\ref{prelim:lem-bernstein}, Lemma~\ref{prelim:lem-Hoelder-LWP}, and \eqref{proof:eq-crude}, we estimate
\begin{equation}\label{proof:eq-low-1}
\begin{aligned}
\eqref{proof:eq-low}
&\lesssim N^\alpha \big\| P_{\leq N} \big( u_L - u_{L/2} \big) \big\|_{C_t^0 C_x^0([0,T]\times [-2N,2N])}  + N^{-2D} \log(R)^{\frac{2}{p+3}}  \\
&\lesssim N^{\frac{1}{2}+\alpha} \big\| P_{\leq N} \big( u_L - u_{L/2} \big) \big\|_{C_t^0 L^2_{x,\loc}([0,T]\times [-4N,4N])}  + N^{-2D+\alpha} \log(R)^{\frac{2}{p+3}},
\end{aligned}
\end{equation}
where $L^2_{x,\loc}$ is as in \eqref{prelim:eq-Lploc}. By using the trivial estimate $L^2_x([-4N,4N])\hookrightarrow L^2_{x,\loc}([-4N,4N])$, using that $\sigma_N(x)\gtrsim 1$ for all $x\in[-4N,4N]$, and using Lemma \ref{prelim:lem-bernstein}, the first term in \eqref{proof:eq-low-1} can be estimated by 
\begin{equation}\label{proof:eq-low-2}
\begin{aligned}
&\, N^{\frac{1}{2}+\alpha} \big\| P_{\leq N} \big( u_L - u_{L/2} \big) \big\|_{C_t^0 L^2_{x,\loc}([0,T]\times [-4N,4N])} \\
\lesssim&\,  N^{\frac{1}{2}+\alpha}
\big\| \sqrt{\sigma_N}  P_{\leq N} \big( u_L - u_{L/2} \big) \big\|_{C_t^0 L_x^2([0,T]\times \R)} \\
\lesssim&\,  N^{\frac{1}{2}+\alpha}
\big\| \sqrt{\sigma_R}  P_{\leq R} \big( u_L - u_{L/2} \big) \big\|_{C_t^0 L_x^2([0,T]\times \R)}+ N^{-D+\frac{1}{2}+\alpha} \log(R)^{\frac{2}{p+3}}. 
\end{aligned}
\end{equation}
Using Lemma \ref{difference:lem-iterated} and \eqref{proof:eq-initial}, the first term in \eqref{proof:eq-low-2} can be estimated by 
\begin{equation}\label{proof:eq-low-3}
N^{\frac{1}{2}+\alpha}
\big\| \sqrt{\sigma_R}  P_{\leq R} \big( u_L - u_{L/2} \big) \big\|_{C_t^0 L_x^2([0,T]\times \R)}  
\lesssim N^{\frac{1}{2}+\alpha} R^{-\frac{1}{2}}.
\end{equation}
By collecting our estimates from \eqref{proof:eq-high}-\eqref{proof:eq-low-3}, we obtain that 
\begin{equation}
\begin{aligned}\label{proof:eq-difference-final}
&\, \big\| u_L - u_{L/2} \big\|_{C_t^0 C_x^\alpha([0,T]\times [-N,N])} \\
\lesssim&\,  N^{\alpha+\delta-\frac{1}{2}} \log(R)^{\frac{1}{2}} + N^{\frac{1}{2}+\alpha} R^{-\frac{1}{2}} + N^{-D+\frac{1}{2}+\alpha} \log(R)^{\frac{2}{p+3}} \leq L^{-\eta^\prime}.
\end{aligned}
\end{equation}
In the last estimate, we used that $\delta>0$ is sufficiently small depending on $\alpha$ (as in Definition~\ref{difference:def-good-event}) and that $\eta^\prime>0$ is sufficiently small depending on $\alpha,C,\eta$, and $T$. This completes our proof of~\eqref{proof:eq-inclusion} and, as a result, our proof of \eqref{intro:eq-as-convergence-estimate}.\\ 

It remains to show that the almost-sure limit $u$ solves \eqref{intro:eq-NLS} in the sense of space-time distributions and preserves the Gibbs measure. For the first claim, we note that the almost-sure convergence of $u_L$ to $u$ in the space $C_t^0 C_{x}^\alpha([-T,T]\times I)$ implies the almost-sure convergence of the power-type nonlinearities $|u_L|^{p-1}u_L$ to $|u|^{p-1}u$ in the same space. Since $u_L$ solves \eqref{intro:eq-NLS} in the sense of space-time distributions, this readily implies that $u$ also solves \eqref{intro:eq-NLS} in the sense of space-time distributions. In order to obtain the second claim, we  let $K\subseteq \R$ be a compact interval and let $F\colon C_x^\alpha(K)\rightarrow \C$ be bounded and continuous. Using the invariance of $\mu_L$ under \eqref{intro:eq-NLS}, the weak convergence of $\mu_L$ to $\mu$, and the almost-sure convergence of $u_L$ to $u$, we obtain for all $t\in \R$ that 
\begin{equation*}
\E \big[ F(u(t)) \big]
= \lim_{L\rightarrow \infty} \E \big[ F(u_L(t)) \big]
= \lim_{L\rightarrow \infty} \E \big[ F(u_L(0)) \big]
= \lim_{L\rightarrow \infty} \int F(\phi) \mathrm{d}\mu_L(\phi) 
= \int F(\phi) \mathrm{d}\mu(\phi). 
\end{equation*}
From this it follows that $\Law_\bP(u(t))=\mu$ for all $t\in \R$. 
\end{proof}

\begin{remark}\label{proof:rem-quantitative-assumption}
Let us briefly assume that assumption \eqref{intro:eq-coupling} is not necessarily satisfied, and we only know that $\phi$ is the $\bP$-as limit of $\phi_L$ in $C_x^0(I)$ for all compact intervals $I\subseteq \R$. In that case, we claim that it is possible to find an increasing sequence $(L_k)_{k=0}^{\infty}\subseteq \dyadic$, which may depend on $\alpha$ and $T$, such that
\begin{equation}\label{proof:eq-qualitative}
\bP\Big( \big\| u_{L_k} - u_{L_{k-1}}\big\|_{C_t^0 C_x^\alpha([-T,T]\times [-2^k,2^k])} >2^{-k} \Big) \leq 2^{-k}
\end{equation}
for all $k\geq 0$. By passing to a further subsequence, which is chosen using a diagonal argument and accounts for different choices of $\alpha$ and $T$, one then sees that the limit of $(u_{L_k})_{k=0}^\infty$ exists $\bP$-a.s. in $C_t^0 C_x^\alpha([-T,T]\times I)$ for all $\alpha\in [0,1/2)$, all $T\geq 1$, and all compact intervals $I\subseteq \R$.

The proof of \eqref{proof:eq-qualitative} under this weaker assumption is close to the proof of Theorem \ref{intro:thm-dynamics}, and the main difference lies in the choice of the parameters. One first chooses $N$ as a sufficiently large power of $2^k$ and, similarly as in \eqref{proof:eq-N-R}, then chooses $R:=N^8$ and $R_0:=R^{2^J}$. The sequence element $L_{k-1}$ is then chosen such that
\begin{equation*}
\bP \Big( R_0 \sup_{L\geq L_{k-1}} \big\| \phi_L - \phi_{L_{k-1}} \big\|_{C_x^0([-R_0^2,R_0^2])} > R_0^{-1} \Big) \leq 2^{-k-1}.
\end{equation*}
With this choice, the final estimate in \eqref{proof:eq-initial-3} then holds with high probability. 
\end{remark}

\begin{appendix}
\section{Quantitative Skorokhod representation theorem}\label{section:Skorokhod}

In this appendix, we prove a quantitative version of the Skorokhod representation theorem, which is needed in the proof of Proposition \ref{measure:prop-coupling}.  To this end, we recall that the function space $\CE^\theta$ and the Wasserstein-distance $\Wasserstein$ were defined in \eqref{measure:eq-def-CE} and \eqref{measure:eq-def-Wasserstein}, respectively. 

\begin{proposition}[A quantitative version of the Skorokhod representation theorem]\label{sk:prop-main}
Let $C\geq 1$ and $0<c\leq 1$ be constants. Furthermore, let $\alpha \in (0,1]$, $\beta>0$, $\kappa \in (0,1]$, and $\theta>0$ be parameters. Let $\mu_L$, where $L\in \dyadic \medcup \{ \infty\}$,  be probability measures on $\CE^\theta(\R)$ which satisfy the following conditions:
\begin{enumerate}[label=(\roman*)]
\item\label{sk:item-h} (Hölder-regularity) For all $R\geq 10$ and $\lambda>0$, it holds that 
\begin{equation}\label{sk:eq-hoelder}
\sup_{L} \mu_L \Big( \Big\{ \| \varphi \|_{C^\alpha([-R,R])} \geq C (\log(R)+\lambda)^{\frac{1}{2}} \Big\} \Big) \leq C e^{-c\lambda}.
\end{equation}
\item\label{sk:item-d} (Density estimates) For all $x\in \R$ and all $a,b\in \R$ satisfying $a<b$, it holds that 
\begin{equation}\label{sk:eq-density}
\sup_{L} \mu_L \Big( \Big\{ \Re \varphi(x) \in [a,b] \Big\} \Big), 
\sup_{L} \mu_L \Big( \Big\{ \Im \varphi(x) \in [a,b] \Big\} \Big)
\leq C |b-a|^\kappa.
\end{equation}
\item\label{sk:item-w} (Wasserstein-estimate) For all $L\in \dyadic$, it holds that 
\begin{equation}\label{sk:eq-Wasserstein}
\Wasserstein(\mu_\infty,\mu_L) \leq C e^{-c L^\beta}.
\end{equation}
\end{enumerate}
Then, there exist constants $C^\prime\geq 1$, $c^\prime>0$, and $\eta>0$, depending only on $C,c,\alpha,\beta,\kappa$, and $\theta$, a common probability space $(\Omega,\mathcal{F},\bP)$,  and random functions $\phi_L \colon (\Omega,\mathcal{F})\rightarrow \CE^\theta(\R)$, where $L\in~\dyadic~\medcup~\{\infty\}$, such that the following properties are satisfied:
\begin{enumerate}[label=(\alph*)]
\item (Coupling) For all $L\in \dyadic \medcup \{\infty\}$, it holds that $\Law_{\bP}(\phi_L)=\mu_L$.
\item (Quantitative almost-sure convergence) For all $L\in \dyadic$, it holds that
\begin{equation}\label{sk:eq-final-estimate}
\bP \Big( \Big\{ \big\| \phi_\infty - \phi_L \big\|_{C^0([-L^\eta,L^\eta])} > L^{-\eta} \Big\} \Big) \leq C^\prime e^{-c^\prime L^\eta}.
\end{equation}
\end{enumerate}
\end{proposition}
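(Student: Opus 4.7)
The plan is to combine the Wasserstein estimate \ref{sk:item-w} with Markov's inequality, using the H\"older regularity \ref{sk:item-h} principally to secure the measure-theoretic framework. For each $L\in\dyadic$, the definition \eqref{measure:eq-def-Wasserstein} of the Wasserstein distance produces a coupling $\gamma_L\in\Gamma(\mu_\infty,\mu_L)$ nearly attaining the infimum:
\begin{equation*}
\int \|\phi-\psi\|_{\CE^\theta(\R)} \, \mathrm{d}\gamma_L(\phi,\psi) \leq 2C e^{-c L^\beta}.
\end{equation*}

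To assemble all the $\gamma_L$ on a single probability space with a common $\phi_\infty$-component, I would disintegrate each $\gamma_L$ against its first marginal as $\gamma_L(\mathrm{d}\phi,\mathrm{d}\psi)=\mu_\infty(\mathrm{d}\phi)\,\kappa_L(\mathrm{d}\psi\mid\phi)$. The H\"older bound \ref{sk:item-h}, applied with $\lambda \sim \log R$, yields $|\phi(x)|\lesssim (\log\langle x\rangle)^{1/2}$ for $\mu_L$-a.e.\ $\phi$, so each $\mu_L$ is concentrated on the separable Polish subspace of $\CE^\theta(\R)$ consisting of continuous functions $\phi$ with $e^{-\theta|x|}|\phi(x)|\to 0$ as $|x|\to\infty$; hence the regular conditional distributions $\kappa_L$ exist. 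Then take $\Omega=\CE^\theta(\R)\times\prod_{L\in\dyadic}\CE^\theta(\R)$ equipped with $\bP=\mu_\infty(\mathrm{d}\phi_\infty)\prod_{L}\kappa_L(\mathrm{d}\phi_L\mid\phi_\infty)$, which makes the $(\phi_L)_{L\in\dyadic}$ conditionally independent given $\phi_\infty$ and yields $\Law_\bP(\phi_\infty,\phi_L)=\gamma_L$ for every $L$.

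For the quantitative convergence \eqref{sk:eq-final-estimate}, the key observation is the elementary inequality $\|\phi\|_{C^0([-L^\eta,L^\eta])}\leq e^{\theta L^\eta}\|\phi\|_{\CE^\theta(\R)}$. Setting $\delta_L:=L^{-\eta}e^{-\theta L^\eta}$, the event in \eqref{sk:eq-final-estimate} is contained in $\{\|\phi_\infty-\phi_L\|_{\CE^\theta(\R)}>\delta_L\}$, whose probability by Markov's inequality and the coupling bound is at most
\begin{equation*}
\frac{2Ce^{-cL^\beta}}{\delta_L} = 2C L^\eta e^{\theta L^\eta - cL^\beta}.
\end{equation*}
For any $\eta\in(0,\beta)$, one has $\theta L^\eta - cL^\beta \leq -c' L^\eta$ for some $c'>0$ and all $L$ past a threshold $L_0$; absorbing the finitely many small $L\leq L_0$ into $C'$ then gives the claimed estimate.

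The main technical obstacle is the Polish-space requirement: $\CE^\theta(\R)$ itself is not separable, so the existence of the regular conditional distributions $\kappa_L$ must be justified by restricting to a separable subspace of sufficient regularity, which is precisely what \ref{sk:item-h} provides. The density assumption \ref{sk:item-d} does not appear essential to the Wasserstein-plus-Markov route sketched here; it would enter a more constructive, discretization-based proof (in the spirit of the classical Skorokhod representation theorem), in which one couples finite-dimensional marginals on a fine spatial grid using the density bounds and then interpolates via H\"older regularity to obtain a coupling of the full functions.
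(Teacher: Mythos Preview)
Your approach is correct and genuinely different from the paper's. The paper follows the classical Skorokhod construction essentially as in \cite[Theorem~6.7]{B99}: it discretizes $\CE^\theta(\R)$ via the values on a spatial grid $\{x_k^L\}\subset[-R_L,R_L]$, partitions the space into cells $A^L(z^L)$ indexed by quantized grid values, and couples $\mu_\infty$ and $\mu_L$ by matching cells whenever $\mu_L(A^L(z^L))\geq(1-\varepsilon_L)\mu_\infty(A^L(z^L))$. This last inequality is where all three hypotheses enter: the Wasserstein bound \ref{sk:item-w} makes the cell probabilities close, the density estimate \ref{sk:item-d} controls the measure of cell boundaries (so that $|\mu_\infty(A^L)-\mu_L(A^L)|$ is small relative to the cells themselves), and the H\"older bound \ref{sk:item-h} is used both to bound the grid-scale parameters and to pass from grid-point closeness to $C^0$-closeness.

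Your route bypasses the partition entirely: disintegrate a near-optimal Wasserstein coupling $\gamma_L$ against its first marginal, tensor the resulting kernels $\kappa_L(\,\cdot\mid\phi_\infty)$ conditionally on $\phi_\infty$, and apply Markov's inequality with the trivial bound $\|\cdot\|_{C^0([-L^\eta,L^\eta])}\leq e^{\theta L^\eta}\|\cdot\|_{\CE^\theta}$. This is shorter and, as you correctly note, does not require the density assumption \ref{sk:item-d} at all; H\"older regularity \ref{sk:item-h} is used only to secure the Polish framework for disintegration and Ionescu--Tulcea. The trade-off is that the paper's construction is more elementary in the measure-theoretic sense---it never invokes the disintegration theorem or infinite-product constructions, relying instead on explicit finite partitions and conditional restrictions. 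Your diagnosis of where \ref{sk:item-d} would enter a discretization-based argument is exactly right.
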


\begin{remark}
We note that the Wasserstein-estimate \eqref{sk:eq-Wasserstein} implies the weak convergence of $\mu_L$ to $\mu_\infty$ with respect to the $\CE^\theta$-norm. From the Skorokhod representation theorem, it therefore follows\footnote{The Skorokhod representation theorem also requires that $\mu_\infty$ has separable support, but this follows directly from the Hölder-estimate in \eqref{sk:eq-hoelder}.} that there exists a common probability space $(\Omega,\mathcal{F},\bP)$ and random variables $\phi_L\colon (\Omega,\mathcal{F})\rightarrow \CE^\theta(\R)$, where $L\in \dyadic \medcup \{\infty\}$, such that $\phi_L$ converges to $\phi_\infty$ $\bP$-a.s. For our purposes, however, this is insufficient, since we require a more quantitative estimate of the difference between $\phi_L$ and $\phi_\infty$. This more quantitative estimate is provided by \eqref{sk:eq-final-estimate}.
\end{remark}

\begin{proof}
We follow the structure of the proof of the standard Skorokhod representation theorem given in \cite[Theorem 6.7]{B99}, but make each of the steps more quantitative in $L$. Throughout the proof, we can assume that $L$ is sufficiently large depending on the parameters appearing in \ref{sk:item-h}-\ref{sk:item-w}, i.e., 
\begin{equation*}
L\geq L_0(C,c,\alpha,\beta,\kappa,\theta). 
\end{equation*}
The reason is that, for $L<L_0$, \eqref{sk:eq-final-estimate} is trivially satisfied, and we can therefore choose $\phi_L$ as any random variable satisfying $\Law_\bP(\phi_L)=\mu_L$. The new parameter $\eta$ is chosen as small depending on the parameters appearing  \ref{sk:item-h}-\ref{sk:item-w}. Furthermore, the new parameters $C^\prime\geq 1$ and $c^\prime>0$ are chosen as sufficiently large and small depending on both the parameters appearing  \ref{sk:item-h}-\ref{sk:item-w} and $\eta$, respectively.
To simplify the notation, we write $\mu:=\mu_\infty$.  \\

\noindent \emph{Step 1: Construction of a suitable partition of $\CE^\theta(\R)$.} 
We now introduce several parameters depending on $L\geq L_0$. First, we define $R_L \geq 1$, $K_L \in \mathbb{N}$, and $\varepsilon_L,\widetilde{\varepsilon}_L>0$ as 
\begin{equation}\label{sk:eq-parameters-collection}
R_L:=L^\eta, \qquad K_L:=\big\lceil R_L L^{\frac{2\eta}{\alpha}} \big\rceil, \qquad \varepsilon_L := e^{-\frac{1}{16} c\kappa L^\beta}, \qquad  \text{and} \qquad \widetilde{\varepsilon}_L := e^{-\frac{1}{16} c\kappa L^\beta},
\end{equation}
where $\lceil \, \cdot\, \rceil$ is the ceiling function. While the two parameters  $\varepsilon_L$ and $\widetilde{\varepsilon}_L$ can be chosen to have the same value, they play different roles in our argument below, and we therefore use different notation for them. Second, we define the step-size $\delta_L>0$ and the grid points $x^L_k$, where $-K_L\leq k \leq K_L$, as 
\begin{equation}\label{sk:eq-parameters-delta}
\delta_L := \frac{R_L}{K_L} \leq L^{-\frac{2\eta}{\alpha}} \qquad \text{and} \qquad x^L_k:= k \delta_L.
\end{equation}
We note that $x_{-K_L}=-R_L$ and $x_{K_L}=R_L$, and hence the grid points are all contained in the interval $[-R_L,R_L]$. Third, we choose a parameter $M_L>0$ such that\footnote{In order to prove \eqref{sk:eq-prob-BL} below, it is important that the probability on the left-hand side of \eqref{sk:eq-parameter-ML} is not too small.}
\begin{equation}\label{sk:eq-parameter-ML}
\mu \Big( \bigcup_{k=-K_L}^{K_L} \big\{ \varphi(x^L_k) \not \in [-M_L,M_L] \big\} \Big) = \widetilde{\varepsilon}_L.
\end{equation}
This is possible since, due to \eqref{sk:eq-hoelder} and \eqref{sk:eq-density}, the function 
\begin{equation*}
M\in [0,\infty) \mapsto \mu \Big( \bigcup_{k=-K_L}^{K_L} \big\{ \varphi(x^L_k) \not \in [-M,M] \big\} \Big) 
\end{equation*}
is continuous\footnote{For more details on this, see the estimates in \eqref{sk:eq-prob-difference-6} and \eqref{sk:eq-prob-difference-8} below.}, takes the value one at $M=0$, and tends to zero as $M$ tends to infinity. From \eqref{sk:eq-hoelder} and \eqref{sk:eq-parameter-ML}, it follows that 
\begin{equation*}
\widetilde{\varepsilon}_L 
\leq \mu \Big( \big\{ \| \varphi \|_{C^0([-R_L,R_L])} \geq M_L \big\} \Big) 
\leq C \exp\Big( - c \big( C^{-2} M_L^2 - \log(R_L) \big) \Big),
\end{equation*}
which implies the upper bound
\begin{equation}\label{sk:eq-parameter-ML-upper}
M_L \lesssim_{C,c,\kappa} L^{\frac{\beta}{2}}\leq L^\beta.
\end{equation}
Fourth, we define $J_L \in \mathbb{N}$ and $\tau_L>0$ as
\begin{equation}\label{sk:eq-parameters-J-tau}
J_L := \big\lceil 8 L^\eta M_L \big \rceil \qquad \text{and} \qquad  \tau_L := \frac{M_L}{J_L}\leq \tfrac{1}{8} L^{-\eta}.
\end{equation}
Equipped with the grid points $x^L_k$ and the parameter $\tau_L$, we now define  $\Zc^L$ as the set of all functions 
\begin{equation}\label{sk:eq-zL}
z^L \colon \big\{ -K_L,-K_L+1,\hdots, K_L-1,K_L \big\} \rightarrow \big\{ j_1 \tau_L + i j_2 \tau_L\colon -J_L \leq j_1,j_2 \leq J_L\big\}. 
\end{equation}
The set $\Zc^L$ will be used to discretize functions, see \eqref{sk:eq-AL} below. From the definition of $\Zc^L$, together with \eqref{sk:eq-parameters-collection}, \eqref{sk:eq-parameter-ML-upper}, and \eqref{sk:eq-parameters-J-tau}, it follows that
\begin{equation}\label{sk:eq-zL-cardinality}
\# \Zc^L  = (2J_L+1)^{2(2K_L+1)} \leq e^{cL^{\frac{4\eta}{\alpha}}}.
\end{equation}
Finally, for each $z^L \in \Zc^L$, we define
\begin{equation}\label{sk:eq-AL}
\AL := \bigcap_{k=-K_L}^{K_L} \Big\{ \varphi \in \CE^\theta(\R) \colon  \Re\big(\varphi(x^L_k)-z^L_k\big),\Im\big(\varphi(x^L_k)-z^L_k\big) \in [0,\tau_L) \Big\}.
\end{equation}
From \eqref{sk:eq-zL} and \eqref{sk:eq-AL}, it directly follows that the sets $(\AL)_{z^L\in \Zc^L}$ are disjoint. In order to obtain \eqref{sk:eq-prob-AL} below, we need to restrict ourselves to $z^L \in \Zc^L$ for which the probabilities of $\AL$ are not too small. For this reason, we define 
\begin{equation}\label{sk:eq-ZLgood}
\Zc^L_g := \Big\{ z^L \in \Zc^L \colon \mu \big( \AL \big) \geq \widetilde{\varepsilon}_L  \Big\}.
\end{equation}
Equipped with $\Zc^L_g$, we can now define the good and bad events $G^L$ and $B^L$ as 
\begin{equation}\label{sk:eq-good-and-bad}
G^L = \bigcup_{z^L \in \Zc^L_g} \AL \qquad \text{and} \qquad B^L:= \CE^\theta(\R) \backslash G^L.
\end{equation}

\noindent \emph{Step 2: The probabilities of $\AL$, where $z^L \in \Zc^L_g$, and $B^L$.} In this step, we prove that 
\begin{align}
\mu_L(\AL) &\geq (1-\varepsilon_L) \mu(\AL) \qquad \text{for all } z^L\in \Zc^L_g,\label{sk:eq-prob-AL} \\
\mu_L(B^L) &\geq (1-\varepsilon_L) \mu(B^L), \label{sk:eq-prob-BL} \\
\mu(B^L) &\leq e^{-\frac{1}{32} c\kappa L^\beta}.\label{sk:eq-probability-BL-upper}
\end{align}
In order to prove \eqref{sk:eq-prob-AL}, \eqref{sk:eq-prob-BL}, and \eqref{sk:eq-probability-BL-upper},  we first show that 
\begin{equation}\label{sk:eq-prob-AL-difference}
\big| \mu(\AL) - \mu_L(\AL) \big| \leq e^{-\frac{1}{4}c\kappa L^\beta} \qquad \text{for all } z^L \in \Zc^L.
\end{equation}
We remark that, in contrast to \eqref{sk:eq-prob-AL}, this estimate even holds for $z^L\in \Zc^L \backslash\Zc^L_g$. In order to obtain~\eqref{sk:eq-prob-AL-difference}, we first use \eqref{sk:eq-Wasserstein}. Due to this, there exists a coupling $\gamma_L \in \Gamma(\mu,\mu_L)$ satisfying
\begin{equation}\label{sk:eq-wasserstein-used}
\int \| \varphi - \varphi_L \|_{\CE^\theta(\R)} \dgamma_L(\varphi,\varphi_L) \leq 2Ce^{-cL^\beta}.
\end{equation}
Using the coupling $\gamma_L$, we can then rewrite the probability as 
\begin{align}
 \big| \mu(\AL) - \mu_L(\AL) \big| 
 =&\, \Big| \int \big( \ind\{ \varphi \in \AL \big\} - \ind\{ \varphi_L \in \AL \big\} \big) \dgamma_L(\varphi,\varphi_L) \Big| \notag \\
 \leq&\, \int \big|  \ind\{ \varphi \in \AL \big\} - \ind\{ \varphi_L \in \AL \big\} \big| \dgamma_L(\varphi,\varphi_L). \label{sk:eq-prob-difference-2} 
 \end{align}
From the definition of $\AL$, it follows that if $\varphi \in \AL$ and $\varphi_L\not \in \AL$, then there must exist an index $k\in\{-K_L,\hdots,K_L\}$ and an $F\in \{\Re,\Im\}$ such that
\begin{equation*}
    F(\varphi(x^L_k)-z^L_k)\in [0,\tau_L) \qquad \text{and} \qquad F(\varphi_L(x^L_k)-z^L_k)\not \in [0,\tau_L).
\end{equation*}
By also using a similar argument in the case $\varphi \not \in \AL$ and $\varphi_L \in \AL$, we then obtain that  
 \begin{align}
 \hspace{-2ex}\eqref{sk:eq-prob-difference-2} \leq&\, \sum_{F=\Re,\Im} \sum_{k=-K_L}^{K_L}
 \hspace{-0.5ex}\int \ind \big\{ F(\varphi(x^L_k)-z_k^L) \in [0,\tau_L), F(\varphi_L(x^L_k)-z_k^L) \not \in [0,\tau_L) \big\} \dgamma_L(\varphi,\varphi_L) \label{sk:eq-prob-difference-3} \\
 +&\, \sum_{F=\Re,\Im} \sum_{k=-K_L}^{K_L}\hspace{-0.5ex}
 \int \ind \big\{ F(\varphi(x^L_k)-z_k^L) \not \in [0,\tau_L), F(\varphi_L(x^L_k)-z_k^L) \in [0,\tau_L) \big\} \dgamma_L(\varphi,\varphi_L). \label{sk:eq-prob-difference-4}
\end{align}
Since the estimates of \eqref{sk:eq-prob-difference-3} and \eqref{sk:eq-prob-difference-4} are similar, we only treat \eqref{sk:eq-prob-difference-3}. 
To control \eqref{sk:eq-prob-difference-3}, we introduce the additional parameter 
\begin{equation}\label{sk:eq-prob-rho}
\rho_L := e^{-\frac{1}{2} c L^{\beta}}.
\end{equation}
By splitting the interval $[0,\tau_L)$ into $[0,\rho_L)$, $[\rho_L,\tau_L-\rho_L)$, and $[\tau_L-\rho_L,\tau)$, we then obtain that 
\begin{align}
&\,\hspace{1ex} \eqref{sk:eq-prob-difference-3}  \notag \\
\leq&\,  \sum_{F=\Re,\Im} \sum_{k=-K_L}^{K_L}
 \hspace{-0.5ex}\int \ind \big\{ F(\varphi(x^L_k)-z_k^L) \in [\rho_L,\tau_L-\rho_L), F(\varphi_L(x^L_k)-z_k^L) \not \in [0,\tau_L) \big\} \dgamma_L(\varphi,\varphi_L)\label{sk:eq-prob-difference-5} \\ 
 +&\, \sum_{F=\Re,\Im} \sum_{k=-K_L}^{K_L}
 \hspace{-0.5ex}\int \ind \big\{ F(\varphi(x^L_k)-z_k^L) \in [0,\rho_L) \cup [\tau_L-\rho_L,\tau_L) \big\} \dgamma_L(\varphi,\varphi_L)\label{sk:eq-prob-difference-6}. 
\end{align}
Using \eqref{sk:eq-wasserstein-used}, the first term \eqref{sk:eq-prob-difference-5} can be estimated by 
\begin{equation}\label{sk:eq-prob-difference-7}
\begin{aligned}
\eqref{sk:eq-prob-difference-5}
&\leq \sum_{F=\Re,\Im} \sum_{k=-K_L}^{K_L} 
\int \ind \big\{ |\varphi(x^L_k) - \varphi_L(x^L_k)|\geq \rho_L \big\} \dgamma_L(\varphi,\varphi_L) \\ 
&\leq \rho_L^{-1}  \sum_{F=\Re,\Im} \sum_{k=-K_L}^{K_L} 
 e^{\theta | x^L_k |} 
 \int \big\| \varphi - \varphi_L \big\|_{\CE^\theta(\R)} \dgamma_L(\varphi,\varphi_L) \\
 &\leq 4C (2 K_L +1) \rho_{L}^{-1} e^{\theta R_L}  e^{-cL^\beta}.
\end{aligned}
\end{equation}
Using \eqref{sk:eq-hoelder}, the second term \eqref{sk:eq-prob-difference-6} can be estimated by\footnote{This estimate is a more quantitative version of the condition $P(\partial B^m_i)=0$ in \cite[(6.8)]{B99}. Roughly speaking, we not only show that $\partial \AL$ has zero measure, but show that a $\rho_L$-neighborhood of $\partial \AL$ has measure $\lesssim K_L \rho_L^\kappa$.}
\begin{equation}\label{sk:eq-prob-difference-8}
\begin{aligned}
\eqref{sk:eq-prob-difference-6}
&= \sum_{F=\Re,\Im} \sum_{k=-K_L}^{K_L}
 \hspace{-0.5ex}\mu \Big( \big\{ F(\varphi(x^L_k)-z_k^L) \in [0,\rho_L) \cup [\tau_L-\rho_L,\tau) \big\} \Big) \\ 
&\leq 4C (2K_L+1) \rho_L^\kappa. 
\end{aligned}
\end{equation}
In total, we therefore obtain that
\begin{align*}
\big| \mu(\AL) - \mu_L(\AL) \big|
\leq 4C (2K_L+1) \big( \rho_L^{-1} e^{\theta R_L} e^{-cL^\beta} + \rho_L^\kappa \big). 
\end{align*}
From \eqref{sk:eq-parameters-collection} and \eqref{sk:eq-prob-rho}, it follows that
\begin{align*}
4C (2K_L+1) \big( \rho_L^{-1} e^{\theta R_L} e^{-cL^\beta} + \rho_L^\kappa \big) 
\leq 8C L^{\frac{4\eta}{\alpha}} \Big( e^{\theta L^\eta} e^{\frac{1}{2}cL^\beta} e^{-cL^\beta} 
+ e^{-\frac{1}{2} c\kappa L^{\beta}} \Big) \leq e^{-\frac{1}{4}c\kappa L^\beta}, 
\end{align*}
and we therefore obtain the desired estimate \eqref{sk:eq-prob-AL-difference}.
It remains to use \eqref{sk:eq-prob-AL-difference} to prove \eqref{sk:eq-prob-AL}, \eqref{sk:eq-prob-BL}, and \eqref{sk:eq-probability-BL-upper}. In order to see \eqref{sk:eq-prob-AL}, we let $z^L \in \Zc^L_g$ be arbitrary. Using \eqref{sk:eq-parameters-collection}, \eqref{sk:eq-ZLgood}, \eqref{sk:eq-prob-AL-difference}, we obtain that 
\begin{align*}
\frac{\mu_L(\AL)}{\mu(\AL)}\geq 1 - \frac{\big|\mu_L(\AL)-\mu(\AL)\big|}{\mu(\AL)} \geq 1 - \frac{1}{\widetilde{\varepsilon}_L} e^{-\frac{1}{4}c\kappa L^\beta}  \geq 1- \varepsilon_L, 
\end{align*}
which proves \eqref{sk:eq-prob-AL}. To obtain \eqref{sk:eq-prob-BL}, we first use \eqref{sk:eq-zL-cardinality}, \eqref{sk:eq-good-and-bad}, and \eqref{sk:eq-prob-AL-difference}, which yield that 
\begin{align*}
&\, \big| \mu\big(B^L\big) - \mu_L\big(B^L\big) \big| 
= \big| \mu\big(G^L\big) - \mu_L\big(G^L\big) \big| 
\leq  \sum_{z^L \in \Zc^L_g} \big| \mu\big( \AL \big) - \mu_L \big( \AL \big) \big| \\[-0.5ex]
\leq&\, \big( \# \Zc^L \big) e^{-\frac{1}{4} c\kappa L^\beta} \leq e^{-\frac{1}{8} c\kappa L^\beta}. 
\end{align*}
From \eqref{sk:eq-parameter-ML}, we also have that $\mu(B^L)\geq \widetilde{\varepsilon}_L$.  Together with \eqref{sk:eq-parameters-collection}, it therefore follows that 
\begin{equation*}
\frac{\mu_L(B^L)}{\mu(B^L)}\geq 1 - \frac{\big|\mu_L(B^L)-\mu(B^L)\big|}{\mu(B^L)} \geq 1 - \frac{1}{\widetilde{\varepsilon}_L} e^{-\frac{1}{8} c\kappa L^\beta} \geq 1-\varepsilon_L, 
\end{equation*}
which proves \eqref{sk:eq-prob-BL}. Finally, from \eqref{sk:eq-parameter-ML}, \eqref{sk:eq-zL-cardinality}, and \eqref{sk:eq-good-and-bad}, it follows that 
\begin{equation*}
\mu\big( B^L \big) \leq \widetilde{\varepsilon}_L +\sum_{z^L \in \Zc^L\backslash \Zc^L_g} \mu\big( \AL \big)
\leq \big(1+\# \Zc^L \big) \widetilde{\varepsilon}_L \leq e^{-\frac{1}{32} c\kappa L^\beta},
\end{equation*}
which yields \eqref{sk:eq-probability-BL-upper}.\\

\noindent\emph{Step 3: A collection of random variables.} Given any probability measure $\nu$ on $(S,\mathcal{B}(S))$, where $S$ is a metric space and $\mathcal{B}(S)$ is the Borel $\sigma$-algebra, one can always find\footnote{For example, one can simply choose the probability space as $(S,\mathcal{B}(S),\nu)$ and define the random variable as the identity map on $S$.} a probability space supporting an $S$-valued random variable whose law is given by $\nu$. By passing to infinite product spaces, one can therefore find a probability space $(\Omega,\mathcal{F},\bP)$ supporting random variables $\phi$, $(\phi_{L,z^L})_{L\in \dyadic,z^l\in \Zc^L}$, $(\phi_{L,b})_{L\in \dyadic}$, $(\psi_L)_{L\in \dyadic}$, and $U$, all independent of one another, such that the following properties are satisfied: 
\begin{enumerate}[label=(\roman*)]
\item It holds that $\Law_\bP(\phi)=\mu$.
\item For all $L \in\dyadic$ and $z^L \in \Zc^L_g$, it holds that 
$\Law_\bP(\phi_{L,z^L})=\mu_L(\, \cdot \,| \AL )$, where the measure is obtained by  conditioning  $\mu_L$ on $\AL$.  Due to \eqref{sk:eq-prob-AL}, this is well-defined. 
\item For all $L\in \dyadic$, it holds that $\Law_\bP(\phi_{L,b})=\mu_L(\cdot|B^L)$. Due to \eqref{sk:eq-prob-BL}, this is well-defined.
\item For all $L\in \dyadic$, it holds that $\Law_\bP(\psi_L)=\nu_L$, where $\nu_L$ is defined as 
\begin{equation}\label{sk:eq-psi-L}
\begin{aligned}
\nu_L(A) &= \varepsilon_L^{-1} \sum_{z^L\in \Zc^L_g}
\mu_L ( A | \AL ) \big( \mu_L(\AL) - (1-\varepsilon_L) \mu(\AL)\big) \\
&+ \varepsilon_L^{-1} \mu_L( A| B^L) 
\big( \mu_L(B^L) - (1-\varepsilon_L) \mu(B^L)\big).
\end{aligned}
\end{equation}
Using \eqref{sk:eq-prob-AL}, \eqref{sk:eq-prob-BL}, and that $(\AL)_{z^L\in \Zc^L_g}$ and $B^L$ form a partition of $\CE^\theta(\R)$, one can easily check that $\nu_L$ is a probability measure.
\item The random variable $U$ is uniformly distributed on $[0,1]$.
\end{enumerate}
Due to the above, the common probability space $(\Omega,\mathcal{F},\bP)$ and the random variable $\phi=\phi_\infty$ from the statement of this proposition have now been defined, and it remains to define the random variables $\phi_L$, where $L\in\dyadic$.\\

\noindent\emph{Step 4: The coupling.} For all $L\in \dyadic$, we define
\begin{equation}\label{sk:eq-phi-L}
\phi_L := \sum_{z^L \in \Zc^L_g} \ind\big\{ \phi \in \AL, U \leq 1 - \varepsilon_L \big\} \phi_{L,z^L} 
+ \ind\big\{ \phi \in B^L, U\leq 1 -\varepsilon_L \big\} \phi_{L,b}
+ \ind\big\{ U > 1 - \varepsilon_L \big\} \psi_L .
\end{equation}
We now show that $\Law_\bP(\phi_L)=\mu_L$. To this end, we first 
note that the supports of the indicator functions 
\begin{equation*}
\ind\big\{ \phi \in \AL, U \leq 1 - \varepsilon_L \big\}, \qquad \ind\big\{ \phi \in B^L, U\leq 1 -\varepsilon_L \big\}, \qquad \text{and} \qquad 
\ind\big\{ U > 1 - \varepsilon_L \big\}
\end{equation*}
are disjoint. By using the independence and laws of the random variables from the previous step, we then obtain for all Borel sets $A\subseteq \CE^\theta$ that 
\begin{align*}
\bP(\phi_L \in A) 
&= \sum_{z^L\in \Zc^L_g} \bP \big( U \leq 1-\varepsilon_L, \, \phi\in \AL, \, \phi_{L,z^L} \in A \big) 
+ \bP \big( U \leq 1-\varepsilon_L, \, \phi\in B^L, \, \phi_{L,b} \in A \big) \\
&\hspace{4.5ex}+ \bP\big( U>1-\varepsilon_L, \, \psi_L \in A \big) \\[1.5ex]
&= (1-\varepsilon_L) \sum_{z^L\in \Zc^L_g} \mu\big(\AL\big) \mu_L\big( A \big| \AL \big) + (1-\varepsilon_L) \mu\big(B^L \big) \mu_L\big( A \big| B^L \big) + \varepsilon_L \nu_L(A) 
=\mu_L(A),
\end{align*}
where the last identity follows directly from the definition of $\nu_L$.\\

\noindent \emph{Step 5: Estimating the difference of $\phi$ and $\phi_L$.}
Using the definition of the grid points from \eqref{sk:eq-parameters-delta}, we obtain that
\begin{equation*}
\big\| \phi - \phi_L\big\|_{C^0([-R_L,R_L])}
\leq \max_{-K_L\leq k \leq K_L} \big| \phi(x^L_k) - \phi_L(x^L_k) \big|
+ \delta_L^{\alpha} \big( \big\| \phi \big\|_{C^\alpha([-R_L,R_L]}
+ \big\| \phi_L \big\|_{C^\alpha([-R_L,R_L]} \big).
\end{equation*}
As a result, we obtain that 
\begin{align}
&\, \bP \Big( \big\| \phi - \phi_L\big\|_{C^0([-R_L,R_L])} > L^{-\eta} \Big) \notag \\ 
\leq&\, \bP\Big( \max_{-K_L \leq k \leq K_L} \big| \phi(x^L_k) - \phi_L(x^L_k) \big|>\tfrac{1}{3} L^{-\eta} \Big) \label{sk:eq-as-1} \\
+&\,\bP \Big(  \| \phi \|_{C^\alpha([-R_L,R_L])} >\tfrac{1}{3} \delta_L^{-\alpha} L^{-\eta}\Big)
+ \bP \Big(  \| \phi_L \|_{C^\alpha([-R_L,R_L])} >\tfrac{1}{3} \delta_L^{-\alpha} L^{-\eta}\Big).\label{sk:eq-as-2}
\end{align}
It follows from the definition of $\phi_L$ that, on the event $\{ \phi \in G^L \} \medcap \{ U \leq 1-\varepsilon_L\}$, there exists a $z^L\in \Zc^L_g$ such that $\phi,\phi_L\in \AL$. On the event $\{ \phi \in G^L \} \medcap \{ U \leq 1-\varepsilon_L\}$, it then follows from the definition of $\AL$ that
\begin{equation*}
 \max_{-K_L \leq k \leq K_L} \big| \phi(x^L_k) - \phi_L(x^L_k) \big| \leq 2 \tau_L. 
\end{equation*}
Since $6\tau_L \leq L^{-\eta}$, we then obtain that
\begin{equation}\label{sk:eq-as-3}
\eqref{sk:eq-as-1} \leq \bP \big(  \phi\not\in G^L  \big) + \bP\big(  U > 1-\varepsilon_L \big) \leq e^{-\frac{1}{32} c\kappa L^\beta} + \varepsilon_L \leq \frac{1}{2} C^\prime e^{-c^\prime L^\eta},
\end{equation}
where we also used \eqref{sk:eq-parameters-collection} and \eqref{sk:eq-probability-BL-upper}. Using  \eqref{sk:eq-hoelder}, \eqref{sk:eq-parameters-collection}, and \eqref{sk:eq-parameters-delta},  we also obtain that 
\begin{equation}\label{sk:eq-as-4}
\begin{aligned}
\eqref{sk:eq-as-1} &\leq \bP \Big(  \| \phi \|_{C^\alpha([-R_L,R_L])} > L^\eta \Big)
+ \bP \Big(  \| \phi_L \|_{C^\alpha([-R_L,R_L])} > L^{\eta} \Big) \\ 
&\leq 2 C \exp\Big( - c \big( C^{-2} L^{2\eta} - \log(L^\eta) \big) \Big) \leq \tfrac{1}{2} C^\prime e^{-c^\prime L^\eta}.
\end{aligned}
\end{equation}
By combining \eqref{sk:eq-as-1}-\eqref{sk:eq-as-4}, we then obtain the desired estimate \eqref{sk:eq-final-estimate}.
\end{proof}
\end{appendix}

\bibliography{BS_Library}
\bibliographystyle{myalpha}

\end{document}